\newcommand{\ep}{\varepsilon}
\newcommand{\fg}{\mathcal{F}_\gamma}
\newcommand{\dfg}{\delta\mathcal{F}_\gamma}
\newcommand{\df}{\delta\mathcal{F}}
\newcommand{\I}{\mathcal{I}}
\newcommand{\la}{\langle}
\newcommand{\ra}{\rangle}
\newcommand{\ld}{\lim_{\ep\downarrow 0}}
\newcommand{\FW}{0.5\textwidth}
\newcommand{\TRA}{0}%{2}
\newcommand{\TRB}{0}
\newcommand{\TRC}{0}%{2}
\newcommand{\TRD}{0}
\def\@footnotecolor{red}
\def\@footnotemark{%
    \leavevmode
    \ifhmode\edef\@x@sf{\the\spacefactor}\nobreak\fi
    \stepcounter{Hfootnote}%
    \global\let\Hy@saved@currentHref\@currentHref
    \hyper@makecurrent{Hfootnote}%
    \global\let\Hy@footnote@currentHref\@currentHref
    \global\let\@currentHref\Hy@saved@currentHref
    \hyper@linkstart{footnote}{\Hy@footnote@currentHref}%
    \@makefnmark
    \hyper@linkend
    \ifhmode\spacefactor\@x@sf\fi
    \relax
  }%
\newtheorem{theorem}{Theorem}[section]
\newtheorem{lemma}[theorem]{Lemma}
\newtheorem{corollary}[theorem]{Corollary}
\newtheorem{proposition}[theorem]{Proposition}
\def\BState{\State\hskip-\ALG@thistlm}
\def\blfootnote{\gdef\@thefnmark{}\@footnotetext}
\begin{document}
\title{Sharp detection in PCA under correlations: all eigenvalues matter}
%\title{Detecting principal components in a background of correlations: all eigenvalues matter}

\author{Edgar Dobriban\footnote{E-mail: \texttt{dobriban@stanford.edu}. Supported in part by NSF grants DMS-1418362 and DMS-1407813, and by an HHMI International Student Research Fellowship.}}
\date{Stanford University}

\maketitle

\begin{abstract}
Principal component analysis (PCA) is a widely used method for dimension reduction.  In high dimensional data, the ``signal'' eigenvalues corresponding to weak principal components (PCs) do not necessarily separate from the bulk of the ``noise'' eigenvalues. Therefore, popular tests based on the largest eigenvalue have little power to detect weak PCs. In the special case of the spiked model, certain tests asymptotically equivalent to linear spectral statistics (LSS)---averaging effects over \emph{all} eigenvalues---were recently shown to achieve some power.  

We consider a nonparametric, non-Gaussian generalization of the spiked model to the setting of Marchenko and Pastur (1967). This allows a general bulk of the noise eigenvalues for flexible data modelling, accomodating correlated variables even under the null hypothesis of no significant PCs. 

We develop new tests based on LSS to detect weak PCs in this model. We show using the CLT for LSS that the \emph{optimal LSS} satisfy a Fredholm integral equation of the first kind. We develop algorithms to solve it, building on our recent method for computing the limit empirical spectrum. In contrast to the standard spiked model, we find that under ``widely spread'' null eigenvalue distributions, the new tests have a lot of power.
\end{abstract}

%\newpage
%\tableofcontents
%\newpage

\section{Introduction}
\label{sec:intro}

%\subsection{Motivation}
Introduced by Pearson and Hotelling in the early 1900's, Principal Component Analysis (PCA) is a widely used statistical method for dimension reduction.  Inference in PCA is classically based on the asymptotic distribution of the top sample eigenvalues of the covariance matrix, which are consistent estimators of the top population eigenvalues under low-dimensional asymptotics---i.e., when the sample size grows while the dimension is fixed \citep{anderson1963asymptotic,anderson1958introduction}.

In contrast, in high dimensions---when the dimension is proportional to the sample size---the behavior of the eigenvalues is different. Below a critical value of the top eigenvalue in the population, the top sample eigenvalue has the same behavior as if there were only null eigenvalues, see e.g., \cite{baik2005phase,benaych2011eigenvalues} for results in this direction, and \cite{hachem2015survey} for a survey. In particular, the top eigenvalue does not separate from the bulk of the noise eigenvalues. Tests based on the top eigenvalue alone---despite their optimality in low dimensions---have small power to detect weak PCs in high dimensions. 

This raises several broad questions. Can we detect weak PCs in high-dimensional data even when the optimal low-dimensional tests fail? What statistical models are helpful to understand the problem? Can we find the optimal tests, perhaps restricted to certain classes? Can we characterize their performance?

To gain a deeper understanding of the problem, it is helpful to leverage results from random matrix theory, where the eigenvalues of large sample covariance matrices have been studied for nearly 50 years \citep{marchenko1967distribution}. There has been a lot of work on general nonparametric ensembles, where the unobserved population covariance matrix can be nearly arbitrary \citep[see e.g.,][for a reference]{bai2009spectral}. 

Despite this work, our current methods for detecting weak PCs are limited to a small number of covariance matrix models solved explicitly. These all center on the special case of the ``spiked model'', where the covariance matrix is a low rank perturbation of the identity \citep{johnstone2001distribution}.
For instance, \cite{onatski2013asymptotic,onatski2014signal} recently showed that in Gaussian spiked models, likelihood ratio tests have some power. 

Is it possible to detect weak PCs under the general covariance matrix models of \cite{marchenko1967distribution}? If so, what are the suitable methods, and what is their performance? This question is relevant for many applications, where the spiked model is not always a good description of empirical data (see Section  \ref{empirical_evidence} for a short review). The new methods are practically relevant, because tests assuming identity covariance---or ``sphericity''---may lose type I error control and lead to false discoveries in general models.

Working with the nonparametric Marchenko-Pastur models, however, poses several challenges. First, these models are characterized only implicitly by certain difficult fixed-point equations. While the theoretical existence of these equations---and of the associated ensembles---has been known for a long time, a reliable numerical approach has only recently been developed \citep{dobriban2015efficient}. This has enabled us to compute eigenvalue densities for examples never done before.  We will use here the same computational framework.

A second key challenge is that the pre-existing theoretical approach does not generalize directly. \cite{onatski2013asymptotic,onatski2014signal} work with the likelihoods of the eigenvalues in Gaussian spiked models---but in our non-Gaussian case these likelihoods do not exist. Even in the Gaussian case, the eigenvalue densities for general covariance matrices are much harder to work with than in the identity case \cite[e.g.,][]{muirhead2009aspects}. Therefore, a new theoretical approach is needed. 

In this paper we show how to detect weak PCs in certain nonparametric spiked models that generalize the standard one to the setting of  \cite{marchenko1967distribution}. We overcome the computational challenges by using the recently developed method and framework of \cite{dobriban2015efficient}. We overcome the theoretical challenges by directly working with a broad class of trace-like functionals of the covariance matrix, linear spectral statistics. Gaussian LR tests are a special case.

As a consequence of our results, quite generally \emph{all eigenvalues matter} to achieve sharp detection of weak PCs in high-dimensional data. We will see that tests based on top eigenvalue have little power, while our novel tests can have substantial power, especially when the null distribution of eigenvalues is ``widely spread''. This finding is in contrast to the low-dimensional case discussed above, as well as to the high-dimensional case with \emph{strong} PCs. In the latter, the top eigenvalues are not consistent estimates of their population counterparts, but they separate from the noise eigenvalues, and so can be detected with full power \citep[e.g.,][etc]{baik2005phase,paul2007asymptotics}. Thus, our results identify a special but broad regime where optimal inference must be based on all eigenvalues. 

\subsection{Our contributions}

To describe our results more concretely, suppose we have an $n \times p$ data matrix $X_{n\times p}$, with $n$ rows sampled from a $p$-dimensional population. The samples are allowed to have a general covariance structure, and have the distribution $\smash{X_i = \Sigma_p^{1/2}\ep_i}$ for white noise $\ep_i$ with iid real standardized entries. In the special case of the spiked model  \citep{johnstone2001distribution}, the null hypothesis is  that the  covariance matrix is spherical, $\Sigma_p = I_p$. This is a model for isotropic data varying equally in each spatial direction. The alternative hypothesis of interest in this case is that $\smash{\Sigma_p = I_p+ \sum_{j=1}^{k} h_j v_j v_j^{\top}}$, for orthonormal directions $v_j$ and scalars $h_j$. This allows for a greater variability in the directions of $v_j$. The problem is to test if there are any directions of variation with $h_j>0$.

We will study these questions under high-dimensional asymptotics, taking $n,p\to \infty$ such that $p/n\to\gamma>0$. In the standard spiked model, the top eigenvalue $\lambda_1$ of the sample covariance matrix $\smash{\hSigma = n^{-1} X_{n\times p}^\top X_{n\times p}}$ undergoes a phase transition. If $h_1>\sqrt{\gamma}$, $\lambda_1$ is asymptotically separated from the bulk of the noise eigenvalues---i.e., the other eigenvalues of $\smash{\hSigma}$---and detection is possible with full power. However, if $0\le h_1<\sqrt{\gamma}$, the top eigenvalue does not separate from the bulk \citep[e.g.,][etc]{baik2005phase,baik2006eigenvalues,paul2007asymptotics}. Therefore, tests based on it have trivial power.

\cite{onatski2013asymptotic,onatski2014signal} have recently discovered that despite the non-separation, weak PCs can be detected with nontrivial power by suitable likelihood ratio (LR) tests. One of their key observations is that the LR tests in Gaussian models are asymptotically equivalent to certain specific \emph{linear spectral statistics} or LSS. More generally, LSS are defined for all suitably smooth functions $\varphi$ as  $\smash{\tr(\varphi(\hSigma))}$$ = \sum_i \varphi(\lambda_i)$, where $\smash{\hSigma}$ is the sample covariance matrix and $\lambda_i$ are its eigenvalues. Notably, LSS aggregate effects over \emph{all} eigenvalues, unlike top eigenvalue based tests. 

Given this background, we can now state our contributions. 

\begin{enumerate}
\item We consider a hypothesis testing formulation for PCA in a  \emph{nonparametric} spiked model. This is a natural generalization of the standard spiked model of \cite{johnstone2001distribution} to the setting of \cite{marchenko1967distribution}. Our model allows for general distributions of PC variances---equivalently, of eigenvalues---under the null and alternative. In particular, the measured variables can be correlated even under the null. We model the distribution $H_p$ of eigenvalues as a mixture $(1-hp^{-1})H+h\,p^{-1}G_0$ of null eigenvalues $H$ and spikes $G_0$. The problem is to test for the presence of spikes. 

Motivated by the optimality of LSS in the standard Gaussian spiked model, we \emph{directly optimize} over LSS using the seminal CLT of \cite{bai2004clt}. This bypasses the difficulty that the density of eigenvalues is not available. 
We give an integral equation for the optimal LSS (Theorem \ref{thm:lss_part2}), and describe the maximum power (Theorem \ref{pow_lss}).  We show that the power is unity precisely if the equation is \emph{not} solvable. 

We show in simulations that there is a large power for spikes below the phase transition when the null $H$ is widely ``spread out'' (Sec. \ref{numer_res}). This is in contrast to the standard spiked model, where the power below the phase transition is small \citep{onatski2013asymptotic,onatski2014signal}. The larger power in our case is encouraging.

\item As an innovation in the proofs, we find the weak derivative of the Marchenko-Pastur forward map of the eigenvalues (Theorem \ref{weak_der}). This key new object allows us to compare the \emph{difference} in the distribution of the LSS under the null and alternative. 

The weak derivative proves to be a fruitful object of study, with interesting statistical consequences. By studying its structure---i.e., density and point masses---in Proposition \ref{prop_df}, we conclude that the asymptotic power of the optimal LSS is unity for spikes above the \emph{known} phase transition in existing spiked models \citep{baik2005phase,benaych2011eigenvalues,bai2012sample} (Theorem \ref{full_pow}). %In particular, in the standard spiked model, LSS have asymptotically full power for spikes $h_1>\sqrt{\gamma}$. 
Finally, we also explain how the weak derivative sheds new light on the phase transition phenomenon. 

\item  We extend the whole framework to allow for an unknown scale factor of the PC variances. This development mirrors the extension from tests of identity---$\Sigma=I_p$---to tests of sphericity---$\Sigma=\sigma^2I_p$ for some unknown $\sigma^2$ in classical multivariate statistics \citep[e.g.][Ch. 10]{anderson1958introduction}. It allows flexibility, as only the general ``shape'' of the null must be specified, and not the scale.  %The same type of extension is very standard in multivariate analysis \citep[e.g.][Ch. 10]{anderson1958introduction}, and we believe these results belong in one place. 

To allow for the unknown scale factor, we introduce and study the scale-invariant linear \emph{standardized} spectral statistics $\smash{\tr(\varphi(\hSigma/\hsigma^2))}$ = $\smash{\sum_{i=1}^{p} \varphi(\lambda_i/\hsigma^2)}$, where $\hsigma^2  = p^{-1}\tr\hSigma$. After establishing a CLT for them, we obtain results parallel to those for LSS. The results have some interesting consequences---for instance the classical LRT for sphericity behaves like the one for identity, despite their seemingly  different form. 

\item In addition to finding the optimal tests among LSS, we take a broader perspective that underscores their ubiquity in multivariate analysis.  We study both classical and new tests of sphericity---LR tests, the popular tests of  \cite{john1971optimal, ledoit2002some} and the new tests of \cite{fisher2010new,choi2015regularized}---and show that they are all asymptotically equivalent to certain LSS in our nonparametric models. 

While tests of sphericity were not classically developed for PCA, our analysis shows that they do in fact have some power to detect PCs in high-dimensional spiked models. More broadly, these results complement our main optimality theorems, arguing that LSS are a helpful unifying notion in multivariate analysis in high dimensions.  %We hypothesize that they may also have certain robustness advantages compared to the optimal LSS. 

\item  We develop an efficient algorithm for our method (Sec. \ref{Implementation}), based on the computational framework of \cite{dobriban2015efficient}, and on methods for solving linear integral equations.
Software implementing our methods and for reproducing our computational results is available at \url{github.com/dobriban}.  We also give some empirical motivation by reviewing literature from genomics and finance, and by an empirical data example (Sec. \ref{empirical_evidence}). 

\end{enumerate}

\subsection{Related work}
In addition to the already mentioned work, there are many interesting results on PCA in high dimensions. For general reviews on this and related topics in random matrix theory, we refer to \cite{johnstone2007high,couillet2011random, paul2014random,yao2015large}. There are at least two broad lines of work on testing in high-dimensional PCA connected to our results. 
The first links to tests of sphericity against low-rank alternatives and to spiked models \citep{johnstone2001distribution,onatski2013asymptotic,onatski2014signal, wang2013sphericity, wang2014note,choi2015regularized,dharmawansa2014local,johnstone2015testing}. The second generally studies strong PCs, allowing for correlated residuals \citep[e.g.,][]{bai2002determining,bai2008large,onatski2009testing,ahn2013eigenvalue}. These and other results are reviewed in Sections \ref{rel_work} and \ref{lss_examples}  after  our main results. 

\section{Sharp detection in PCA}
\label{olss}

We now set the stage to present our results. Suppose we observe an $n \times p$ data matrix $X_{n \times p}$, where $n$ is the sample size and $p$ is the dimensionality. If the samples are drawn independently from a population with covariance matrix $\Sigma_{p}$, then one can model $\smash{X_{n \times p} = Z_{n \times p} \Sigma_{p}^{1/2}}$, where the $n \times p$ matrix $Z_{n\times p}$ has iid standardized entries, and $\Sigma_{p}$ is a $p \times p$ deterministic positive semi-definite population covariance matrix. Let $H_p$ be the spectral distribution of $\Sigma_{p}$, i.e., the discrete uniform distribution on its eigenvalues $l_i$, sorted so that $l_1\ge l_2 \ge \ldots \ge l_p$.  Its cumulative distribution function is defined as $\smash{H_p(x) = p^{-1}\sum_{i=1}^{p}I(l_i \le x)}$. In our context, $l_i$ are the population variances of the principal components.

 The null hypothesis of sphericity $\Sigma_p = \sigma^2 I_p$ is equivalent to $H_p = \delta_{\sigma^2}$, for an unknown $\sigma^2>0$, where $\delta_c$ is the point mass at $c$. The alternative hypothesis in the spiked model $\smash{\Sigma_p = \sigma^2 I_p +}$ $\smash{ \sum_{j=1}^{k} h_j v_j v_j^{\top}}$, for orthonormal $v_j$, is equivalent to $H_p = (1-k/p)\delta_{\sigma^2}+p^{-1}\sum_{j=1}^{k} \delta_{\sigma^2+h_j}$. This expresses the sphericity null and spiked alternative in terms of the spectral distribution of $\Sigma_p$. The test of identity $H_0: \Sigma_p = I_p$ against low rank alternatives can handled similarly. 

We consider a more general \emph{nonparametric spiked model}. Let $\smash{H = d^{-1}\sum_{i=1}^{d}\delta_{t_i}}$ and $\smash{G_j =}$ $\smash{ h^{-1}\sum_{i=1}^{h}\delta_{s_i^j}}$, $j=0,1$ be fixed probability distributions on $[0,\infty)$. Under the null, we take the eigenvalues to be $t_1,t_2,\ldots,t_d$ each with multiplicity $m$, and $s_1^0, s_2^0,\ldots,s_h^0$. Under the alternative, the eigenvalues are $t_i$ with the same multiplicity, and $s_1^1, s_2^1,\ldots,s_h^1$. Therefore, the total number of eigenvalues is $p = dm + h$, and $h$ of them differ between the null and alternative. Without loss of generality, we can take $p\to\infty$ along such a subsequence (as $m\to\infty$).
 
We can write this sequence of null hypotheses $H_{p,0}$ and alternatives $H_{p,1}$ as
\begin{align}
\label{null}
H_{p,0}:&\, H_p = (1-hp^{-1})H+h\,p^{-1}G_0,\\
\label{altve}
H_{p,1}:&\, H_p = (1-hp^{-1})H+h\,p^{-1}G_1.
\end{align}

%Therefore $H$ describes most of the eigenvalues under both the null and the alternative, and a finite number of them differ. 
Taking $H=\delta_1$, $G_0=\delta_1$, and $G_1 =h^{-1}\sum_{j=1}^{h} \delta_{1+h_j}$, the above null generalizes the hypothesis of identity $H_p = \delta_1$ against spiked alternatives. We will first focus on the identity test, and then extend the whole methodology to testing sphericity in Section \ref{sphericity}. Similar---but different---spiked models have appeared in \cite{nadler2008finite,benaych2011eigenvalues,bai2012sample}.

An analogy to classical asymptotic statistics helps explain the scaling of the problem. In classical statistics, fixed-dimensional distributions $P_\theta$ are tested against sequences $\smash{P_{\theta + hN^{-1/2}}}$ based on $N$ iid observations \citep{van1998asymptotic, lehmann2005testing}. These local alternatives are scaled at the $\sqrt{N}$-level. In our setting the dimension $p$ will grow proportionally to $n$, creating $N=np$ effective sources of randomness. Therefore, heuristically the right rate for local alternatives is $\sqrt{np}\sim p$. Furthermore, building on this analogy, we will call $h$ the \emph{local parameter}.

While in some cases a null hypothesis for the eigenvalues may be known from prior work, in many cases the null is not known, and must be estimated. The solution for known $H_p$ is an important step toward the setting of unknown $H_p$. We will discuss this in Section \ref{unknown_null}. 

We will construct tests based on linear spectral statistics (LSS) $\smash{T_p(\varphi) = \tr(\varphi(\hSigma))}$ = $\smash{\sum_{i=1}^{p} \varphi(\lambda_i)}$, where $\smash{\hSigma = n^{-1} X_{n \times p}^\top X_{n \times p}}$ is the sample covariance matrix, and $\lambda_i$ are its eigenvalues. We will find the \emph{optimal LSS} for the hypothesis testing problem \eqref{null} vs \eqref{altve}, when the sample size $n$ and dimension $p$ grow such that $\gamma_p = p/n \to \gamma >0$. 
In fact we will assume that $\gamma_p=\gamma$, which imposes the extra condition that  $\gamma$ must be rational and $n = (dm+h)/\gamma$ must belong to the integers for infinitely many $m\in \mathbb{N}$. However, this is not a limitation, because in practice we always have \emph{finite} $n,p$, and we can set $\gamma:=p/n$ to use our methods.

In this model, the Marchenko-Pastur \emph{forward map}---or simply \emph{Marchenko-Pastur map}---describes the  spectral distribution $F_p$ of $\smash{\hSigma}$. If the entries of $Z_{n\times p}$ come from an infinite array  of iif variables with mean zero and variance 1, and $H_p \Rightarrow H$ weakly, then with probability 1, $F_p \Rightarrow \fg(H)$ for a probability measure $ \fg(H)$ \citep{marchenko1967distribution,bai2009spectral}. We will assume $H\neq\delta_0$. 
An example of this model is the autoregressive covariance matrix of order 1, where the entries of $\Sigma_p$ are $\Sigma_{p}[i,j] = \rho^{|i-j|}$, $\rho \in (0,1)$; for other examples, see for instance \cite{dobriban2015high}.

The Marchenko-Pastur map $\fg$ has a smoothing effect: for any $H$, $ \fg(H)$ has a continuous density for all $x\neq 0$, and also for $x=0$ if $\gamma<1$ \citep{silverstein1995analysis}. If $\gamma>1$, the so-called companion empirical spectral distribution (ESD) $\smash{\underline{F}}$, defined by $\smash{\underline{F} = \gamma \fg(H) + (1-\gamma) I_{[0,  \infty)}}$ has a density at zero; we will find it convenient to work with this distribution. The companion ESD is the limit of the spectral distribution of the matrix $\smash{\underline \hSigma = n^{-1} X_{n \times p} X_{n \times p}^\top}$. 

The asymptotic distribution of the LSS is also known for smooth functions.  Let $\I = [a,b]$ be a compact interval whose interior includes $[\liminf l_p(\Sigma_p) I(\gamma\in(0,1)) (1-\sqrt{\gamma})^2, \limsup l_1(\Sigma_p) (1+\sqrt{\gamma})^2]$ for both null and alternative $\Sigma_p$ sequences, where we assume $l_1(\Sigma_p)$ is uniformly bounded above. This interval includes the support of the limiting ESD $\mathcal F_\gamma(H)$ \citep{bai2009spectral}.
Let $\mathcal{H}(\I)$ be the set of complex analytic functions on some open domain of $\mathbb{C}$ containing $\I$, and let $f \in \mathcal{H}(\I)$. Suppose that the iid real standardized random variables $Z_{n\times p}[i,j]=Z[i,j]$ come from an infinite array, with $\EE{Z[i,j]^4}=3$. 

The CLT for linear spectral statistics of \cite{bai2004clt} implies that the centered test statistics converge weakly: $\smash{T_p(\varphi) - p\int \varphi(x) d \mathcal{F}_{\gamma_p}(H_p)}$ $\smash{ \Rightarrow \mathcal{N}(m_\varphi,\sigma_{\varphi}^2)}$ under the null and alternative, for a certain mean $m_\varphi$ and variance $\sigma_{\varphi}^2$. The limit parameters depend on $H$, $G_i$ and $\gamma$. We focus on variables whose fourth moment matches the Gaussian distribution, but a similar approach should work for more generally, using the CLT of \cite{zheng2015substitution}.

Recall that the Stieltjes transform of a signed measure $\mu$ on $[0,\infty)$ is defined as the map $m: \mathbb{C}\setminus[0,\infty) \to \mathbb{C}$, $m(z) = \int (x-z)^{-1} d\mu(x)$. Let $v(z) = v_\gamma(z;H)$ be the Stieltjes transform of the companion ESD $\underline F$. The limit $v(x) = \lim_{z\to x} v(z)$ exists for all $x \in \mathbb{R}\setminus\{0\}$ \citep{silverstein1995analysis}. 
We will also need the kernel (well-defined a.s. with respect to Lebesgue measure on $\mathbb{R}$)
\begin{equation}
\label{kernel}
k(x,y) = k_\gamma(x,y;H)  =  \frac{1}{2\pi^2}\log\left( 1 + 4\frac{\Im (v(x)) \, \Im (v(y))}{|v(x) - v(y)|^2}\right). 
\end{equation}

Note that $k\neq0$ only within the support of $\fg(H)$. Since $k$ is a logarithmically weakly singular kernel \citep[][p. 564]{bai2004clt}, it induces a compact linear integral operator $K = K_{\gamma,H}$ as a map $K: L^2[\mathcal I] \to L^2[\mathcal I]$ in the usual way:  $K(\varphi)(x) = \int_{\I} k(x,y) \varphi(y) dy$ \citep[see][p. 29 and 62, for this property]{kress2013linear}. We write $\text{Im}(K) = \{Kl: l\in L^2(\I)\}$ for the image of the linear operator $K$, and $\la \cdot,\cdot \ra $ for the inner product on $L^2[\mathcal I]$. The generalized inverse $K^+$ of $K$ is the linear operator which assigns to each $\Delta \in \textnormal{Im}(K)$ the minimum norm solution to the equation $Kl =\Delta$ \citep[see e.g.,][p. 115]{groetsch1977generalized}.

\subsection{Main results}
\label{main_res}

In the above model, the optimal LSS depends on the \emph{weak derivative} $\dfg$ of the Marchenko-Pastur map. For two probability measures $H,G$ we define this as the signed measure arising in the weak limit 
\begin{equation}
\label{dfg}
\dfg(H,G) = \lim_{\ep\to 0} \frac{\fg((1-\ep)H+\ep G)-\fg(H)}{\ep}
\end{equation}
We will show in Theorem \ref{weak_der} that the limit is well defined. %It encodes a great deal of information about the testing problem. %For a signed measure $\mu$ we also denote its distribution function $\mu(x) = \mu((-\infty,x])$ by $\mu$. 
To find the optimal LSS we will first give an asymptotically equivalent normal test for \emph{fixed} LSS. 

\begin{theorem}[Asymptotically Equivalent Normal Test]
\label{thm:lss}
Consider the problem of testing for weak PCs in the nonparametric spiked model \eqref{null} vs \eqref{altve}. For each $\varphi\in \mathcal{H}(\I)$, there is a sequence of constants $c_p$ such that under the null $H_{p,0}$, one has $T_p(\varphi) -c_p \Rightarrow \mathcal{N}(0,\sigma_{\varphi}^2)$, while under the alternative $H_{p,1}$, one has $T_p(\varphi) -c_p \Rightarrow \mathcal{N}(\mu_{\varphi},\sigma_{\varphi}^2)$.

The mean and variance are 
\begin{align}
\label{mu_f}
\mu_{\varphi} & =  - h\int_{\I} \varphi'(x) \Delta(x) dx \,\,\textnormal{ and } \\
\label{sigma_f}
\sigma^2_{\varphi} & =\int_{\I}\int_{\I} \varphi'(x)\varphi'(y) k(x,y) dx\, dy.
\end{align}

Here $\Delta$ denotes the difference between the distribution functions of the weak derivatives $\dfg(H,G_i)$, and $k$ denotes the kernel defined in \eqref{kernel}.
\end{theorem}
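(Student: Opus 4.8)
The plan is to apply the CLT for linear spectral statistics of \cite{bai2004clt} separately under the null $H_{p,0}$ and under the alternative $H_{p,1}$, and then to account for the (deterministic) gap between the two centerings using the weak derivative of the Marchenko--Pastur map.

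First I would invoke the Bai--Silverstein CLT, which under the stated moment and support hypotheses gives, with $\gamma_p=\gamma$ fixed,
\[
T_p(\varphi) - p\int \varphi\, d\fg(H_{p,j}) \;\Rightarrow\; \N(m_\varphi,\sigma_\varphi^2),\qquad j=0,1,
\]
where the limiting mean and variance depend on the sequences of population spectra only through their common weak limit $H$ (and through $\gamma$); since $h\,p^{-1}\to 0$ we have $H_{p,0}\Rightarrow H$ and $H_{p,1}\Rightarrow H$, so the pair $(m_\varphi,\sigma_\varphi^2)$ is the same under the two hypotheses. Setting $c_p := p\int\varphi\, d\fg(H_{p,0}) + m_\varphi$ then yields $T_p(\varphi)-c_p\Rightarrow \N(0,\sigma_\varphi^2)$ under $H_{p,0}$, while under $H_{p,1}$ the only difference is the extra deterministic term
\[
\mu_\varphi \;=\; \lim_{p\to\infty}\, p\left[\int\varphi\, d\fg(H_{p,1}) - \int\varphi\, d\fg(H_{p,0})\right],
\]
so that $T_p(\varphi)-c_p\Rightarrow \N(\mu_\varphi,\sigma_\varphi^2)$ under the alternative, provided this limit exists and equals \eqref{mu_f}.

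To evaluate $\mu_\varphi$ I would write $H_{p,j} = (1-\ep_p)H + \ep_p G_j$ with $\ep_p = h/p$ and bring in the weak derivative. By Theorem \ref{weak_der}, $\ep_p^{-1}[\fg((1-\ep_p)H+\ep_p G_j)-\fg(H)] \Rightarrow \dfg(H,G_j)$; testing this weak convergence against $\varphi$ --- extended to a bounded continuous function on $\mathbb{R}$, which changes nothing since all the measures involved are supported in the interior of the compact interval $\I$ --- and using $p\,\ep_p = h$ gives $\mu_\varphi = h\int_\I \varphi\, d\big(\dfg(H,G_1)-\dfg(H,G_0)\big)$. Each $\dfg(H,G_j)$ is a weak limit of quotients of probability measures, hence has total mass zero; therefore the signed measure $\nu := \dfg(H,G_1)-\dfg(H,G_0)$ has total mass zero, and its distribution function $\Delta$ vanishes at both endpoints of $\I$ (at the left endpoint because the support lies in the interior, at the right endpoint because the total mass is zero). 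Integrating by parts, $\int_\I \varphi\, d\nu = -\int_\I \varphi'(x)\Delta(x)\,dx$, which is exactly \eqref{mu_f}.

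It remains to identify the Bai--Silverstein limiting variance with the double integral \eqref{sigma_f}: this is the standard rewriting of the contour-integral covariance of \cite{bai2004clt} in terms of the companion Stieltjes transform $v$ --- deforming the two contours onto the real line, integrating by parts to transfer the derivatives from $v$ onto $\varphi$, and taking real parts, which produces the logarithmically singular kernel $k$ of \eqref{kernel} --- and I would either carry this manipulation out or cite it. I expect the main obstacle to be the passage through the weak derivative: one must check that Theorem \ref{weak_der} indeed delivers weak convergence of the difference quotients along the particular sequence $\ep_p=h/p$, and that the supports of $\fg(H_{p,j})$ and of $\dfg(H,G_j)$ stay uniformly inside $\I$, so that integrating the merely locally analytic function $\varphi$ against these (signed) measures and the ensuing integration by parts are legitimate; the remaining steps are routine given the Bai--Silverstein CLT.
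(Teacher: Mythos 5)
Your proposal is correct and follows essentially the same route as the paper: apply the Bai--Silverstein CLT under both hypotheses (noting the limit parameters depend only on the common weak limit $H$), identify the mean shift as the limit of the difference of centering terms, evaluate it via the weak derivative of Theorem \ref{weak_der} with $\ep_p=h/p$, and integrate by parts using that the zero-total-mass signed measure has distribution function vanishing at both endpoints of $\I$. The only cosmetic difference is in the variance identification, where the paper simply cites Eq.\ (1.17) of \cite{bai2004clt} as already being in the form \eqref{sigma_f}, so no contour deformation needs to be carried out.
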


The proofs of the results in this section are outlined in Section \ref{mainproofs}. Therefore, using the linear spectral statistic $T_p(\varphi)$ is asymptotically equivalent to a hypothesis test of a distribution $\mathcal{N}(0,\sigma_{\varphi}^2)$ against $\mathcal{N}(\mu_{\varphi},\sigma_{\varphi}^2)$. The next step is to optimize over LSS $\varphi$. In analogy to the asymptotic theory of optimal testing in iid models, we will call $\theta(\varphi)=\mu_{\varphi}/\sigma_{\varphi}$ the \emph{efficacy} of a test sequence $T_p(\varphi)$ \citep[][p. 536]{lehmann2005testing}. If $\sigma_{\varphi}=0$ while $\mu_{\varphi}\neq0$, we define $\theta(\varphi)=+\infty$, because the efficacy in distinguishing $\mathcal{N}(0,\sigma_{\varphi}^2)$ from $\mathcal{N}(\mu_{\varphi},\sigma_{\varphi}^2)$ is infinite. Similarly, if $\sigma_{\varphi}=0$ while $\mu_{\varphi}=0$, define $\theta(\varphi)=0$. With these definitions, one does not have to worry about dividing by 0. 

 We will maximize the efficacy over certain function classes $\mathcal{X}$: 
\begin{equation}
\label{opt_lss_x}
\sup_{\varphi\in\mathcal{X}} \frac{\mu_{\varphi}}{\sigma_{\varphi}}.
\end{equation}
The value of the optimization problem will be called the \emph{efficacy over $\mathcal{X}$}, and will be denoted $\theta^*(\mathcal{X})$. A function $\varphi\in\mathcal{X}$ achieving this value will be called an \emph{optimal LSS} over $\mathcal{X}$.  Due to the quadratic nature of the the objective, it will be easier first to optimize over the space $\mathcal{W}(\I)=\{\varphi:\I\to\mathbb{R}$ $: \varphi'(x) \text{ exists}$ $ \text{for almost every } x\in\I; \text{ and } \varphi' \in L^2[\mathcal I]\}$, using Hilbert space techniques.  

\begin{theorem}[Optimal Linear Spectral Statistics over $\mathcal{W}(\I)$]
\label{thm:lss_part2}
Consider the optimization of the efficacy over $\mathcal{W}(\I)$. The following dichotomy arises:

\begin{enumerate}
\item If $\Delta\in\textnormal{Im}(K)$, then the efficacy over $\mathcal{W}(\I)$ equals $h\cdot\la \Delta,K^{+}\Delta \ra^{1/2}$ $ < \infty$. The optimal linear spectral statistics over $\mathcal{W}(\I)$ are given by a Fredholm integral equation of the first kind for their derivatives: 
\begin{equation}
\label{opt_lss}
K(\varphi') = -\eta \Delta,
\end{equation}
where $\eta>0$ is any constant. 

\item On the other hand, if $\Delta \notin \textnormal{Im}(K)$, then the efficacy over $\mathcal{W}(\I)$ equals $+\infty$. The optimal LSS are all functions $\varphi\in\mathcal{W}(\I)$ with $K(\varphi')=0$ and $\la \Delta, \varphi' \ra<0$.
\end{enumerate}

\end{theorem}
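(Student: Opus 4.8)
The plan is to recognize the problem as a generalized Rayleigh-quotient optimization in the Hilbert space $L^2[\I]$ and to solve it with a Cauchy--Schwarz argument adapted to the possibly degenerate quadratic form $g\mapsto\la g,Kg\ra$. First I would note that the derivative map $\varphi\mapsto\varphi'$ sends $\mathcal{W}(\I)$ onto $L^2[\I]$, and that the efficacy $\theta(\varphi)=\mu_\varphi/\sigma_\varphi$ factors through it: by Theorem~\ref{thm:lss}, with $g:=\varphi'$ one has $\mu_\varphi=-h\la g,\Delta\ra$ and $\sigma_\varphi^2=\la g,Kg\ra$, where $K$ is the integral operator with kernel $k$ from \eqref{kernel}. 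This operator is compact (as noted after \eqref{kernel}), self-adjoint (since $k$ is symmetric), and positive semidefinite (equivalently, $\la g,Kg\ra=\sigma_\varphi^2\ge 0$ for $g$ in the dense subspace $\{\varphi':\varphi\in\mathcal H(\I)\}$, hence for all $g\in L^2[\I]$). So maximizing $\theta$ over $\mathcal{W}(\I)$ is the same as maximizing $R(g):=-h\la g,\Delta\ra/\la g,Kg\ra^{1/2}$ over $g\in L^2[\I]$, with the stated conventions when $\la g,Kg\ra=0$. I would then fix the spectral decomposition $K=\sum_j\lambda_j\la\cdot,e_j\ra e_j$ with $\lambda_j>0$, $\lambda_j\downarrow 0$, and $\{e_j\}$ an orthonormal system spanning $\overline{\operatorname{Im}(K)}$, so $\ker K=(\overline{\operatorname{Im}(K)})^\perp$, and write $g=g_\perp+\sum_j g_je_j$, $\Delta=\Delta_\perp+\sum_j d_je_j$ with $g_\perp,\Delta_\perp\in\ker K$ and $d_j=\la\Delta,e_j\ra$.

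For part 1 I would assume $\Delta=K\psi$ for some $\psi\in L^2[\I]$ (so $\Delta_\perp=0$). Then $\la g,\Delta\ra=\la g,K\psi\ra=\la g,\psi\ra_K$ for the semi-inner product $\la u,v\ra_K:=\la u,Kv\ra$, and Cauchy--Schwarz gives $|\la g,\Delta\ra|\le\la g,Kg\ra^{1/2}\la\psi,K\psi\ra^{1/2}$, hence $R(g)\le h\la\psi,K\psi\ra^{1/2}$. Since the quantity $\la\psi,K\psi\ra=\la\psi,\Delta\ra$ does not depend on which preimage $\psi$ is chosen (two preimages differ by an element of $\ker K$, which is $\Delta$-orthogonal), it equals its value at $\psi=K^+\Delta$, namely $\la\Delta,K^+\Delta\ra$; thus $R(g)\le h\la\Delta,K^+\Delta\ra^{1/2}<\infty$. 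Equality in Cauchy--Schwarz is equivalent to $g$ being $\|\cdot\|_K$-parallel to $\psi$, i.e. $Kg=c\Delta$, and the sign condition $\mu_\varphi>0$ forces $c=-\eta$ with $\eta>0$; such $g$ exists in $L^2[\I]$ precisely because $\Delta\in\operatorname{Im}(K)$, and substituting it back shows it attains the bound. Finally, any two solutions of $K\varphi'=-\eta\Delta$ differ by an element of $\ker K$ and therefore yield the same $\mu_\varphi$ and $\sigma_\varphi^2$, so the whole solution family \eqref{opt_lss} consists of optimal LSS.

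For part 2 I would assume $\Delta\notin\operatorname{Im}(K)$ and split into two cases. If $\Delta_\perp\neq 0$ (equivalently $\Delta\notin\overline{\operatorname{Im}(K)}$), take $\varphi'=-\Delta_\perp\in\ker K$: then $\sigma_\varphi^2=0$ while $\mu_\varphi=h\|\Delta_\perp\|^2>0$, so $\theta(\varphi)=+\infty$ by convention; conversely $\sigma_\varphi=0<\mu_\varphi$ forces $K\varphi'=0$ and $\la\Delta,\varphi'\ra<0$, so these are exactly the optimal LSS named in the statement (an empty family precisely when $\Delta\in\overline{\operatorname{Im}(K)}\setminus\operatorname{Im}(K)$). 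In the remaining case $\Delta\in\overline{\operatorname{Im}(K)}$ but $\sum_j d_j^2/\lambda_j^2=\infty$ (which is what $\Delta\notin\operatorname{Im}(K)$ amounts to here), I would use the truncated pseudo-inverses $g_N:=-\eta\sum_{j\le N}(d_j/\lambda_j)e_j\in L^2[\I]$, for which $Kg_N\to-\eta\Delta$ and $\theta(\varphi_N)=h\big(\sum_{j\le N}d_j^2/\lambda_j\big)^{1/2}$; the point is that this quantity diverges, so the supremum is $+\infty$ and no maximizer exists.

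I expect the last step of part 2 to be the main obstacle: $\sum_j d_j^2/\lambda_j^2=\infty$ does not by itself imply $\sum_j d_j^2/\lambda_j=\infty$, so one cannot conclude that the supremum is genuinely $+\infty$ by abstract operator theory alone — one must rule out the borderline possibility $\Delta\in\operatorname{Im}(K^{1/2})\setminus\operatorname{Im}(K)$. Here one would bring in the specific structure of $K$ --- $k$ is only logarithmically (hence mildly) singular, which pins down the decay rate of the eigenvalues $\lambda_j$ and the form domain $\operatorname{Im}(K^{1/2})$ --- together with the structure of $\Delta$ from Proposition~\ref{prop_df} (a density plus finitely many point masses), whose jump discontinuities keep it out of that form domain. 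An alternative, and probably cleaner, route is to state part 2 directly as non-solvability of the Fredholm equation \eqref{opt_lss}, i.e. $\Delta$ failing to lie in the range of $K$; this is the form actually exploited afterwards (Theorem~\ref{pow_lss}) to deduce that the asymptotic power equals one exactly when \eqref{opt_lss} has no solution. The remaining ingredients --- compactness, self-adjointness and positivity of $K$ inherited from $k$, the Cauchy--Schwarz computation, and the verification that $Kg=-\eta\Delta$ attains the bound --- are routine.
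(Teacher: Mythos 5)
Your proposal is correct in parts 1 and in the two ``generic'' sub-cases of part 2, and it follows essentially the same route as the paper: the paper also reduces to maximizing $-h\la g,\Delta\ra/\la g,Kg\ra^{1/2}$ over $g=\varphi'\in L^2(\I)$ and proves the dichotomy via Lemma \ref{ell2_opt}, using a Cauchy--Schwarz bound in the $K$-semi-inner product with the minimum-norm preimage $K^+\Delta$ when $\Delta\in\textnormal{Im}(K)$, a vector $g=\Delta-\Pr(\Delta)\in\ker K$ when $\Delta\notin\overline{\textnormal{Im}}(K)$, and truncated spectral sums $g^M_i=-l_i/k_i$ when $\Delta\in\overline{\textnormal{Im}}(K)\setminus\textnormal{Im}(K)$.

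The obstacle you single out in that last sub-case is real, and it is worth noting that the paper does not resolve it either: the proof of Lemma \ref{ell2_opt} simply asserts that ``since $\Delta\notin\textnormal{Im}(K)$, we must have $\sum_{i\in T}l_i^2/k_i=+\infty$,'' whereas $\Delta\notin\textnormal{Im}(K)$ only gives $\sum_{i\in T}l_i^2/k_i^2=+\infty$, which (as you observe) does not imply divergence of $\sum_{i\in T}l_i^2/k_i$ when $k_i\to0$. In the borderline case $\Delta\in\textnormal{Im}(K^{1/2})\setminus\textnormal{Im}(K)$, say $\Delta=K^{1/2}\psi$, the bound $-h\la g,\Delta\ra\le h\|K^{1/2}g\|\,\|\psi\|$ would cap the efficacy at $h\|\psi\|<\infty$, contradicting the claimed value $+\infty$; so the theorem as stated silently assumes this case cannot occur. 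Your suggested remedies---either invoking the specific structure of $k$ and of $\Delta$ (jump discontinuities from Proposition \ref{prop_df} versus the smoothing of the logarithmically singular kernel) to exclude $\Delta$ from $\textnormal{Im}(K^{1/2})$, or restating part 2 purely in terms of non-solvability of \eqref{opt_lss}---are sensible ways to close a gap that is present in the published argument, not one introduced by your proposal.
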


This gives an equation for the optimal LSS, which we call the  \emph{optimal LSS equation}. Since the equation does not depend on $h$, the optimal LSS is \emph{uniformly optimal} against all $h>0$.  If the equation is not solvable in $L^2(\I)$, we will construct a sequence of functions $\varphi_n \in \mathcal{W}(\I)$ with efficacies $\theta(\varphi_n)\to\infty$, concluding that the supremum of asymptotic power over $\mathcal{W}(\I)$ is unity. 

We now return to smooth LSS. While the solution of the optimal LSS may not be an analytic function, we will show that analytic functions in $\mathcal{H}(\I)$ have the same maximum power as functions in $\mathcal{W}(\I)$. Denoting the centered test statistics $\smash{\tilde T_p(\varphi) = T_p(\varphi)-}$ $\smash{p\int \varphi(x) d \mathcal{F}_{\gamma_p}(H_p)}-m_\varphi$, we consider two-sided testing procedures that reject $H_{p,0}$ if $\smash{\tilde T_p(\varphi) \notin [t^-_{\varphi},t^+_{\varphi}]}$ for some constants $t^-_{\varphi}<t^+_{\varphi}$. Our goal is to optimize over smooth functions $\varphi\in \mathcal{H}(\I)$ and the critical values $\smash{t^-_{\varphi}<t^+_{\varphi}}$. The maximal asymptotic power is defined as
$$
\beta = \sup_{\varphi\in \mathcal{H}(\I),\, t^-_{\varphi}<t^+_{\varphi}} \,\, \lim_{p\to\infty} \mathbb{P}_{H_{p,1}}\left(\tilde T_p(\varphi) \notin [t^-_{\varphi},t^+_{\varphi}]\right).
$$

We find an expression for the power, depending on the null, the spikes, and the local parameter.  
\begin{theorem}[Asymptotic power] 
\label{pow_lss}
%Consider the problem of detecting low-dimensional signals in the high-dimensional asymptotic framework \eqref{null}.
Among tests based on linear spectral statistics $T_p(\varphi)$ for $\varphi\in \mathcal{H}(\I)$ with asymptotic level $\alpha\in(0,1)$,
the maximal asymptotic power is 
$$
\beta=
\left\{
	\begin{array}{ll}
		\Phi\left(z_{\alpha}+h\, \la \Delta, K^{+}\Delta\ra^{1/2} \right)  & \mbox{\, if \, }  \Delta\in\textnormal{Im}(K), \\
		1 & \mbox{\, if \, }\Delta\notin\textnormal{Im}(K).
	\end{array}
\right.
$$
Here $\Delta=\dfg(H,G_1)-\dfg(H,G_0)$ is the difference of the weak derivatives,
while $K$ is the compact operator induced by the kernel \eqref{kernel}, and $K^+$ is the pseudoinverse of $K$.
\end{theorem}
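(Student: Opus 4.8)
I would treat one test function $\varphi$ at a time, reduce the fixed-$\varphi$ problem to a Gaussian location test via Theorem~\ref{thm:lss}, optimize over the critical values $t^-_\varphi<t^+_\varphi$, and finally take the supremum over $\varphi\in\mathcal H(\I)$, identifying it with the efficacy $\theta^*(\mathcal W(\I))$ of Theorem~\ref{thm:lss_part2}. For the first step, fix $\varphi\in\mathcal H(\I)$ and center as in Theorem~\ref{thm:lss}, so that $\tilde T_p(\varphi)\Rightarrow\mathcal N(0,\sigma_\varphi^2)$ under $H_{p,0}$ and $\tilde T_p(\varphi)\Rightarrow\mathcal N(\mu_\varphi,\sigma_\varphi^2)$ under $H_{p,1}$, with $\mu_\varphi,\sigma_\varphi^2$ as in \eqref{mu_f}--\eqref{sigma_f}. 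When $\sigma_\varphi>0$ the limit laws have densities, hence for any reals $t^-_\varphi<t^+_\varphi$ (automatically continuity points of both limits) the limiting rejection probabilities under null and alternative are $\Phi(t^-_\varphi/\sigma_\varphi)+1-\Phi(t^+_\varphi/\sigma_\varphi)$ and the same expression with $t^\pm_\varphi$ replaced by $t^\pm_\varphi-\mu_\varphi$. Thus the best level-$\alpha$ limiting power attainable from $\varphi$ is the supremum of $\Phi\bigl((t^--\mu_\varphi)/\sigma_\varphi\bigr)+1-\Phi\bigl((t^+-\mu_\varphi)/\sigma_\varphi\bigr)$ over $t^-<t^+$ subject to $\Phi(t^-/\sigma_\varphi)+1-\Phi(t^+/\sigma_\varphi)\le\alpha$.

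Next, I would solve this one-parameter problem. Writing $\theta=\mu_\varphi/\sigma_\varphi$ (and assuming $\theta\ge0$, replacing $\varphi$ by $-\varphi$ otherwise), put $a=\Phi(t^-/\sigma_\varphi)\in[0,\alpha]$, so $\alpha-a=1-\Phi(t^+/\sigma_\varphi)$ and the power is $\Phi(\Phi^{-1}(a)-\theta)+\Phi(\Phi^{-1}(\alpha-a)+\theta)$. Its $a$-derivative equals $e^{-\theta^2/2}\bigl(e^{\theta\Phi^{-1}(a)}-e^{-\theta\Phi^{-1}(\alpha-a)}\bigr)$, which has the sign of $\Phi^{-1}(a)+\Phi^{-1}(\alpha-a)$; this quantity tends to $-\infty$ at both ends of $(0,\alpha)$ and has its only stationary point at $a=\alpha/2$, where it equals $2\Phi^{-1}(\alpha/2)<0$ since $\alpha<1$, hence is negative throughout. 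So the power strictly decreases in $a$, and its supremum, approached as $a\downarrow0$ (i.e.\ $t^-\to-\infty$), is $\Phi(\Phi^{-1}(\alpha)+\theta)=\Phi(z_\alpha+\theta)$ with $z_\alpha=\Phi^{-1}(\alpha)$, the one-sided Neyman--Pearson value. If $\sigma_\varphi=0$ but $\mu_\varphi\neq0$, the two limiting point masses are separable, giving limiting power $1$, matching the convention $\theta(\varphi)=+\infty$. Hence the best level-$\alpha$ limiting power from $\varphi$ is $\Phi(z_\alpha+|\theta(\varphi)|)$, and by monotonicity and continuity of $\Phi$, $\beta=\Phi\bigl(z_\alpha+\sup_{\varphi\in\mathcal H(\I)}|\theta(\varphi)|\bigr)$.

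Finally, I would show $\sup_{\varphi\in\mathcal H(\I)}|\theta(\varphi)|=\theta^*(\mathcal W(\I))$. The inequality ``$\le$'' is clear since $\varphi\in\mathcal H(\I)$ forces $\varphi'$ analytic, hence bounded on the compact $\I$ and so in $L^2(\I)$, giving $\mathcal H(\I)\subseteq\mathcal W(\I)$. For ``$\ge$'' I would use that $\psi\mapsto-h\la\psi,\Delta\ra$ and $\psi\mapsto\la\psi,K\psi\ra$ are $L^2(\I)$-continuous (using $\Delta\in L^\infty(\I)\subset L^2(\I)$ and the boundedness of the compact operator $K$) together with the density of polynomials in $L^2(\I)$: given $\varphi\in\mathcal W(\I)$ with efficacy close to $\theta^*(\mathcal W(\I))$, pick polynomials $q_n\to\varphi'$ in $L^2(\I)$ and let $\varphi_n$ be a polynomial antiderivative of $q_n$, so $\varphi_n\in\mathcal H(\I)$, $\mu_{\varphi_n}\to\mu_\varphi$, $\sigma_{\varphi_n}^2\to\sigma_\varphi^2$. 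If $\Delta\in\textnormal{Im}(K)$, the minimum-norm optimizer $\varphi'=-\eta K^{+}\Delta$ has $\sigma_\varphi^2=\eta^2\la\Delta,K^{+}\Delta\ra>0$, so $\theta(\varphi_n)\to\theta(\varphi)$ and $\beta=\Phi\bigl(z_\alpha+h\la\Delta,K^{+}\Delta\ra^{1/2}\bigr)$. If $\Delta\notin\textnormal{Im}(K)$, take $\varphi'$ with $K\varphi'=0$ and $\la\Delta,\varphi'\ra<0$ (case~2 of Theorem~\ref{thm:lss_part2}), so $\mu_\varphi>0=\sigma_\varphi^2$; then $\mu_{\varphi_n}\to\mu_\varphi>0$ while $\sigma_{\varphi_n}^2\to0$, so $|\theta(\varphi_n)|\to\infty$ (or $\sigma_{\varphi_n}=0$ outright) and the limiting power tends to $1$, giving $\beta=1$.

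The hard part will be this last step: the solution of the optimal-LSS equation \eqref{opt_lss} --- essentially $K^{+}\Delta$, or a null vector of $K$ in the non-solvable case --- is generically not analytic, so it cannot be inserted directly into the smaller class $\mathcal H(\I)$; the rescue is the $L^2$-density of polynomials plus the $L^2$-continuity of $\varphi\mapsto\mu_\varphi$ and $\varphi\mapsto\sigma_\varphi^2$, with the $\Delta\notin\textnormal{Im}(K)$ case requiring the separate divergence argument above. The one-parameter optimization is routine, but one must verify carefully that a genuinely two-sided rejection region cannot outperform the one-sided one, which is exactly the monotonicity computation carried out there.
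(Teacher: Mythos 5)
Your proposal is correct in substance and follows essentially the same route as the paper: reduce each fixed $\varphi$ to a Gaussian location problem via Theorem~\ref{thm:lss}, optimize the critical values to obtain power $\Phi(z_\alpha+|\theta(\varphi)|)$, and then show that the efficacy supremum over $\mathcal H(\I)$ equals $\theta^*(\mathcal W(\I))$ by $L^2$-approximation of $\varphi'$ by analytic (polynomial) derivatives, using the $L^2$-continuity of $\varphi'\mapsto\mu_\varphi$ and $\varphi'\mapsto\sigma_\varphi^2$; the value is then read off Theorem~\ref{thm:lss_part2}. This last approximation step is exactly the paper's Lemma~\ref{thm:lss_part3}. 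Your explicit monotonicity computation showing that a genuinely two-sided rejection region cannot beat the one-sided choice is more detailed than the paper, which simply asserts the asymptotically optimal critical values; that calculation is correct and a worthwhile addition.

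The one step that needs repair is your instantiation of the case $\Delta\notin\textnormal{Im}(K)$, where you select a $\varphi'$ with $K\varphi'=0$ and $\la\Delta,\varphi'\ra<0$, citing case 2 of Theorem~\ref{thm:lss_part2}. Such a $\varphi'$ exists only when $\Delta\notin\overline{\textnormal{Im}}(K)$: if $\Delta\in\overline{\textnormal{Im}}(K)\setminus\textnormal{Im}(K)$, then writing $\Delta=\lim_n Kg_n$ and using Cauchy--Schwarz shows $K\varphi'=0$ forces $\la\Delta,\varphi'\ra=0$ (this is spelled out in the paper's Lemma~\ref{ell2_opt}), so the supremum $+\infty$ is not attained by any element of $\mathcal W(\I)$ and your argument, as literally written, has nothing to approximate. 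The fix is exactly the general scheme you announce at the start of your final paragraph: take $\varphi^{(m)}\in\mathcal W(\I)$ with finite efficacies $\theta(\varphi^{(m)})\to\infty$ (these have $\sigma_{\varphi^{(m)}}>0$, so the continuity argument applies), approximate each by analytic functions, and diagonalize; this is precisely how the paper's Lemma~\ref{thm:lss_part3} treats the corner case $\theta^*(\mathcal W(\I))=+\infty$. With that adjustment your proof is complete and matches the paper's.
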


This shows that there are two possibilities, depending on the relation between the null and the alternative. If $\Delta\in\textnormal{Im}(K)$, the asymptotic power depends on the norm of $\Delta$ via $\la \Delta, K^{+}\Delta\ra^{1/2} $. This is reasonable, as a ``larger'' derivative $\Delta$ perturbs the null more, and should be easier to detect. A larger local parameter $h>0$ also leads to more power, as there are more spikes. 

The second case, $\Delta\notin\textnormal{Im}(K)$, can occur---for instance---if the alternative sample spikes separate from the bulk. In certain spiked models, the existence of a threshold beyond which the top eigenvalue separates from the bulk---a phase transition phenomenon---was established for complex-valued Gaussian white noise in \cite{baik2005phase}, and for correlated noise in \cite{benaych2011eigenvalues, bai2012sample} \cite[see also][Chapter 11]{yao2015large}. While the models differ slightly between the authors, the location of the phase transition is the same. 

For large spikes we will show in Section \ref{sec:full_pow} that the weak derivative $\dfg$ has mass outside of the support $S$ of $\fg(H)$. Hence the distribution function $\Delta$ is not in the image of $K$, which is supported on $S$. In conclusion, there is full power above the phase transition (Section \ref{sec:full_pow}). 

Intuitively, $\Delta\in\textnormal{Im}(K)$ should correspond to the spikes being below the phase transition. Indeed, in this case $L$ is supported within $S$. However, it is not clear that $L$ actually belongs to the image of the compact operator $K$. Showing this would require a more detailed, and perhaps challenging, operator-analytic study of $K$. We leave this interesting work for future research.

\subsection{Examples of optimal LSS; Numerical results}
\label{ex_plots}

\subsubsection{Standard spiked model}

We take a detour to illustrate the optimal LSS in two simple cases. First, in the ``standard spiked model'' introduced in \cite{johnstone2001distribution}, the null is specified by $H=\delta_1$ and $G_0=\delta_1$, while the alternative has $G_1 = \delta_t$. We take the aspect ratio $\gamma=1/2$. The well known BBP phase transition \citep{baik2005phase} states that for a ``subcritical'' spike $t$ below the ``phase transition'' (PT) threshold $1+\sqrt{\gamma} \approx 1.7$, the corresponding ``sample spike'' moves to the top of the bulk spectrum. For a ``supercritical'' spike $t$ above the PT threshold, the sample spike moves to a value $z(t) = t[1 + \gamma/(t-1)]$ above the bulk edge. 

In a Gaussian model, \cite{onatski2013asymptotic}  (OMH) showed that the LR test has nontrivial power below the PT. Moreover, the LR test asymptotically equivalent to the LSS with $f(x) = -\log(z(t)-x)$, which we call the ``OMH LSS''. It is also known that above the PT the Tracy-Widom test based on the top eigenvalue has asymptotically full power.

\begin{figure}
\centering
\begin{minipage}{.5\textwidth}
  \centering
  \includegraphics[scale=0.33]
  {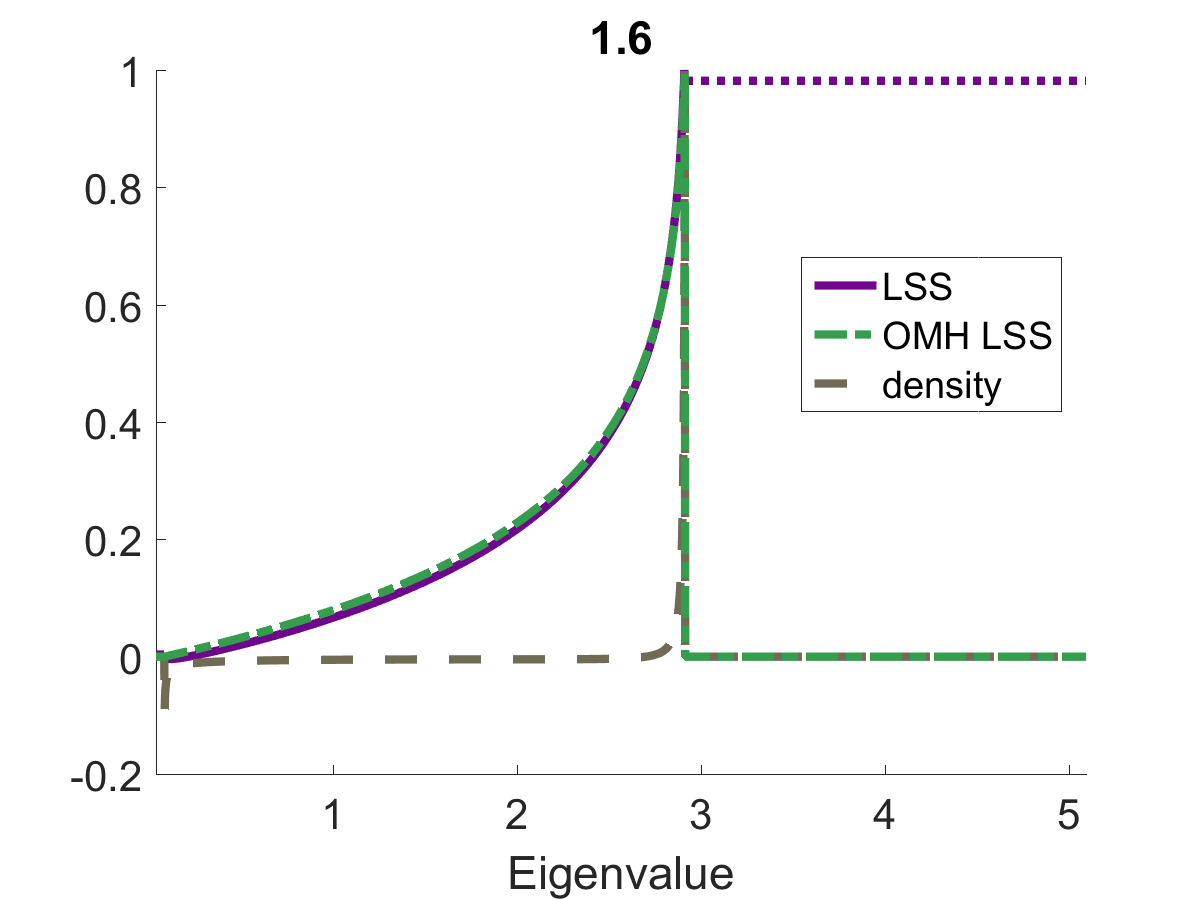}

\end{minipage}%
\begin{minipage}{.5\textwidth}
  \centering
  \includegraphics[scale=0.33]{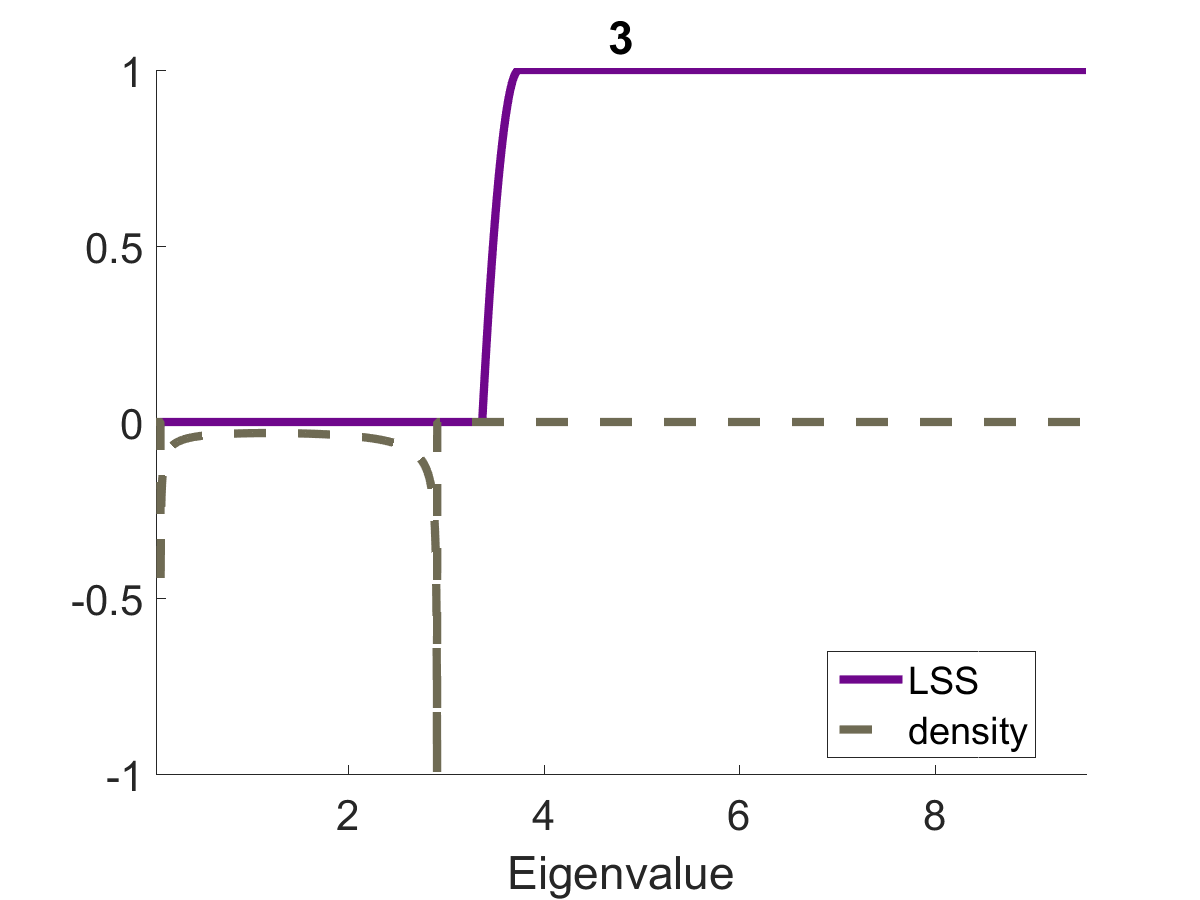}
\end{minipage}
\caption{Optimal LSS and density of $\dfg(H,G_1)$ with $H=\delta_1$, $G_0 = \delta_1$, $G_1 = \delta_t$, $\gamma=1/2$. On the left the spike $t=1.6$ is below the phase transition, while on the right $t=3$ is above the phase transition. On the left figure, the LSS equivalent to the LRT from OMH is also plotted, and agrees with our LSS. %Vertical lines mark the edges of the Marchenko-Pastur bulk.
}
\label{fig1}
\end{figure}

With these preparations, we show the density of the weak derivative $\dfg(H,G_1)$, the pointwise values of our optimal LSS, and the OMH LSS (Fig.  \ref{fig1}).  They are normalized to have maximum absolute value equal to unity. On the left plot, the spike $t=1.6$ is below the PT, while on the right $t=3$ is above the PT. 

We observe the following:
\begin{enumerate}
\item \emph{The density of $\dfg(H,G_1)$}: The density of the weak derivative exists within the support of the Marchenko-Pastur bulk $[(1-\sqrt{\gamma})^2,(1+\sqrt{\gamma})^2]$. In the subcritical case, we will show later that $\dfg(H,G_1)$ is supported on the same set as the bulk (see Proposition \ref{prop_df}). Furthermore we see that it has a positive singularity at the right edge, and a negative singularity at the left edge. This shows that the perturbation by the spike $t$ affects the \emph{whole} bulk, and the effect is strongest at the two edges. Since $\smash{t>1}$ and the sample spike moves to the right edge, it makes sense that the perturbation ``moves mass'' from towards the right edge. No mass is moved outside the bulk, consistent with the classical spiked model \citep{baik2005phase}. 

In the supercritical case, we will show later in Proposition \ref{prop_df} that $\dfg(H,G_1)$ has a point mass at $z(t)$. Now the density is negative throughout the bulk, showing that the perturbation moves mass away. 

\item \emph{The LSS}: In the subcritical case, our optimal LSS agrees with the Onatski-Moreira-Hallin LSS  \citep{onatski2013asymptotic} within numerical precision. This confirms that we recover their methods as a special case. It is reassuring that we match the state of the art method for this special case, given that our approach is very different. %However, in additional experiments we noticed that the numerical match with the OMH LSS gets worse as the spike gets very close to the PT, possibly because the optimal LSS equation becomes more ill-posed and harder to solve accurately. 

Our theory only specifies the optimal LSS within the support of the Marchenko-Pastur map---and we extend it as a constant to the complement, see Section \ref{comput}. This is illustrated by the dotted line.

For a supercritical spike there is more latitude in the choice of the optimal LSS. Here we set it equal to 0 on the support of the bulk and equal to unity at and above the location of the sample spike $z(t)$, interpolating by an Epanechnikov kernel (see Section \ref{comput}).

\end{enumerate}

\subsubsection{Nonparametric spiked model}

Next we consider an example where the null hypothesis is a non-identity distribution for the population PC variances. We let  $H=2^{-1}(\delta_1+\delta_3)$, and $G_0=H$, corresponding to a mixture of two distinct PC variances. In this background noise, we want to test for the presence of a PC with magnitude $t$, corresponding to $G_1 = \delta_t$. 

We show the density of $\dfg(H,G_1)$, and the optimal LSS for $\gamma=1/2$ (Fig.  \ref{fig2}) and $\gamma=1/10$ (Fig.  \ref{fig3}). We consider two values for $t$, 0.8 and 3.6, both of which turn out to be subcritical. 

\begin{figure}
\centering
\begin{minipage}{.5\textwidth}
  \centering
  \includegraphics[scale=0.33]{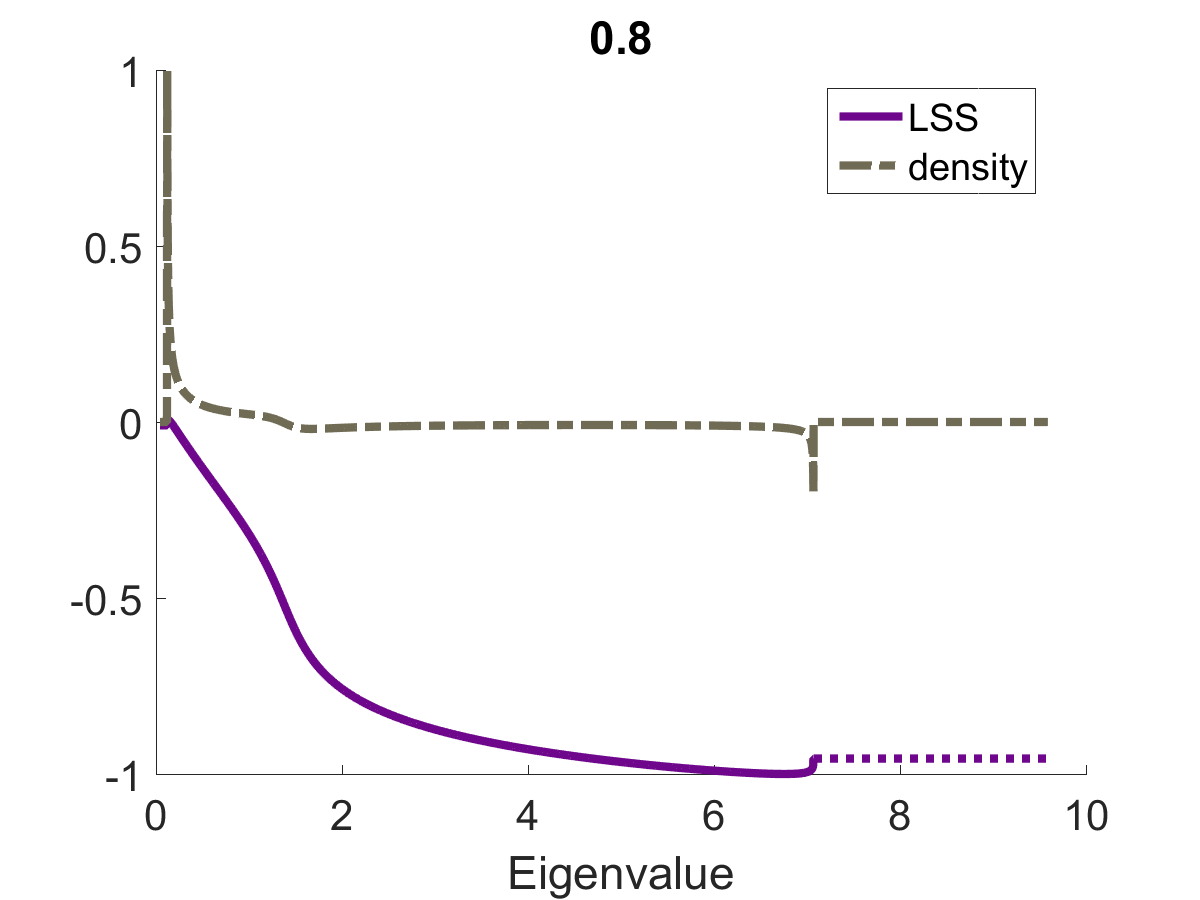}
\end{minipage}%
\begin{minipage}{.5\textwidth}
  \centering
  \includegraphics[scale=0.33]{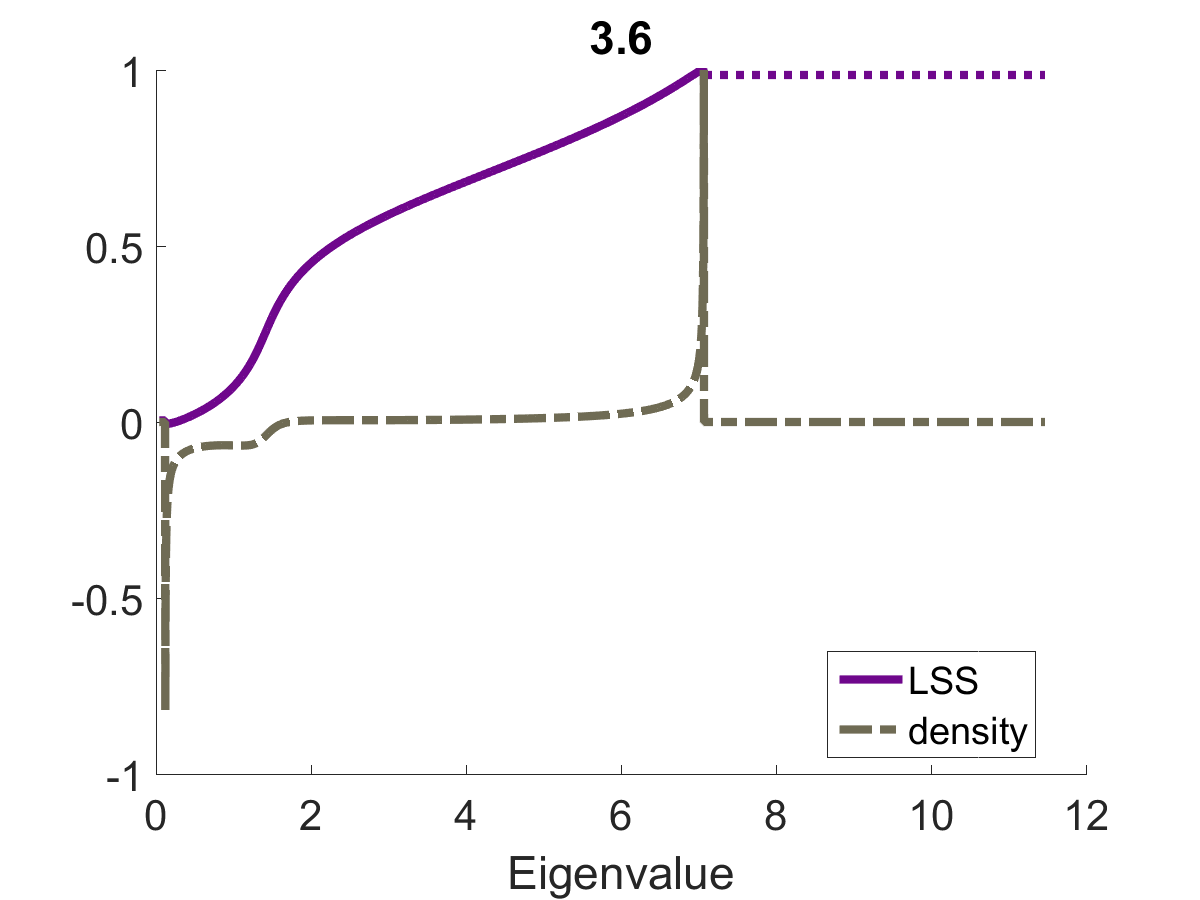}
\end{minipage}
\caption{Density of $\dfg(H,G_1)$ and optimal LSS with $H=2^{-1}(\delta_1+\delta_3)$, $G_1 = \delta_t$, $\gamma=1/2$. On the left plot, the spike $t=0.8$; while on the right plot $t=3.6$; both are subcritical. }
\label{fig2}
\end{figure}

\begin{figure}
\centering
\begin{minipage}{.5\textwidth}
  \centering
  \includegraphics[scale=0.33]
  {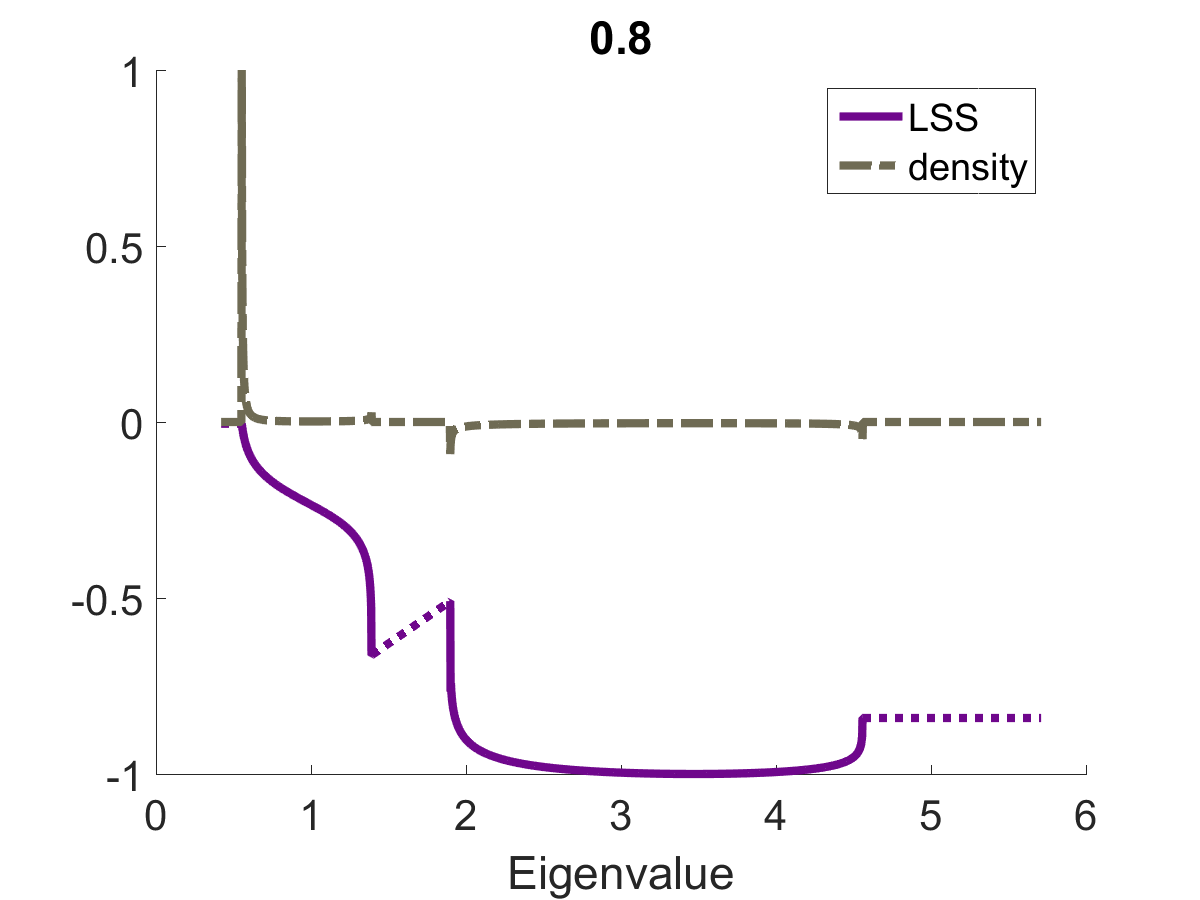}
\end{minipage}%
\begin{minipage}{.5\textwidth}
  \centering
  \includegraphics[scale=0.33]{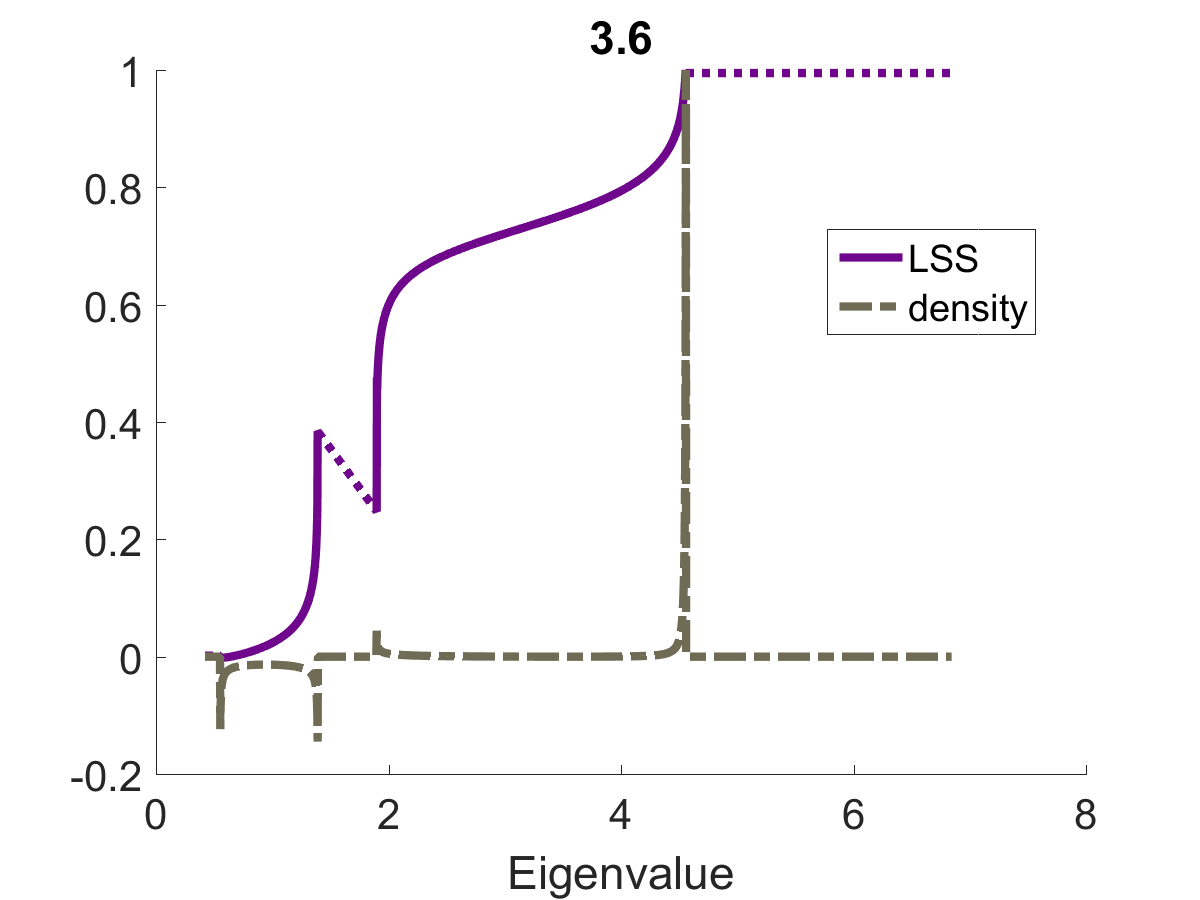}
\end{minipage}
\caption{The same plot as Figure \ref{fig2}, except with $\gamma=1/10$. }
\label{fig3}
\end{figure}

We observe the following:
\begin{enumerate}
\item  \emph{The density of $\dfg$}: For $\gamma = 1/10$, the bulk of sample eigenvalues has two components; for $\gamma = 1/2$, it has only one. This affects both the weak derivative and the optimal LSS. For $\gamma=1/2$, the singularities of $\dfg$ are similar to the standard case. For $\gamma=1/10$, the spike seems to perturb positively the component of the bulk containing it, and perturb negatively the other component.

\item \emph{The LSS}: The optimal LSS are highly nonlinear, and differ a great deal between the four settings ($\gamma \in \{1/10,1/2\}$, $t \in \{0.8,3.6\}$). Note that our theorem only specifies the LSS within the support of the bulk $S$. We extend them by linear interpolation outside, see Section \ref{comput}; this is indicated by the dotted lines.  

In general the optimal LSS are ``large'' where the density of $\dfg$ is positive. However, they have nontrivial shapes; in particular, they showing sharp ``peaks'' at the edges. This shows that the test statistics have qualitatively novel properties. They do not look like the---typically polynomial---LSS equivalent to existing tests of sphericity, see Sec. \ref{lss_examples}.
\end{enumerate}

\subsection{Simulation results}
\label{numer_res}

\begin{figure}[p]
\centering
\begin{tabular}{ccc}

\includegraphics[width=\FW, trim = \TRA mm \TRB mm \TRC mm \TRD mm, clip = TRUE]{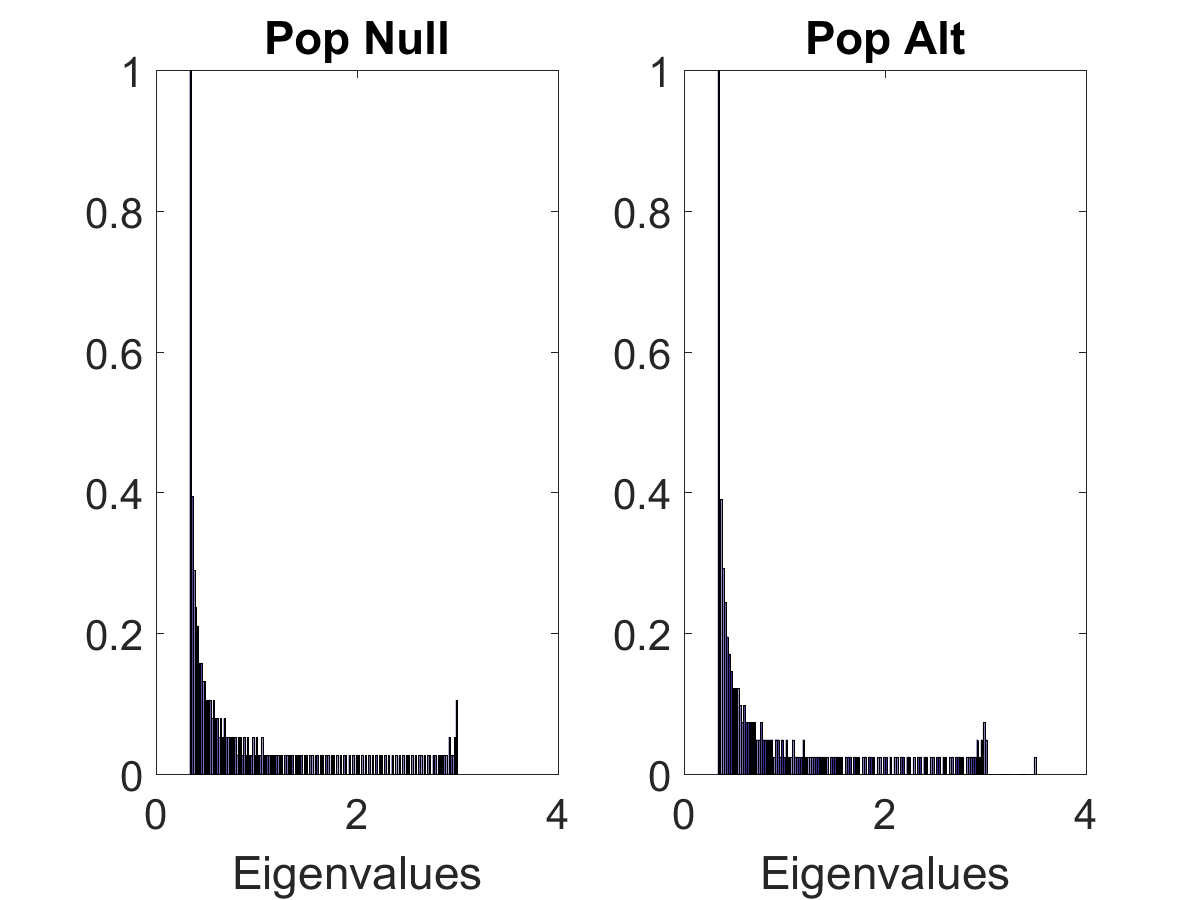} &
\includegraphics[width=\FW, trim = \TRA mm \TRB mm \TRC mm \TRD mm, clip = TRUE]{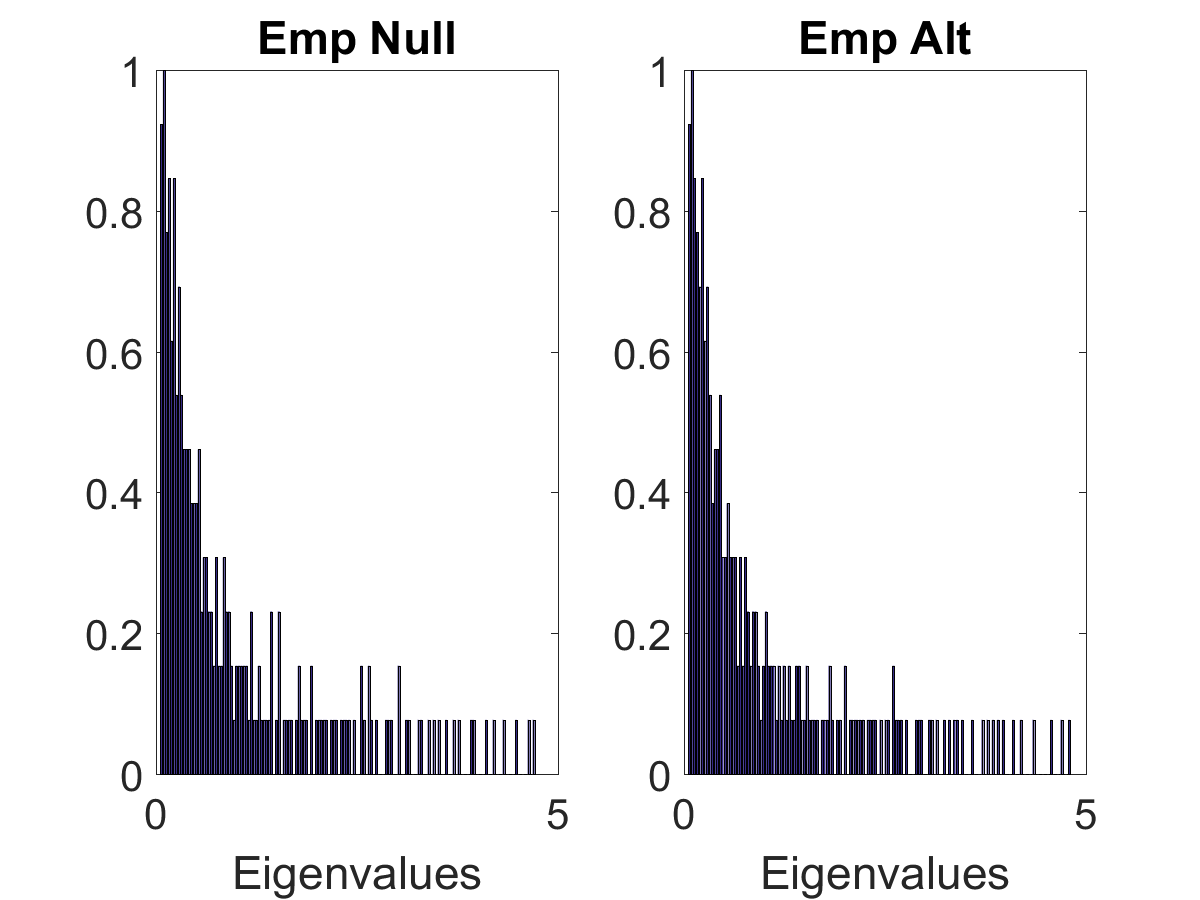} \\
\includegraphics[width=\FW, trim = \TRA mm \TRB mm \TRC mm \TRD mm, clip = TRUE]{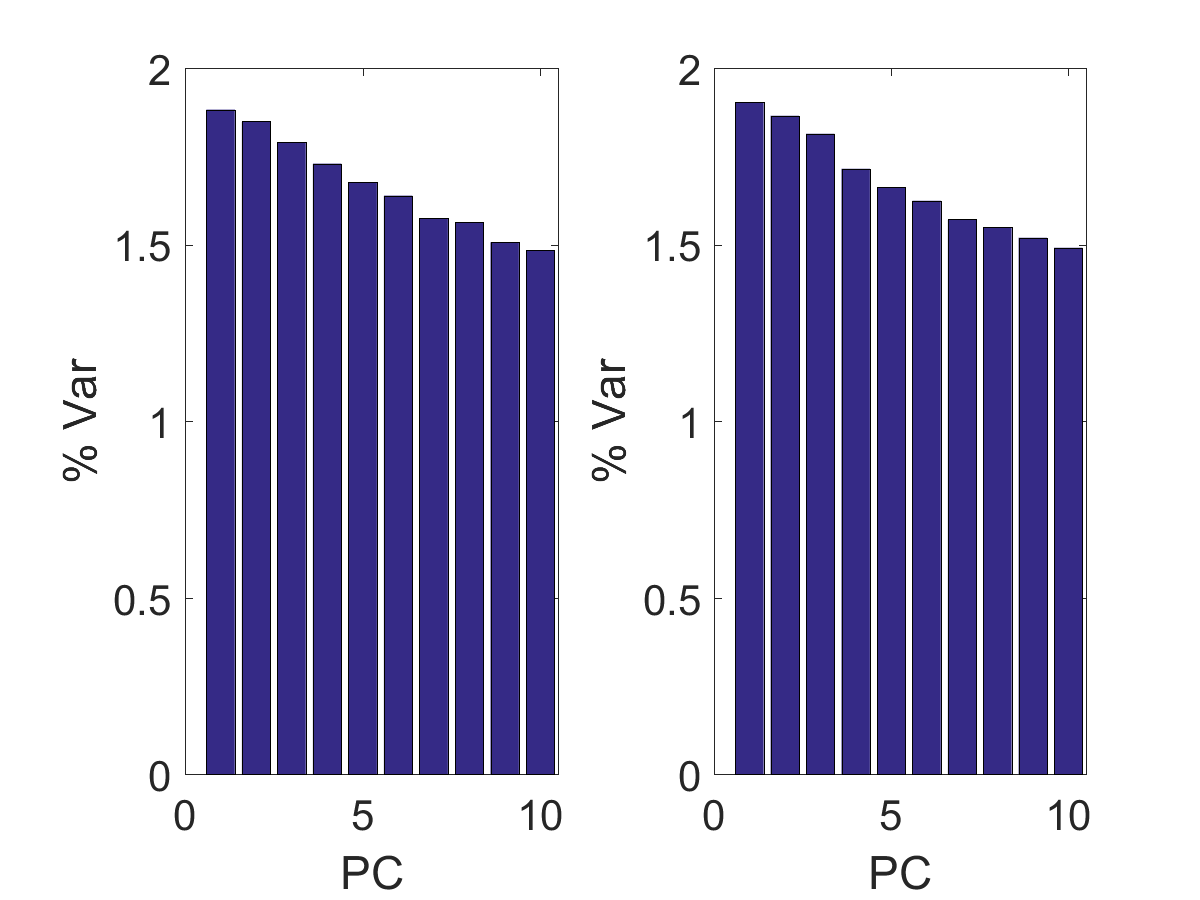}  &
\includegraphics[width=\FW, trim = \TRA mm \TRB mm \TRC mm \TRD mm, clip = TRUE]{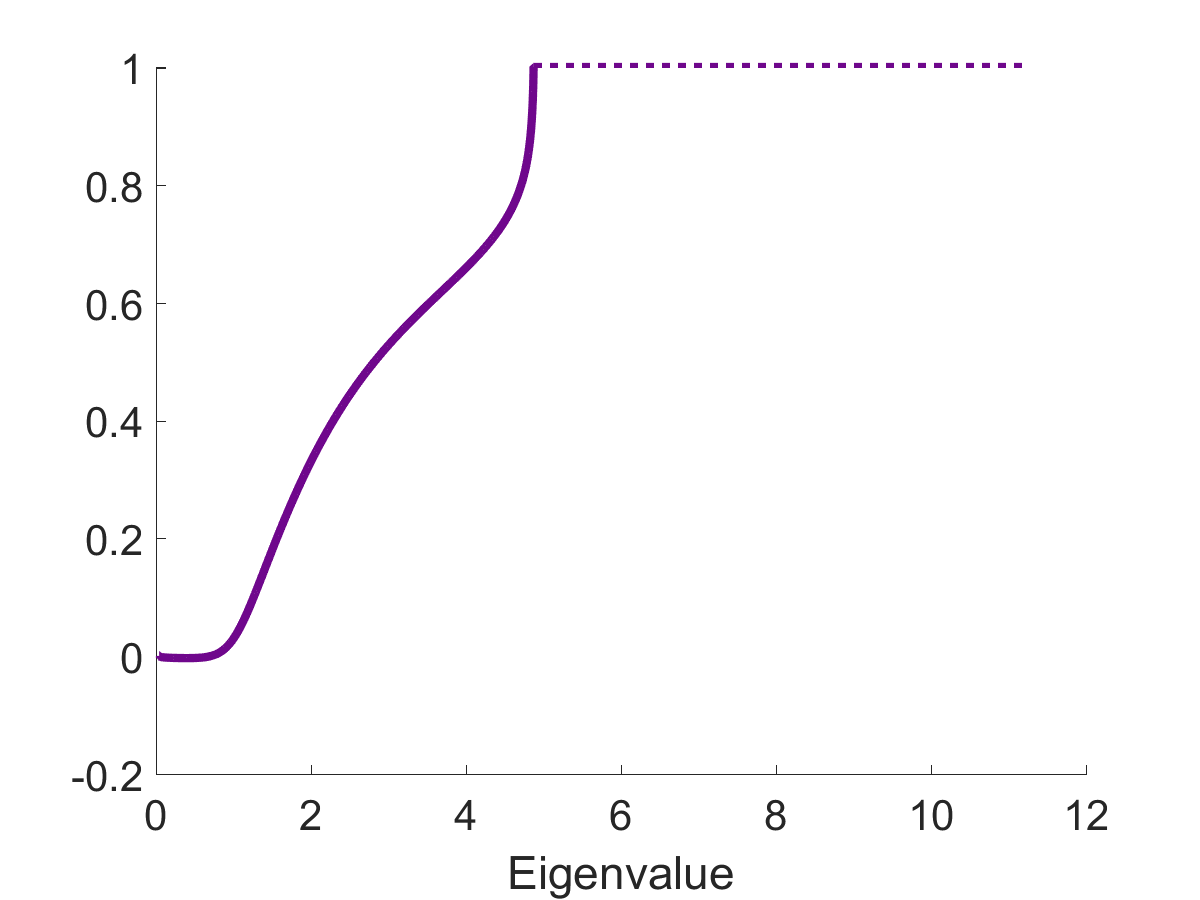}   \\

\includegraphics[width=\FW, trim = \TRA mm \TRB mm \TRC mm \TRD mm, clip = TRUE]{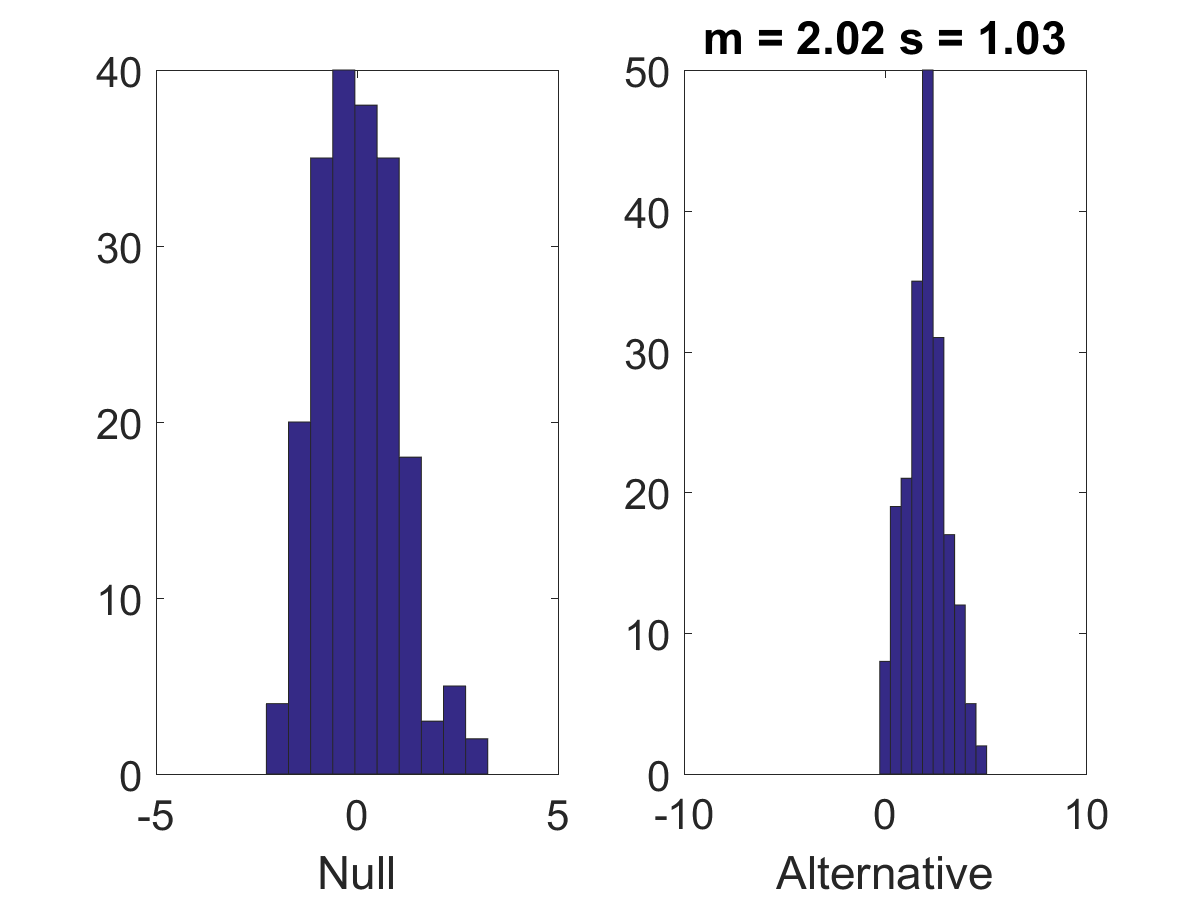} &
\includegraphics[width=\FW, trim = \TRA mm \TRB mm \TRC mm \TRD mm, clip = TRUE]{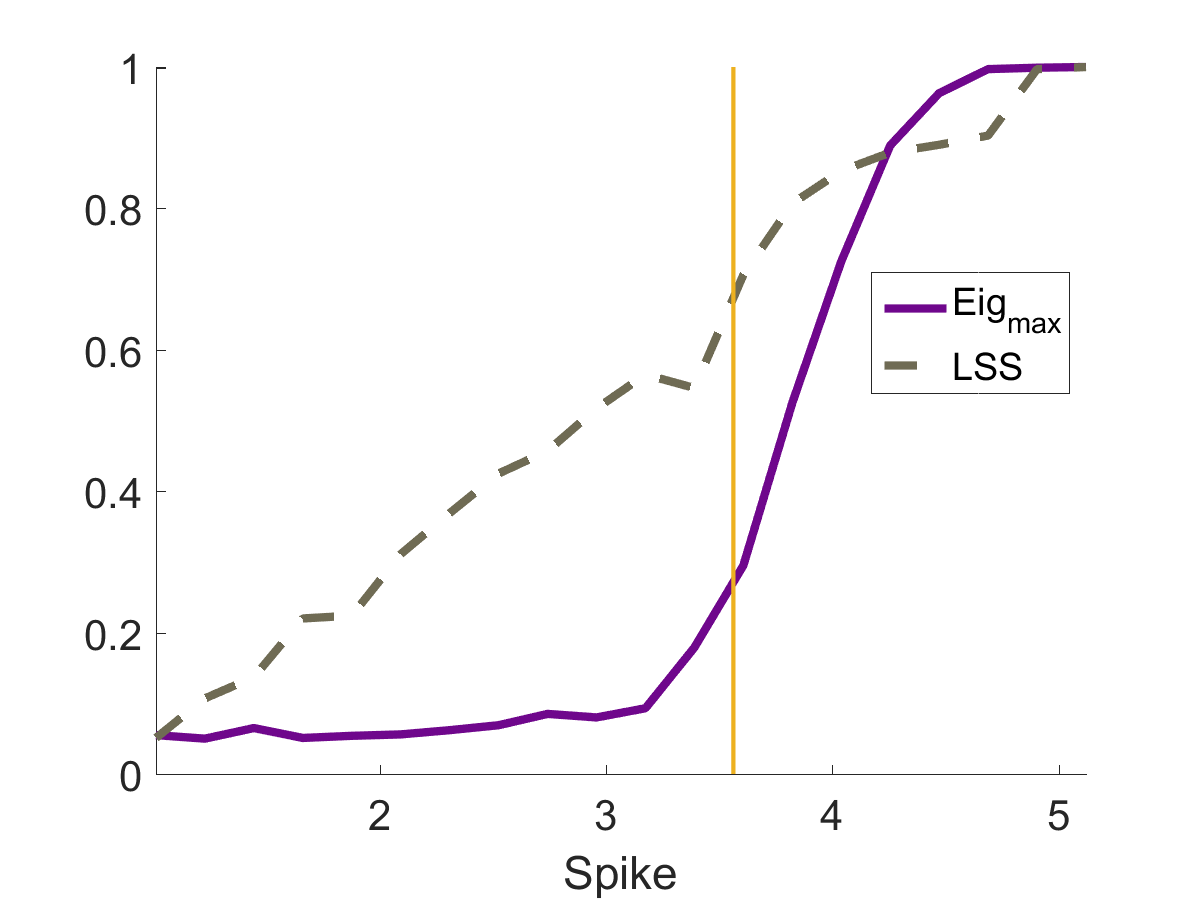}  \\
\end{tabular}
\caption{Simulation results. \emph{Top row, left}: Histogram of population eigenvalues under null and alternative. \emph{Top row, right}: Histogram of sample eigenvalues under null and alternative, for one MC instance. \emph{Middle row, left}: Scree plot of top 10 sample eigenvalues under null and alternative, for one MC instance. \emph{Middle row, right}: pointwise plot of optimal LSS. \emph{Bottom row, left}: Histogram LSS under null and alternative, over 200 MC instances. \emph{Bottom row, right}: Power of optimal LSS and top eigenvalue-based tests as a function of the position of the spike under the alternative.}
\label{fig:MC}
\end{figure}

To illustrate the finite-sample performance of our methods, we present the results of a Monte Carlo (MC) simulation (Fig. \ref{fig:MC}).  The eigenvalues of an autoregressive covariance matrix of order 1 (AR-1) with $\Sigma_{ij} = \rho^{|i-j|}$, and $\rho = 0.5$ make up the null $H$. The sample size is $n=500$ while $\gamma = 1/2$, so the dimension is $p=250$. For large $p$ it is well known that the largest eigenvalue of $\Sigma$ is approximately $(1+\rho)/(1-\rho)$, which equals three (3) in our case. The null spike $s^0=1$ is buried within the population bulk, while the alternative spike $s^1=3.5$ sticks out of it. The histograms of the null and alternative are in the top left plot of Fig. \ref{fig:MC}. The spike $s^1$ is clearly visible. 

We generate a random Gaussian matrix $X = Z \Sigma^{1/2}$ with this covariance matrix and aspect ratio. The histograms of the sample eigenvalues---for both null and alternative---are in the top right plot of Fig. \ref{fig:MC}. The top sample spike does not separate obviously from the sample bulk. This is reinforced by the scree plots of the top 10 eigenvalues under null and alternative, shown in the middle row left plot of Fig. \ref{fig:MC}. The two scree plots look nearly indistinguishable! 

Is it possible to distinguish the two distributions? Our approach is to use the optimal LSS, plotted in in the middle row, right plot of Fig. \ref{fig:MC}. This LSS puts a large weight on the top eigenvalues, while also putting a smaller weight on the middle eigenvalues; and it is extended as a constant outside the bulk. This can indeed distinguish between the two distributions---in the bottom left plot of Fig. \ref{fig:MC} we show the histogram of the LSS over 200 MC samples; we have used the empirical mean and standard error under the null to standardize both histograms. Under both null and alternative, the distributions look approximately normal. Under the alternative, the distribution has mean approximately equal to 2, which is highly encouraging. 

\subsubsection{Increasing the spike}

To examine the power more thoroughly, we perform a broader MC simulation, increasing the alternative spike $s^1$ from 1 to 5. We compare the test which rejects if the top eigenvalue is large to the test based on the optimal LSS---which rejects if the LSS is large enough. For both, we set the critical values based on the empirical distribution of the test statistics under the null, to ensure finite sample type I error control at level $\alpha=0.05$. We record 1000 MC iterates with sample size $n=2000$ and other parameters kept the same as before. 

The results---in the bottom right plot of Fig. \ref{fig:MC}---show that the LSS-based test has power even below the PT threshold, while the top eigenvalue test does not. The vertical line shows the location of the asymptotic PT. 

\begin{figure}
\centering
\begin{minipage}{.5\textwidth}
  \centering
  \includegraphics[scale=0.33]
  {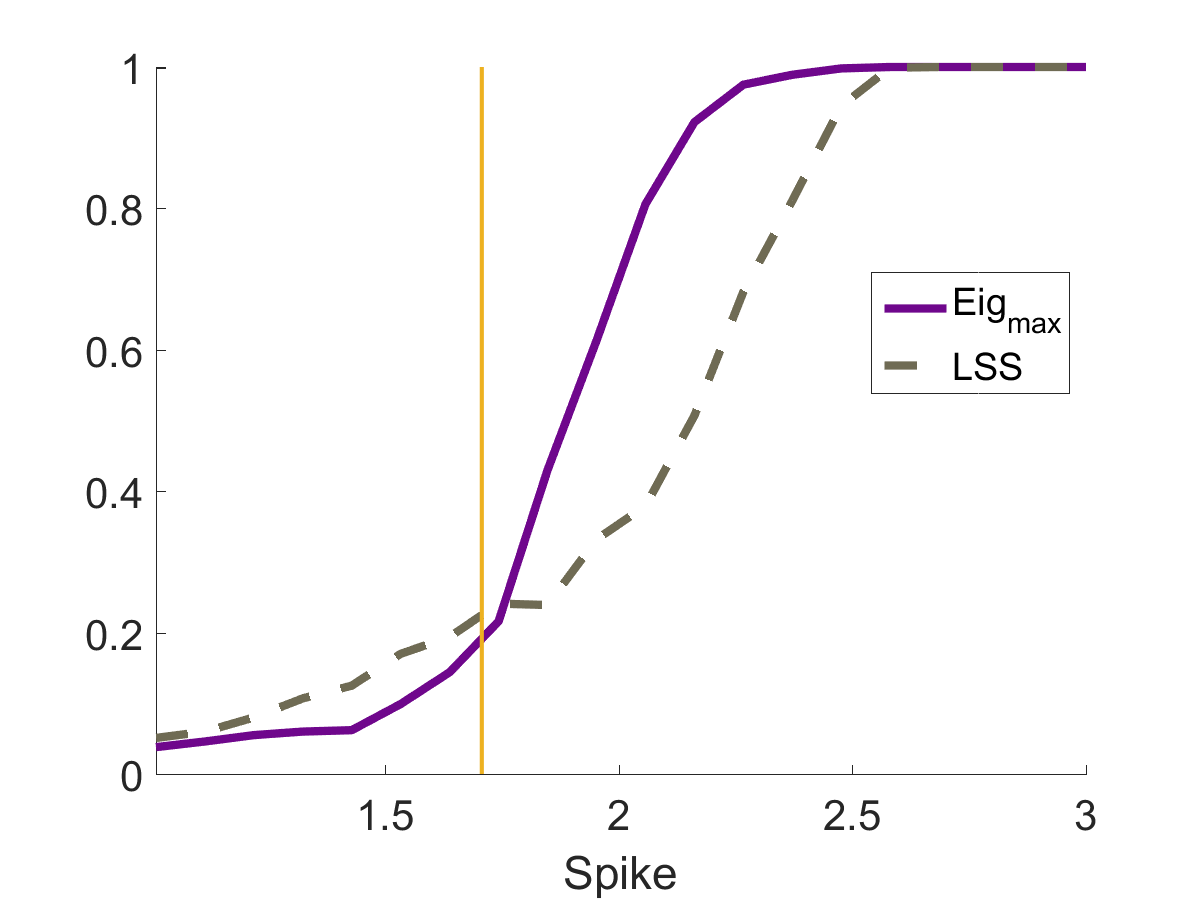}
\end{minipage}%
\begin{minipage}{.5\textwidth}
  \centering
  \includegraphics[scale=0.33]{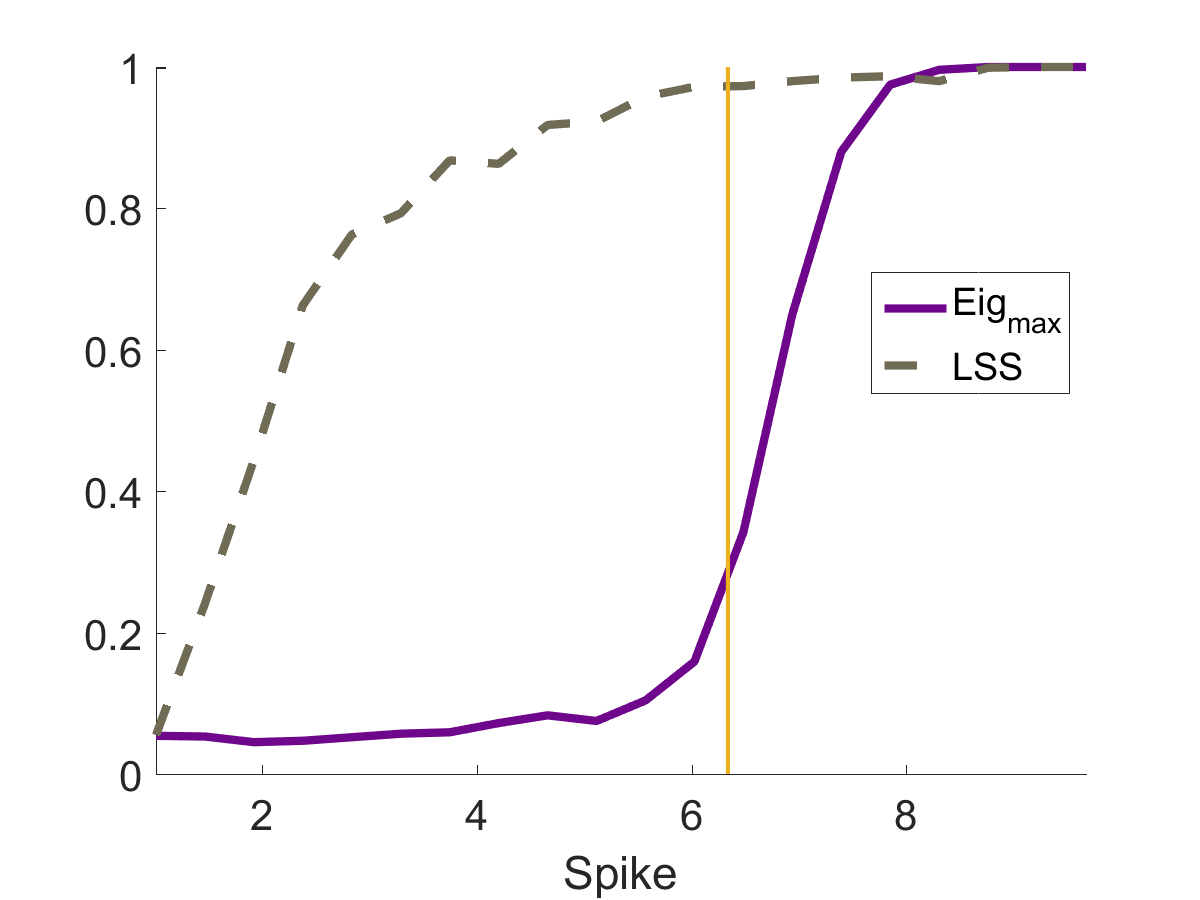}
\end{minipage}
\caption{Power of optimal LSS and top eigenvalue based tests for increasing alternative spike. Left: $\rho = 0$ (identity matrix). Right: $\rho = 0.7$.}
\label{increase_spike}
\end{figure}

To get a broader view of the achievable power in various scenarios, we repeat the last experiment for two additional values of $\rho$. We use $\rho=0$---corresponding to an identity covariance matrix---and $\rho=0.7$, which allows for higher correlations.  In Fig. \ref{increase_spike}, we show the results recorded over 1000 MC iterates with sample size $n=500$ and $\gamma=1/2$. 

For the identity case, the optimal LSS has weak finite sample power. The top eigenvalue test surpasses it above the PT. In contrast, for $\rho=0.7$, the LSS has a lot of power below the PT. The broad conclusion of these experiments is that for eigenvalue distributions that are ``widely spread'', one has indeed the power to detect spikes below the PT. 

\subsection{Properties of the optimal LSS}
\label{prop_lss}

\subsubsection{Full power above the phase transition}
\label{sec:full_pow}

We now continue to study testing in PCA, and derive some fundamental properties of the optimal LSS. 
 In the first section we show that the optimal LSS have full power when the spikes are above the known phase transition threshold from classical spiked models.  This relies on studying the weak derivative of the Marchenko-Pastur map.  For simplicity we will let $G_0=H$, corresponding to a null that is equal to $H$. In this case, $\dfg(H,G_0)=0$, so $\Delta=\dfg(H,G_1)$. With extra work, similar results can be derived for general $G_0$.  
 
We are interested to find the cases where the weak derivative has mass outside of the support $S = \text{Supp}(\fg(H))$. In such a case $\Delta(x)\neq 0$ must occur on a set of positive measure outside $S$. Since the kernel is supported on $S$,  the optimal LSS equation cannot have a solution. This argument will show that the asymptotic power is unity.

We say that a spike $s_j$ is \emph{above the phase transition} if $s_j \in -1/v(S^c)$, where $v$ is the companion Stieltjes transform of $\fg(H)$. This is consistent with the previous definitions for the ''generalized'' spiked model in \cite{benaych2011eigenvalues}, \cite{bai2012sample}; \cite[see also][Chapter 11]{yao2015large}. %We will show in Proposition \ref{prop_df} that this is equivalent to creating a point mass in the weak derivative. 
Our goal is to prove the following result: 

\begin{theorem}[Full power above phase transition]
\label{full_pow}
Suppose that in the nonparametric spiked model we have  $\smash{H = d^{-1}\sum_{i=1}^{d}\delta_{t_i}}$, $G_0=H$, and $\smash{G_1 = h^{-1}\sum_{i=1}^{h}\delta_{s_i}}$.  If there is any spike above the phase transition---so that $s_j \in -1/v(S^c)$ for some $j$---then the asymptotic power of the optimal LSS is unity.
\end{theorem}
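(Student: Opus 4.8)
The plan is to read the theorem off the power dichotomy of Theorem~\ref{pow_lss}: the optimal LSS attains asymptotic power one exactly when $\Delta \notin \textnormal{Im}(K)$ (equivalently, the optimal LSS equation $K(\varphi') = -\eta\Delta$ has no $L^2$ solution), so it suffices to show that the presence of a supercritical spike forces $\Delta$ out of the image of $K$. Since here $G_0 = H$, the mixture $(1-\ep)H + \ep G_0$ does not depend on $\ep$, so $\dfg(H,G_0) = 0$ by \eqref{dfg}, and $\Delta$ is simply the distribution function of the signed measure $\dfg(H,G_1)$. The target is therefore the behaviour of $\dfg(H,G_1)$ outside the bulk $S = \textnormal{Supp}(\fg(H))$.

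First I would record that the weak derivative is additive in the spikes. Since $h^{-1}\sum_{i=1}^h(\delta_{s_i} - H) = G_1 - H$ and the limit in \eqref{dfg} is a directional derivative of $\fg$ at $H$ (well-defined by Theorem~\ref{weak_der}), one gets $\dfg(H,G_1) = h^{-1}\sum_{i=1}^h \dfg(H,\delta_{s_i})$. Next, for a subcritical spike $s_i$, Proposition~\ref{prop_df} gives that $\dfg(H,\delta_{s_i})$ is supported inside $S$, so the entire mass of $\dfg(H,G_1)$ outside $S$ comes from the supercritical spikes. For a spike $s_j$ above the phase transition, i.e.\ $s_j = -1/v(z_j)$ with $z_j \in S^c$, Proposition~\ref{prop_df} furnishes the structural fact that $\dfg(H,\delta_{s_j})$ carries a nonzero atom at $z_j$ (the limiting location of the corresponding separated sample eigenvalue), on top of an absolutely continuous part supported in $S$; since distinct supercritical spikes yield atoms at distinct points $z_j$ and equal spikes contribute with the same sign, no cancellation occurs. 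Hence the restriction of $\dfg(H,G_1)$ to the open set $\I \setminus S$ is a nonzero (positive) combination of atoms. Consequently its distribution function $\Delta$ jumps at each such $z_j$, so $\Delta$ fails to vanish almost everywhere on any neighbourhood of $z_j$ lying in $\I \setminus S$. On the other hand, the kernel \eqref{kernel} satisfies $k(x,\cdot) \equiv 0$ for $x \notin S$, so every element of $\textnormal{Im}(K)$ vanishes a.e.\ on $\I \setminus S$. Therefore $\Delta \notin \textnormal{Im}(K)$, and Theorem~\ref{pow_lss} gives asymptotic power one.

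The main obstacle is not in this deduction but in the structural input it relies on, namely Proposition~\ref{prop_df}: one must show that, for a supercritical spike $s$, the weak derivative $\dfg(H,\delta_s)$ genuinely places a nonvanishing atom at $-1/v(s)\in S^c$ rather than merely redistributing mass within the bulk. This rests on the fine analysis of the weak derivative of the Marchenko--Pastur forward map established in Theorem~\ref{weak_der}. Granting that, the remaining steps are routine bookkeeping: additivity of $\dfg$ over the spikes, the observation that supercritical atoms cannot cancel, the support property of the logarithmically singular kernel \eqref{kernel}, and checking that the relevant subinterval of $S^c$ lies inside the working interval $\I = [a,b]$ (which is arranged by the choice of $\I$, since it is required to contain the limiting spectral support of the alternative sequence, including its separated outliers).
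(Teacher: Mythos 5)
Your proposal is correct and follows essentially the same route as the paper: reduce to showing $\Delta \notin \textnormal{Im}(K)$ via the power dichotomy of Theorem \ref{pow_lss}, invoke Proposition \ref{prop_df} to get a point mass of the weak derivative at the sample-spike location in $S^c$ for any supercritical spike, and conclude from the fact that the kernel \eqref{kernel} (hence every element of $\textnormal{Im}(K)$) vanishes off $S$. The only cosmetic difference is that you decompose $\dfg(H,G_1)$ spike-by-spike using linearity before applying the proposition, whereas the paper applies Proposition \ref{prop_df} directly to the mixture $G_1$, which it already covers.
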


\begin{proof}
If there is a spike $s_j$---with mass $u_j$ in $G_1$---above the phase transition, then $\dfg(H,G_1)$ has a point mass of weight $\gamma u_j>0$ for some $x \in S^c$ by Proposition \ref{prop_df} (to be proved next). Therefore, the distribution function $\Delta$ has a discontinuity at $x$, and in particular, it is nonzero on  a subset of $S^c$ with positive Lebesgue measure. Since the kernel $k$ is zero on $S^c$, $\Delta$ is not in the image of $K$. By Theorem \ref{thm:lss}, the asymptotic power is unity.
\end{proof}

It remains to prove the following key proposition, which establishes properties of the weak derivative $\dfg$. It will be convenient to define the \emph{spike forward map} $\psi(s)$, which for a population spike $s$ and bulk $H$, gives the location of the sample spike under the effect of the bulk $H$. This is defined through its functional inverse, which is expressed as $\psi^{-1}(x) = -1/v(x)$ \cite[see][Chapter 11]{yao2015large}; and one can verify that $\psi$ is well-defined outside of the support of $H$. The values $x \in S^c$ in the image of the spike forward map,  i.e., for which $x = \psi(s_j)$ for some $j$, will be called the \emph{sample spikes}. We study the weak derivative for arbitrary weighted mixtures of point masses.

\begin{proposition}[Properties of the weak derivative]
\label{prop_df}
Suppose the population bulk is $\smash{H = \sum_{i=1}^{k} w_i \delta_{t_i}}$, with $w_i >0$ such that $\sum_i w_i =1$. Suppose the spikes have distribution $\smash{G = \sum_{j=1}^{l} u_j \delta_{s_j}}$ with distinct $s_j>0$ and weights $u_j>0$ summing to one. 
Let the support of the forward map be $S = \textnormal{Supp}(\fg(H))$, and consider the weak derivative $\dfg(H,G)$. Then, 
\begin{enumerate}
\item $\dfg$ has a density at all in the interior of $S$, $x\in \textnormal{int}(S)$.
\item $\dfg$ has a point mass $\gamma u_j$ at sample spikes $x = \psi(s_j)$, i.e., for the values $x\in S^c$ such that $s_j = -1/v(x)$ for some $j$.

\item $\dfg$ has zero density at all $x$ outside  $\textnormal{int}(S)$ that are not sample spikes. 
\end{enumerate}
\end{proposition}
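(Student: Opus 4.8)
The plan is to identify the signed measure $\dfg(H,G)$---whose existence is granted by Theorem~\ref{weak_der}---through its Stieltjes transform, obtained by differentiating the Silverstein fixed-point equation in the mixing parameter, and then to read off the absolutely continuous part by Stieltjes inversion and to locate the atoms as the poles of that transform. Concretely, for $\ep\in[0,1]$ put $H_\ep=(1-\ep)H+\ep G$ and let $v_\ep(z)=v_\gamma(z;H_\ep)$ be the companion Stieltjes transform, characterized by $z=-1/v_\ep+\gamma\int t(1+tv_\ep)^{-1}\,dH_\ep(t)$. Writing $N_\mu(w)=\int t(1+tw)^{-1}\,d\mu(t)$ and $z(w)=-1/w+\gamma N_H(w)$, differentiating this identity in $\ep$ at $\ep=0$ (with $v:=v_0$, $\dot v:=\partial_\ep v_\ep|_0$) yields the linear relation $\dot v\cdot z'(v)=\gamma\,[N_H(v)-N_G(v)]$, i.e.
\begin{equation*}
\dot v(z)=\frac{\gamma\,[\,N_H(v(z))-N_G(v(z))\,]}{z'(v(z))},\qquad z'(w)=\frac{1}{w^{2}}-\gamma\!\int\!\frac{t^{2}\,dH(t)}{(1+tw)^{2}}.
\end{equation*}
Since $v_\ep=\gamma m_{\fg(H_\ep)}+(1-\gamma)(-1/z)$ and the Stieltjes transform is continuous along the (tight) limit defining $\dfg$, the transform $m_{\dfg}$ of $\dfg(H,G)$ is a fixed multiple of $\dot v$; because $H,G$ are finitely supported, $N_H,N_G,z'$ are rational in $w$, so $m_{\dfg}$ is an explicit algebraic function of $z$, holomorphic on $\mathbb C\setminus\mathbb R$ and $o(1/|z|)$ at infinity (consistent with $\dfg$ having total mass $0$).

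\textbf{Density on $\textnormal{int}(S)$.} For $x\in\textnormal{int}(S)$ the boundary value $v(x):=\lim_{\eta\downarrow0}v(x+i\eta)$ exists with $\Im v(x)>0$ \cite{silverstein1995analysis}, and on $\textnormal{int}(S)$ the limiting density of $\fg(H)$ is positive and real-analytic---equivalently $z'(v(x))\neq0$, band edges being exactly the points where $z'$ vanishes, by \cite{silverstein1995analysis} and the Silverstein--Choi description of the support. Since $v(x)$ is non-real it avoids the real poles $\{-1/t_i\}\cup\{-1/s_j\}$ of $N_H,N_G$, so $m_{\dfg}(x+i0)$ is finite with finite imaginary part; Stieltjes inversion then shows $\dfg$ is absolutely continuous near $x$ with density $\pi^{-1}\Im m_{\dfg}(x+i0)$, which is claim~(1).

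\textbf{Behaviour off $S$ and the atoms.} For $x\notin\overline S$, $x\neq0$, the boundary value $v(x)$ is real \cite{silverstein1995analysis} and lies in one of the open real intervals on which $z'>0$; such an interval contains neither $0$ nor any pole $-1/t_i$. If $x$ is not a sample spike then $v(x)\neq-1/s_j$ for every $j$, so $m_{\dfg}(x+i0)$ is real and finite: $\dfg$ has zero density at $x$ and no atom there, giving claim~(3) (the finitely many band edges, where the density may diverge from the $S$-side, form a null set). If instead $x=\psi(s_j)$, then by the definition of $\psi$ one has $v(x)=-1/s_j$, so $1+s_jv(z)\to0$ as $z\to x$ and $N_G(v(z))$---hence $m_{\dfg}(z)$---has a simple pole at $z=x$; expanding $v(z)=v(x)+v'(x)(z-x)+\cdots$ and using the inverse-function identity $v'(x)\,z'(v(x))=1$, the residue is computed explicitly and equals $-\gamma u_j$, i.e.\ $\dfg$ has a point mass $\gamma u_j$ at $\psi(s_j)$ (the factor $\gamma$ entering through the companion-to-ESD conversion), which is claim~(2). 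Distinct spikes map to distinct points and there are finitely many of them, so these are the only atoms outside $\textnormal{int}(S)$, completing the proof.

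\textbf{Main obstacle.} The substantive input is the spectral geometry used in the last two steps: that $v(x+i0)$ is non-real precisely on $\textnormal{int}(S)$ and real on the spectral gaps, and that $z'(v(x+i0))\neq0$ there, so that the density stays finite inside $S$ and each sample spike is an isolated simple pole of $m_{\dfg}$ rather than a branch point. This rests on the Silverstein--Choi analysis of the support of $\fg(H)$ together with the algebraicity of all objects for finitely supported $H$; the remaining work---the residue bookkeeping at the sample spikes that pins down the weight $\gamma u_j$ and the handling of the finitely many exceptional points---is a short computation.
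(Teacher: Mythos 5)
Your proposal is correct and follows essentially the same route as the paper: it uses the explicit Stieltjes transform of the weak derivative (which the paper takes from Theorem~\ref{weak_der} and you re-derive by differentiating the Silverstein equation), reads off the density on $\textnormal{int}(S)$ and on the gaps from the boundary behaviour of $v$ and $v'$ established by Silverstein--Choi, and identifies the atoms at the sample spikes via the same residue computation, $1+s_jv(z)=s_jv'(x)(z-x)+O((z-x)^2)$ giving residue $-\gamma u_j$ and hence mass $\gamma u_j$. The only cosmetic difference is that the paper isolates the pole-to-atom step as a separate contour-integral lemma (Lemma~\ref{lemma:pointmass}) and rules out $v(x)=-1/t_i$ by inspecting the Silverstein equation rather than by locating $v(x)$ in an interval where $z'>0$; both arguments are sound.
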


The proof is postponed to Section \ref{pf:prop_df}. This result sheds new light on phase transition phenomena in spiked models. It shows that the population spikes $s_j$ are ``above the phase transition'', if and only if they create an isolated point mass in the weak derivative. We find this explanation illuminating.

\subsubsection{Linear dependence on the alternative}

In this section we show that the optimal LSS depends \emph{linearly} on the alternative distribution. In this section we will fix $H$ and $G_0$, and will vary $G_1$. Following Theorem \ref{thm:lss_part2}, we will call $\varphi\in \mathcal{W(\I)}$ optimal for testing $H$, $G_0$ against $G_1$, with constant $\eta>0$, if it solves $K(\varphi') = -\eta\, \Delta_\gamma(H,G_1,G_0)$, where $\Delta_\gamma(H,G_1,G_0)=\Delta$ is the distribution function of the difference of weak derivatives. We will need to keep track of the constant $\eta$ in showing linearity. 

\begin{corollary}[Linearity of optimal LSS] Consider a fixed null hypothesis specified by $H$ and $G_0$. Suppose $\varphi_i$ are optimal for testing against the probability measures $G_i$ with constants $\eta_i>0$, for all $i=1,\ldots, M$. Then for any $a_i>0$, $\sum_{i=1}^{M}a_i \varphi_i$ is optimal for testing against $\smash{G = \sum_{i=1}^{M} a_i \eta_i G_i/}$ $\smash{(\sum_{i=1}^{M} a_i \eta_i)}$ (with constant $\sum_{i=1}^{M} a_i \eta_i$). 
\end{corollary}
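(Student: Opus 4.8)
The plan is to exploit the linearity of both the weak-derivative map $\dfg(H,\cdot)$ in its second argument and the integral operator $K$, together with the explicit form of the optimal LSS equation \eqref{opt_lss}. First I would record that, by the definition \eqref{dfg} and the fact (from Theorem \ref{weak_der}) that $\dfg(H,G)$ is well-defined, the map $G\mapsto \dfg(H,G)$ is affine: for probability measures $G,G'$ and $t\in[0,1]$, $\dfg(H,tG+(1-t)G') = t\,\dfg(H,G) + (1-t)\,\dfg(H,G')$, and consequently the difference-of-derivatives functional $\Delta_\gamma(H,G,G_0) = \dfg(H,G) - \dfg(H,G_0)$ satisfies, for any convex combination $G = \sum_i \mu_i G_i$ with $\mu_i>0$, $\sum_i\mu_i = 1$, the identity $\Delta_\gamma(H,G,G_0) = \sum_i \mu_i\,\Delta_\gamma(H,G_i,G_0)$. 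This is the one genuine input; everything else is bookkeeping.

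Next I would set $\eta := \sum_{i=1}^M a_i\eta_i$ and $\mu_i := a_i\eta_i/\eta$, so that $\mu_i>0$ and $\sum_i\mu_i=1$, and $G := \sum_i \mu_i G_i$ is exactly the stated target alternative $\sum_i a_i\eta_i G_i/(\sum_i a_i\eta_i)$. By hypothesis each $\varphi_i$ solves $K(\varphi_i') = -\eta_i\,\Delta_\gamma(H,G_i,G_0)$. Applying the linearity of $K$ to $\psi := \sum_i a_i\varphi_i$, whose derivative is $\psi' = \sum_i a_i\varphi_i'$ (with $\psi'\in L^2(\I)$ since each $\varphi_i'\in L^2(\I)$), I get
\begin{equation*}
K(\psi') = \sum_{i=1}^M a_i\,K(\varphi_i') = -\sum_{i=1}^M a_i\eta_i\,\Delta_\gamma(H,G_i,G_0) = -\eta\sum_{i=1}^M \mu_i\,\Delta_\gamma(H,G_i,G_0) = -\eta\,\Delta_\gamma(H,G,G_0),
\end{equation*}
where the last equality uses the affine identity from the first paragraph. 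Thus $\psi$ solves the optimal LSS equation for testing $H,G_0$ against $G$ with constant $\eta$, which by Theorem \ref{thm:lss_part2} means $\psi$ is optimal for that problem. One should also note $a_i>0$ guarantees we stay in the regime where the sign conventions match, and that $\psi\in\mathcal{W}(\I)$ since $\mathcal{W}(\I)$ is a vector space.

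There is essentially no serious obstacle here; the corollary is a direct consequence of two linearities. The only point requiring a little care is the affine behavior of $\dfg(H,\cdot)$: one must check that the weak limit defining $\dfg$ commutes with convex combinations in the second slot, which follows because $\fg((1-\ep)H + \ep(tG+(1-t)G')) = \fg((1-\ep t)H + \ep t G)$ viewed appropriately — more cleanly, one substitutes into \eqref{dfg} and uses that $(1-\ep)H + \ep\sum_i\mu_i G_i = \sum_i\mu_i[(1-\ep)H+\ep G_i]$ only formally, so the cleanest route is to verify directly from the integral representation of $\dfg$ established in the proof of Theorem \ref{weak_der} (or Proposition \ref{prop_df}) that it is additive over point masses and scales linearly with weights, which is exactly what Proposition \ref{prop_df} already exhibits (point masses $\gamma u_j$ and a density that depends linearly on the mixture). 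I would cite that structural description rather than re-derive it. The remaining step, matching the normalization constant, is the reason the statement is phrased with the specific weights $a_i\eta_i/(\sum a_i\eta_i)$, and falls out of the computation above with no extra work.
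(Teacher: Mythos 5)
Your proposal is correct and follows the same route as the paper, which proves the corollary in one line by combining the linearity of $\dfg(H,\cdot)$ in its second argument with the linearity of the operator $K$ in the optimal LSS equation; your displayed computation simply makes that explicit. The only unnecessary detour is your final paragraph's worry about re-deriving the affine behavior of $\dfg(H,\cdot)$: this is already stated (and proved via the Stieltjes-transform formula \eqref{weak_st}) as Claim~2 of Theorem~\ref{weak_der}, so you can cite it directly rather than appealing to Proposition~\ref{prop_df}.
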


\begin{proof}
This follows from the linearity of the weak derivative $\dfg(H,\cdot)$ in the second variable (Theorem \ref{weak_der}), and the linearity of the optimal LSS equation (Theorem \ref{thm:lss_part2}).
\end{proof}
This corollary implies that we can build up optimal LSS for complicated alternative hypotheses from simple ones. For instance, we saw numerically that the OMH LSS $f(x,t) = -\log(z(t)-x)$ with $z(t) = t[1 + \gamma/(t-1)]$ is optimal for $H=\delta_1$ against $\delta_t$ with subcritical $t$. It may be possible to use this to find analytically the optimal LSS against more complicated distributions. 

\subsection{Sphericity tests---PCA with unknown scale}
\label{sphericity}

Our entire framework can be extended to sphericity tests, which allow for an unknown scale parameter in PCA. Classically this corresponds to the composite null hypothesis $\Sigma_p = \sigma^2 I_p$, for some unknown $\sigma^2>0$. When studying PCA, the alternative hypothesis of interest is $\smash{\Sigma_p =}$ $\smash{ \sigma^2 (I_p+ \sum_{j=1}^{k} h_j v_j v_j^{\top})}$, for orthonormal $v_j$. We will study the natural generalization of the nonparametric spiked model where the $p$-th problem is
\begin{align}
\label{null_s}
H_{p,0}:&\, H_p = \sigma^2 [(1-hp^{-1})H+h\,p^{-1}G_0] \textnormal{ for some } \sigma^2>0,\\
\label{altve_s}
H_{p,1}:&\, H_p = \sigma^2[(1-hp^{-1})H+h\,p^{-1}G_1] \textnormal{ for some } \sigma^2>0.
\end{align}
Here $H,G_0$ and $G_1$ are probability measures and the integer $h>0$ is the local parameter, with same properties as in the previous sections. When $H = \delta_1$, $G_0=\delta_1$, $h=k$, and $\smash{G_1 = k^{-1}\sum_{j=1}^k \delta_{h_j+1}}$, this recovers the classical setup. 

The null and alternative are both invariant with respect to orthogonal rotations and scaling. It is reasonable to consider tests based on the set of standardized eigenvalues $\lambda_i/\hsigma^2$ of the sample covariance matrix, with $\hsigma^2 =\hsigma_{p}^2 = p^{-1}\tr\hSigma$. With Gaussian data, and when $H = \delta_1$, $G_0 = \delta_1$, they form a set of maximal invariants with respect to rotations and scaling. Moreover, the standardized eigenvalues are distribution-free---or pivotal---under the null. Therefore, we consider linear \emph{standardized} spectral statistics (LS$^3$), which we define as $\smash{S_p(\varphi) = \tr(\varphi(\hSigma/\hsigma^2))}$ = $\smash{\sum_{i=1}^{p} \varphi(\lambda_i/\hsigma^2)}$. This is a broad class of statistics, and many of the existing tests of sphericity are special cases (see Section \ref{lss_examples}). 

Our goal will be to find the optimal LS$^3$. We first establish their asymptotic distribution. We assume the same model as in Section \ref{main_res}. We consider smooth functions $ \varphi \in \mathcal{H}(\I/m_1)$, where $m_1 = \int x dH(x)$. This is because the eigenvalues $\lambda_i$ still belong to the compact interval $\I$ almost surely and---as we will see in the proofs---$\hsigma^2 \to m_1>0$ almost surely. We will use the notation $ \mathcal{F}_{\gamma_p}(g(x))=\int g(x) d\mathcal{F}_{\gamma_p}(x)$ for the integral of a function $g$ under $\mathcal{F}_{\gamma_p}(H)$.

\begin{lemma} [CLT for LS$^3$]
\label{ls3}
For $\varphi\in \mathcal{H}(\I/m_1)$, under the null and alternative \eqref{null_s}, \eqref{altve_s} the linear standardized spectral statistics $S_p(\varphi)$ are asymptotically normal. There is a sequence of constants $c_p$ such that under $H_{p,0}$, $S_p(\varphi)-c_p \Rightarrow \mathcal{N}(0,\sigma_{\varphi ,s}^2)$, while under $H_{p,1}$, $S_p(\varphi)-c_p \Rightarrow \mathcal{N}(\mu_{\varphi ,s},\sigma_{\varphi ,s}^2)$, for a mean shift $\mu_{\varphi ,s}$ and variance $\sigma_{\varphi ,s}^2$. The mean shift and variance are \emph{the same} as those in the asymptotic distribution of the LSS $T_p(j)$, where $j \in  \mathcal{H}(\I)$ is defined by
\begin{equation}
\label{equi_lss}
j(x) = \varphi\left(\frac{x}{m_1}\right)  -\frac{x}{m_1} \mathcal{F}_{\gamma}\left(\frac{x}{m_1} \varphi'\left( \frac{x}{m_1}\right)\right).
\end{equation}
\end{lemma}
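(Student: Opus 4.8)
The plan is to reduce the statement about the standardized statistic $S_p(\varphi)$ to the already-available CLT for ordinary LSS by a Taylor/delta-method argument around the almost-sure limit $\hsigma^2 \to m_1$. First I would record the basic facts I need: $\hsigma^2 = p^{-1}\tr\hSigma = p^{-1}\sum_i \lambda_i = \int x\, d F_p(x)$, and since $F_p \Rightarrow \mathcal{F}_\gamma(H)$ almost surely with uniformly bounded support (the eigenvalues lie in $\I$ a.s.\ for large $p$), one gets $\hsigma^2 \to \int x\, d\mathcal{F}_\gamma(H)(x)$. Under the null \eqref{null_s} this limit is $\sigma^2 m_1$; but because the entire problem is scale-invariant and because LS$^3$ only ever sees $\lambda_i/\hsigma^2$, I can WLOG set $\sigma^2 = 1$, so $\hsigma^2 \to m_1$. (The spikes contribute $O(h/p)$ to $\hsigma^2$ and wash out.) Moreover $\hsigma^2$ is itself the linear spectral statistic $T_p(\mathrm{id})/p$, hence by the Bai--Silverstein CLT it fluctuates at scale $p^{-1}$ around its deterministic equivalent; this $p^{-1/2}$-type control is what makes the linearization legitimate.

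Next I would linearize. Write $S_p(\varphi) = \sum_i \varphi(\lambda_i/\hsigma^2)$ and expand each summand around the deterministic point $m_1$, i.e.\ set $\hsigma^2 = m_1(1+\delta_p)$ with $\delta_p = o_P(1)$ (in fact $\delta_p = O_P(p^{-1})$ after centering, using that $\hsigma^2$ is an LSS). A first-order expansion in $\delta_p$ gives
\begin{equation}
\label{eq:taylor}
\varphi\!\left(\frac{\lambda_i}{\hsigma^2}\right) = \varphi\!\left(\frac{\lambda_i}{m_1}\right) - \delta_p\,\frac{\lambda_i}{m_1}\,\varphi'\!\left(\frac{\lambda_i}{m_1}\right) + R_i,
\end{equation}
with remainder $R_i$ controlled by $\varphi''$ and $\delta_p^2$. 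Summing over $i$, the leading term is $T_p(\varphi(\cdot/m_1))$, while the middle term is $\delta_p\cdot T_p\big(x\varphi'(x/m_1)/m_1\big)$; here I replace the random LSS $T_p(x\varphi'(x/m_1)/m_1)$ by $p$ times its deterministic equivalent $\mathcal{F}_\gamma\big((x/m_1)\varphi'(x/m_1)\big)$, incurring only an $O_P(1)$ error which, when multiplied by $\delta_p = O_P(p^{-1})$, is negligible. Crucially $p\,\delta_p = p(\hsigma^2 - m_1)/m_1 = \big(T_p(\mathrm{id}) - p\,m_1\big)/m_1$ is exactly the centered LSS of the identity function, which is $O_P(1)$ and asymptotically normal. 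So $S_p(\varphi)$, after centering, equals (up to $o_P(1)$) the centered version of
\begin{equation}
\label{eq:combo}
T_p\!\left(\varphi\!\left(\tfrac{\cdot}{m_1}\right)\right) - \frac{1}{m_1}\,\mathcal{F}_\gamma\!\left(\tfrac{x}{m_1}\varphi'\!\left(\tfrac{x}{m_1}\right)\right)\cdot T_p(\mathrm{id}),
\end{equation}
which is a single LSS with weight function $j(x)$ as in \eqref{equi_lss}, up to an additive deterministic constant absorbed into $c_p$. Applying Theorem \ref{thm:lss} to $j \in \mathcal{H}(\I)$ then yields the asymptotic normality with mean $\mu_{j} = \mu_{\varphi,s}$ and variance $\sigma^2_{j} = \sigma^2_{\varphi,s}$, and the mean/variance formulas \eqref{mu_f}--\eqref{sigma_f} applied to $j$ identify them explicitly. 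Note that the additive correction term $-(x/m_1)\mathcal{F}_\gamma(\cdots)$ inside $j$ is \emph{linear} in $x$, so its derivative is constant; this is why the correction shows up cleanly when one computes $j'$ for the mean and variance integrals.

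The main obstacle is making the linearization \eqref{eq:taylor} rigorous in the joint limit — in particular controlling the remainder $\sum_i R_i$ uniformly. The issue is that $\sum_i R_i$ involves $\delta_p^2 \sum_i (\text{bounded}) = O_P(p\,\delta_p^2) = O_P(p^{-1})$ provided $\varphi''$ is bounded on a neighborhood of $\I/m_1$ that a.s.\ contains all $\lambda_i/\hsigma^2$ for large $p$; the analyticity of $\varphi$ on an open set containing $\I/m_1$ plus the a.s.\ eventual containment $\lambda_i/\hsigma^2 \in \I/m_1$ (which itself needs the a.s.\ convergence $\hsigma^2 \to m_1$ together with $\lambda_i \in \I$) supplies exactly this. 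A secondary technical point is the replacement of $T_p(x\varphi'(x/m_1)/m_1)$ by its deterministic equivalent: one must check this LSS is $O_P(1)$ (again Bai--Silverstein, or just that its expectation is $p\,\mathcal{F}_{\gamma_p}(\cdots) + O(1)$ and its variance is $O(1)$), so that multiplying by $\delta_p$ kills it. Finally one should verify that the joint asymptotic behavior of the pair $\big(T_p(\varphi(\cdot/m_1)),\,T_p(\mathrm{id})\big)$ is governed by the bivariate CLT of Bai--Silverstein so that the linear combination \eqref{eq:combo} is a single asymptotically normal LSS — but this is automatic since LSS of a sum of weight functions is the LSS of the summed weight function, so no genuinely new CLT is needed beyond Theorem \ref{thm:lss} applied to $j$.
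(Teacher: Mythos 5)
Your proposal is correct and follows essentially the same route as the paper: almost-sure convergence $\hsigma^2\to m_1$ with $O_P(p^{-1})$ fluctuation, a first-order Taylor expansion of each $\varphi(\lambda_i/\hsigma^2)$ around $\lambda_i/m_1$, replacement of the LSS multiplying that fluctuation by its deterministic equivalent, and identification of the result with $T_p(j)+d_p+o_P(1)$. Your packaging of the conclusion as a single application of Theorem \ref{thm:lss} to $j$ is a slightly cleaner bookkeeping of the mean shift (which the paper tracks separately via the $p(m_{\gamma_p}-m_1)$ term), but it is the same argument.
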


The lemma, proved in Section \ref{pf:equi_lss}, states that the LS$^3$ for $\varphi$ and the LSS for $j$ are asymptotically equivalent. Hence we will find the optimal LS$^3$ by optimizing over LSS of the form \eqref{equi_lss}. By scale invariance, we can restrict to working with $\sigma^2=1$, which implies $m_1=1$. 

First we characterize the LSS that are of the required form $j(x) = \varphi(x) - x \fg(x\varphi'(x))$. We claim that 
a function $j\in\mathcal{H}(\I)$ is of this form if and only if $\fg(xj'(x))=0$. Indeed, if $j$ has this form, then $\fg(xj'(x)) = \fg[x\varphi'(x) - \fg(x\varphi'(x))] = 0$. On the other hand, if  $\fg(xj'(x)) =0$, then by taking $f=j$, clearly $j$ is of the required form, as the second term cancels. 

Therefore, we optimize the efficacy from \eqref{opt_lss_x} over the function class $\mathcal{H}_0(\I) = \{\varphi\in \mathcal{H}(\I): \fg(x\varphi'(x))=0\}$. The constraint $\fg(x\varphi'(x))=0$ is a linear equation $\la g,D\ra=0$ for the derivative $g=\varphi'$, with $D = x d\fg(x) \in L^2(\I)$. $D$ is an $L^2$ function, because $\fg$ has a continuous density except at 0, while the $x$ term is null at 0. From the previous sections, it follows that the efficacy optimization over a space $\mathcal{X}$ can be written in terms of $g = \varphi'$ as 
\begin{equation*}
\label{opt_ls3_problem}
\sup_{g\in\mathcal{X}} -h  \frac{\la g,\Delta\ra}{\la g,Kg\ra^{1/2}} \textnormal{ s.t. }\la g,D\ra=0 .
\end{equation*}
As in the previous section, at first we will optimize over $g \in L^2(\I)$, and then extend to analytic functions.  Consider the projection operator $P$ into the orthogonal complement of the one-dimensional space spanned by $D$: $Pg = g -  D\la g,D\ra/\|D\|^2$. Optimizing subject to the linear constraint is equivalent to optimizing over the set $g\in \textnormal{Im}(P)$---or with $g=Pl$ to solving the problem
\begin{equation*}
\sup_{l\in L^2(\I)} -h  \frac{\la Pl,\Delta\ra}{\la Pl,KPl\ra^{1/2}}.
\end{equation*}

Denoting $\Delta_1 = PL$ and $K_1 = PKP$, this reduces to the type of optimization problem solved previously (see \eqref{opt_lss_x}). 
Putting this together with Lemma \ref{ls3} and the analogue of Theorem \ref{thm:lss_part3} for LS$^3$---whose statement and proof is omitted due to its similarity to Theorem \ref{thm:lss_part3}---we obtain the power of LS$^3$.

We consider tests that reject the null if $\smash{S_p(\varphi)-c_p\notin [t_{\varphi}^-,t_{\varphi}^+]}$ for some function-dependent constants $\smash{t_{\varphi}^-,t_{\varphi}^+}$. By scale-invariance it is enough to consider $\sigma^2=1$. In this case we denote the $p$-th null and alternative distribution as $H_{p,0}$ and $H_{p,1}$, respectively. The maximal asymptotic power of LS$^3$ is
$$
\beta_s = \sup_{\varphi\in \mathcal{H}(\I),\,\, t_{\varphi}^-<t_{\varphi}^+} \,\,\, \lim_{p\to\infty} \mathbb{P}_{H_{p,1}}\left(S_p(\varphi) \notin [t_{\varphi}^-,t_{\varphi}^+])\right).
$$

\begin{theorem}[Asymptotic power of LS$^3$] 
Consider scale-invariant tests for detecting weak PCs based on linear standardized spectral statistics $S_p(\varphi)$. Suppose $\varphi\in \mathcal{H}(\I)$ and the tests have asymptotic level $\alpha\in(0,1)$. 
The maximal asymptotic power is
$$
\beta_s=
\left\{
	\begin{array}{ll}
		\Phi\left(z_{\alpha}+h \, \la K_1^+\Delta_1,\Delta_1\ra^{1/2}\right) & \mbox{\, if \, } \Delta_1 \in \textnormal{Im}(K_1), \\
		1 & \mbox{\, if \, } \Delta_1\notin\textnormal{Im}(K_1).\\
	\end{array}
\right.
$$
\end{theorem}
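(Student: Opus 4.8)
The plan is to reduce the optimization over linear standardized spectral statistics to the optimization over ordinary LSS already solved in Theorems \ref{thm:lss}--\ref{pow_lss}, carried out over the \emph{constrained} analytic class $\mathcal{H}_0(\I)=\{\varphi\in\mathcal{H}(\I):\fg(x\varphi'(x))=0\}$. By scale invariance it suffices to treat $\sigma^2=1$, so $m_1=1$. By Lemma \ref{ls3}, for each $\varphi\in\mathcal{H}(\I)$ the statistic $S_p(\varphi)$ is asymptotically equivalent to the LSS $T_p(j)$ with $j$ as in \eqref{equi_lss}, and, as established in the text preceding this theorem, the functions $j\in\mathcal{H}(\I)$ representable in the form \eqref{equi_lss} are exactly the elements of $\mathcal{H}_0(\I)$. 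Hence the maximal asymptotic power $\beta_s$ of two-sided LS$^3$ tests coincides with the maximal asymptotic power of two-sided tests based on $T_p(j)$ with $j$ ranging over $\mathcal{H}_0(\I)$, and it remains to solve this constrained problem and read off the power.

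Next I would cast the constrained efficacy problem in Hilbert-space form. Writing $g=j'$, Theorem \ref{thm:lss} gives $\mu_j=-h\la g,\Delta\ra$ and $\sigma_j^2=\la g,Kg\ra$, so the efficacy is $\theta(j)=-h\la g,\Delta\ra/\la g,Kg\ra^{1/2}$, while the membership $j\in\mathcal{H}_0(\I)$ is the single linear side condition $\la g,D\ra=0$ with $D=x\,d\fg(x)\in L^2(\I)$, i.e. $D(x)$ is $x$ times the continuous density of $\fg(H)$ (the factor $x$ absorbs any atom or blow-up at $0$, which is why $D\in L^2$). Following the route sketched in the text, I introduce the orthogonal projection $P$ onto $\{D\}^{\perp}$, substitute $g=Pl$, and use self-adjointness of $P$ to rewrite the objective as $-h\la l,P\Delta\ra/\la l,PKPl\ra^{1/2}$. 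With $\Delta_1=P\Delta$ (the projection of the weak-derivative difference) and $K_1=PKP$---again a compact, self-adjoint, nonnegative operator on $L^2(\I)$ because $K$ is---this is precisely the unconstrained efficacy-maximization problem of Theorem \ref{thm:lss_part2} with $(K,\Delta)$ replaced by $(K_1,\Delta_1)$. Applying that theorem verbatim yields the dichotomy: if $\Delta_1\in\textnormal{Im}(K_1)$ the supremum of the efficacy over the constrained $\mathcal{W}(\I)$-type functions equals $h\la K_1^{+}\Delta_1,\Delta_1\ra^{1/2}<\infty$ and is attained by a solution of $K_1g=-\eta\Delta_1$; otherwise the supremum is $+\infty$, witnessed by a sequence $g_n$ with $K_1g_n\to 0$ but $\la g_n,\Delta_1\ra$ bounded away from $0$.

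The last step is to pass from $L^2(\I)$ (equivalently $\mathcal{W}(\I)$) back to analytic functions and convert efficacy into power, exactly as in the passage from Theorem \ref{thm:lss_part2} to Theorem \ref{pow_lss}; this is the content of the LS$^3$ analogue of Theorem \ref{thm:lss_part3} referenced in the text. Concretely, given the Hilbert-space optimizer---or, in the divergent case, the witnessing sequence $g_n$---one approximates it by polynomials, applies $P$ to restore the constraint, and absorbs the resulting smooth (hence still admissible) correction term, using continuity of $g\mapsto\la g,\Delta\ra$ and $g\mapsto\la g,Kg\ra$ on $L^2(\I)$ to keep the efficacy ratio arbitrarily close; the corrected functions are antiderivatives of analytic functions in $\mathcal{H}_0(\I)$, so the supremum of $\theta(j)$ over $j\in\mathcal{H}_0(\I)$ equals the value found above. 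Finally, for each such $j$ the asymptotically optimal level-$\alpha$ rejection region degenerates to the one-sided one obtained by letting $t^-_\varphi\to-\infty$ after fixing the sign of $\varphi$, with power $\Phi(z_\alpha+|\theta(j)|)+o(1)$ (and power tending to $1$ when $\theta(j)=+\infty$); taking the supremum over $j$ and inserting $\sup|\theta(j)|=h\la K_1^{+}\Delta_1,\Delta_1\ra^{1/2}$ (resp. $+\infty$) gives the stated formula. I expect the constrained analytic-approximation step to be the main obstacle: one must simultaneously preserve the linear side condition $\fg(x\varphi'(x))=0$ and not degrade the efficacy in the limit, and in the case $\Delta_1\notin\textnormal{Im}(K_1)$ one must exhibit an explicit sequence of \emph{constrained} analytic functions along which the efficacy diverges. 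Once these approximation lemmas are in hand---they are transcriptions of the arguments behind Theorems \ref{thm:lss_part2}--\ref{pow_lss}---the remainder is bookkeeping.
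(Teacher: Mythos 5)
Your proposal is correct and follows essentially the same route as the paper: reduce $S_p(\varphi)$ to the LSS $T_p(j)$ via Lemma \ref{ls3}, identify the admissible $j$ with the constrained class $\mathcal{H}_0(\I)$ via the linear side condition $\la g,D\ra=0$, project onto $\{D\}^{\perp}$ to obtain the unconstrained problem for $(K_1,\Delta_1)=(PKP,P\Delta)$, and then invoke the machinery of Theorems \ref{thm:lss_part2}--\ref{pow_lss}. Your explicit attention to the constrained analytic-approximation step is in fact more careful than the paper, which omits the LS$^3$ analogue of Lemma \ref{thm:lss_part3} on grounds of similarity.
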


This theorem quantifies the loss of power due to restricting to scale-invariant LS$^3$ from LSS. If $\Delta,D \in  \textnormal{Im}(K)$, it can be checked that $\Delta_1 \in \textnormal{Im}(K_1)$. Moreover, the efficacy is $\theta_s = h[\la K^+L,\Delta\ra - \la K^+L,D\ra^2/\la K^+D,D\ra]^{1/2}$. This shows that the efficacy is reduced from $\theta = h\la K^+L,\Delta\ra^{1/2}$, and the power loss depends on the ``correlation'' between $\Delta$ and $D$ with respect to $K$.

\subsection{Related work}
\label{rel_work}

We now turn to discussing some related work. As described in the introduction, there are two main lines of inquiry on testing in high-dimensional PCA that are related to our results. The first line connects PCA to tests of sphericity against low-rank alternatives. Classical tests of sphericity, designed for general alternatives, are a topic of renewed interest in a high-dimensional context \citep{ledoit2002some,bai2009corrections, cai2013optimal,li2015testing}, reviewed also in \cite[][Sec. 5.4]{cai2014estimating}. Some works, such as \cite{wang2013sphericity, wang2014note,choi2015regularized} study classical tests under spiked alternatives. 

As already discussed, in the special case of Gaussian data in white noise, \cite{onatski2013asymptotic,onatski2014signal} have recently discovered that subcritical PCs can be detected with nontrivial power. Their LR tests are asymptotically equivalent to LSS. This has been extended to $F$-matrices and a few other explicitly solved examples \citep{dharmawansa2014local,johnstone2015testing}.
 Their interesting likelihood approach does not generalize obviously to our setting. Nonetheless, we showed that their surprising discovery is true much more generally. Even more strikingly, we numerically recovered their methods as a special case (see Section \ref{ex_plots}). Their approach has the advantage of leading to more explicit formulas. Our approach has the advantage of generality.  We view these results as complementary. 

The second line of work studies stronger principal components, and allows for correlations in the residuals. In the econometrics literature the presence of correlated background noise---or idiosyncratic noise---is a key concern in the related area of factor models. However, most of that work assumes strong factors \citep{bai2008large,onatski2009testing,ahn2013eigenvalue}. In that case the signal eigenvalues are asymptotically unbounded under the alternative, and thus easier to detect. A similar separation holds for weaker signals ``above the phase transition''. The signal eigenvalues asymptotically separate from the bulk, and can be detected with full power via a simple test \citep{paul2007asymptotics,nadakuditi2008sample, kritchman2009non,bianchi2011performance}. 

Finally, there are many other important results on PCA in high dimensions that we cannot review due to space limitations. The second-order asymptotics of eigenvalues have been studied starting with \cite{paul2007asymptotics}. The finite-sample, and low-noise, asymptotics have been described in \cite{nadler2008finite}. The \emph{estimation} of the number of PCs has also been analyzed above the phase transition \citep{bai2012estimation,onatski2012asymptotics}. There are many other interesting topics---sparse PCA, computation-statistics tradeoffs, kernel PCA, non-linear dimension reduction, etc.---that we do not have space to go into.

\section{Linearization of tests: A unifying framework}
\label{lss_examples}

\subsection{A general approach to non-linear spectral statistics}

Tests for the covariance matrix are a core topic in multivariate statistics, discussed in many textbooks \citep{anderson1958introduction, yao2015large}. In this section we will analyze many existing tests in a unified way. We show that these tests are asymptotically equivalent to LSS in our nonparametric model, going beyond sphericity. Therefore, the existing tests can be compared in the same general framework. 

Linearization techniques like the delta-method are of course well-known in asymptotic statistics. In the specific context of high-dimensional sphericity tests, they have been used by \cite{ledoit2002some} and several later works. However our results are much more general, because the asymptotic equivalence to LSS holds in the nonparametric spiked model with \emph{any} distribution of PC variances, not just under the standard null $H=\delta_1$, as in \cite{ledoit2002some} and later works such as \cite{bai2009corrections}. The recent papers of \citet{wang2013sphericity, wang2014note} still consider the standard null, but also compute power under some alternatives. 

Another key difference is that we are not interested in obtaining the limiting mean and variance parameters of the test statistics. A great deal of work---usually contour integral calculations---usually goes into finding these parameters \emph{explicitly}. In contrast, we simply reduce the test statistics to LSS. Then the mean and variance can be computed \emph{numerically} using the computational framework of \cite{dobriban2015efficient}. Accurate numerical methods may suffice in many applications.

The first component of our theory is the following lemma, proved later in Section \ref{pf:linearize} using the CLT for the LSS of \cite{bai2004clt}. This is essentially the delta method of classical statistics, as it arises here. 

\begin{lemma}[Linearization of Spectral Statistics] 
\label{linearize}
Let  $T_p(\varphi)$ and $T_p(\psi)$ be two linear spectral statistics with $\varphi,\psi \in \mathcal{H}(\I)$, and $y$ be a real-valued bivariate function continuously differentiable in a neighborhood of $a = (\fg(\varphi),\fg(\psi))$. Let $j = \partial_1y(a)\cdot \varphi + \partial_2 y(a)\cdot \psi \in \mathcal{H}(\I)$ and suppose the LSS $T_p(j)$ has a nonzero asymptotic variance $\smash{\sigma_j^2>0}$. Then the \emph{non-linear} spectral statistic $Y_p = y(p^{-1}T_p(\varphi), p^{-1} T_p(\psi))$ is asymptotically equivalent to the LSS $T_p(j)$. Specifically, there is a sequence of deterministic constants $d_p$ such that $p \cdot Y_p= T_p(j) + d_p + o_P(1)$ under the nonparametric spiked model \eqref{null_s}, \eqref{altve_s}.
\end{lemma}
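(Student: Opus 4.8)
The plan is to apply the multivariate CLT for linear spectral statistics of \cite{bai2004clt} to the pair $(T_p(\varphi), T_p(\psi))$ and then compose with the smooth function $y$ via a first-order Taylor expansion, i.e., the delta method, while carefully tracking which terms contribute at the relevant scale. First I would recall that by the CLT for LSS, the centered vector
\begin{equation*}
\left( T_p(\varphi) - p\,\mathcal{F}_{\gamma_p}(\varphi),\ T_p(\psi) - p\,\mathcal{F}_{\gamma_p}(\psi) \right) \Rightarrow \mathcal{N}(m, C)
\end{equation*}
for a mean vector $m$ and covariance matrix $C$ determined by $\varphi,\psi,\gamma,H$ (with the kernel \eqref{kernel} giving the covariance entries via $\int\int \varphi'(x)\psi'(y) k(x,y)\,dx\,dy$ and similarly for the diagonal terms). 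In particular $p^{-1}T_p(\varphi) = \mathcal{F}_{\gamma_p}(\varphi) + O_P(p^{-1})$, so $(p^{-1}T_p(\varphi), p^{-1}T_p(\psi)) \to a = (\mathcal{F}_\gamma(\varphi), \mathcal{F}_\gamma(\psi))$ in probability. Since $y$ is continuously differentiable in a neighborhood of $a$, I can Taylor expand:
\begin{equation*}
Y_p = y(a_p) + \partial_1 y(a_p)\big(p^{-1}T_p(\varphi) - \mathcal{F}_{\gamma_p}(\varphi)\big) + \partial_2 y(a_p)\big(p^{-1}T_p(\psi) - \mathcal{F}_{\gamma_p}(\psi)\big) + o_P(p^{-1}),
\end{equation*}
where $a_p = (\mathcal{F}_{\gamma_p}(\varphi), \mathcal{F}_{\gamma_p}(\psi))$, using that the fluctuations of $p^{-1}T_p$ around $a_p$ are $O_P(p^{-1})$ and the remainder in Taylor's theorem is quadratic in these fluctuations, hence $o_P(p^{-1})$.

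Multiplying through by $p$ gives $p\,Y_p = p\,y(a_p) + \partial_1 y(a_p)\,\widetilde T_p(\varphi) + \partial_2 y(a_p)\,\widetilde T_p(\psi) + o_P(1)$, where $\widetilde T_p(\varphi) = T_p(\varphi) - p\,\mathcal{F}_{\gamma_p}(\varphi)$. The next step is to replace $\partial_i y(a_p)$ by $\partial_i y(a)$: since $a_p \to a$ and the $\widetilde T_p$ are $O_P(1)$, we have $(\partial_i y(a_p) - \partial_i y(a))\widetilde T_p(\cdot) = o_P(1)$. By linearity of the LSS map $\varphi \mapsto T_p(\varphi)$, the combination $\partial_1 y(a) \widetilde T_p(\varphi) + \partial_2 y(a) \widetilde T_p(\psi)$ equals $\widetilde T_p(j)$ with $j = \partial_1 y(a)\varphi + \partial_2 y(a)\psi$, which lies in $\mathcal{H}(\I)$ as a linear combination of functions in $\mathcal{H}(\I)$. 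Setting $d_p := p\,y(a_p) + p\,\mathcal{F}_{\gamma_p}(j) = p\,y(a_p) + \partial_1 y(a)\,p\,\mathcal{F}_{\gamma_p}(\varphi) + \partial_2 y(a)\,p\,\mathcal{F}_{\gamma_p}(\psi)$ — a deterministic sequence — we get $p\,Y_p = T_p(j) + d_p + o_P(1)$, which is the claimed asymptotic equivalence. The hypothesis $\sigma_j^2 > 0$ ensures the limiting statistic is nondegenerate so that ``asymptotically equivalent'' is meaningful (the $o_P(1)$ is negligible relative to the $O_P(1)$ fluctuations of $T_p(j)$).

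The one technical subtlety — and the main thing to be careful about — is that the joint CLT for $(T_p(\varphi),T_p(\psi))$ and the expansion must hold \emph{both} under the null \eqref{null_s} and under the alternative \eqref{altve_s}; since the alternative differs from the null only in $h$ of the $p$ eigenvalues and $h$ is fixed, the CLT of \cite{bai2004clt} applies in both regimes (the limiting $H$, $\gamma$ and hence the kernel are unchanged, only the centering $p\,\mathcal{F}_{\gamma_p}(H_p)$ shifts, which is absorbed into $d_p$ together with the mean shift from Theorem \ref{thm:lss}). I would also note that the constants $d_p$ are the same sequence under null and alternative only up to the $\mathcal{F}_{\gamma_p}(H_p)$-dependent centering; to be safe one states the conclusion with a single $d_p$ defined via the null centering and lets the difference contribute to the mean shift $\mu$, exactly as in Theorem \ref{thm:lss}. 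Beyond this bookkeeping, the argument is a routine delta-method computation, so there is no serious obstacle; the value of the lemma is conceptual — it reduces the analysis of any such smooth functional of a small number of LSS to the already-understood linear case.
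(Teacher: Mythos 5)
Your proposal is correct and follows essentially the same route as the paper's proof: a first-order Taylor (delta-method) expansion of $y$ around $a_p = (\mathcal{F}_{\gamma_p}(\varphi),\mathcal{F}_{\gamma_p}(\psi))$, the Bai--Silverstein CLT to get $O_P(1)$ centered fluctuations, Slutsky's theorem with $a_p \to a$ to replace $\partial_i y(a_p)$ by $\partial_i y(a)$, and linearity of $\varphi \mapsto T_p(\varphi)$ to identify the result with $T_p(j)$ plus deterministic constants $d_p$, valid under both null and alternative. One cosmetic point: since $y$ is only assumed continuously differentiable, the Taylor remainder is $o$ of the increment rather than quadratic in it, but that is exactly what the argument needs because the increment is $O_P(p^{-1})$.
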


The equivalent LSS provided by this lemma generally depends on $\sigma^2$, and thus may not be a \emph{bona fide} test statistic. Nonetheless, it can be a useful tool to compare different tests. The result also holds with the same proof for multivariate functions $y$. However, all examples of interest are at most bivariate.

\subsection{Examples}
\label{examples}

In this section we use Lemma \ref{linearize} to show that several popular tests of identity and sphericity are asymptotically equivalent to linear spectral statistics in the nonparametric spiked model. Moreover, this section also reviews related work, including tests that are LSS in the original form, and tests that are not equivalent to LSS. 

Whenever we use Lemma \ref{linearize}, we assume that the limiting variance of the equivalent LSS is positive: $\smash{\sigma_j^2>0}$. This assumption is not a significant limitation, and it can be checked directly for any example of interest.  We will use the notation $g_i(x) = x^i$ for the monomials. The proofs will be given in Section \ref{pf:examples}.

\begin{enumerate}
\item 
The normal log-LRT for testing identity, $\Sigma = I_p$ against any $\Sigma$, \cite[][Sec. 10.8]{anderson1958introduction}, equals $\tr \hSigma - \log\det\hSigma - p = T_p(\varphi_1)$  up to normalization, where $\varphi_1(x) = x - \log(x)-1$. This is a linear spectral statistic. \cite{bai2009corrections} proposed to ``correct'' this test under the null using the proper high-dimensional centering term from the CLT for LSS. 

\item  The normal log-LRT for testing sphericity, $\Sigma = \sigma^2 I_p$ with unknown $\sigma^2$ against any $\Sigma$, due to \cite{mauchly1940significance}, equals $\smash{p \log(p^{-1}\tr \hSigma) - \log\det\hSigma}$  up to normalization. Linearization by Lemma \ref{linearize} shows that it is equivalent to the LSS $T_p(\varphi_2)$, where $\varphi_2(x) = x/\fg(g_1) - \log(x/\fg(g_1))-1$.  

\citet{wang2013sphericity, wang2014note}  studied the power of this test under spiked alternatives to the null of identity, and under more general non-Gaussian moment conditions. \cite{li2015testing} introduced a quasi-LRT modification for the ultrahigh dimensional regime $p\gg n$. These interesting papers generally calculate the limiting moments of the relevant LSS \emph{explicitly}, and then use the delta method to get the final distribution. In contrast, our theory relies on a \emph{single} linearization lemma to reduce to LSS, followed by numerical computation of the moments. Accurate numerical moments may suffice in many applications. 

In general, the LSS depends on the unknown $\fg(g_1)$, which equals the unknown parameter $\sigma^2$ under the null, thus it is not a \emph{bona fide} test statistic. However, the LSS can still be used to study the power of the original statistic. For instance, under the null of identity, the LRT of sphericity is equivalent to the LRT of identity, discussed above. Indeed, if $\sigma^2=1$, then $\fg(g_1) = 1$, so $\varphi_2(x)=\varphi_1(x)$ and the two tests are asymptotically equivalent.  Tests of sphericity are designed to work under the composite null $\Sigma_p = \sigma^2 I_p$, with $\sigma^2>0$ unspecified. Due to this, they may have lower power than tests of identity. Therefore, it is perhaps remarkable that there is asymptotically \emph{no loss of efficiency} in using the LRT of sphericity.

\item  The locally optimal invariant test of identity of \cite{john1971optimal} is based on $\tr{\hSigma}$, and is clearly an LSS. The locally optimal invariant test of sphericity of \cite{john1971optimal} is based on $\smash{\tr{(a\hSigma-I_p)^2}}$, where $a = p/\tr{\hSigma}$. Lemma \ref{linearize} shows that it is equivalent to $T_p(\varphi_3)$, where $\varphi_3(x) = \fg(g_1)\cdot x^2 - 2\fg(g_2) \cdot x$.  %Here $g_i(x) = x^i$ for $i=1,2$. 

In particular, under the null of identity, it is known that $\fg(g_1) = 1$ and $\fg(g_2) = 1+\gamma$. John's test is then asymptotically equivalent to $\varphi_3(x) = x^2 - 2(1+\gamma)x$. In the special case of a standard null $H = \delta_1$, this result agrees with \cite{ledoit2002some}; but we emphasize that our asymptotic equivalence is valid under \emph{any} nonparametric null and any local alternative. 

More recently, using the same technique of explicit calculation described above, \cite{wang2013sphericity} studied this test statistic under general fourth moment conditions and under spiked alternatives for $H=\delta_1$. \cite{li2015testing} established its distribution in ultrahigh dimensions, and argued for its powerful dimension-proof property.

\cite{john1971optimal}'s locally optimal tests are similar in spirit to our approach. They were derived via local alternatives to the standard null in low dimensions, based on the explicit density of eigenvalues for Gaussian data. %It is unclear if that approach generalizes to our high-dimensional non-white non-Gaussian setting. 

\item  The test of identity from \cite{nagao1973some} is based on $\tr{(\hSigma-I_p)^2}$ . This is an LSS with $\varphi_4(x) = (x-1)^2$. \cite{ledoit2002some} proposed the modification $\tr{(\hSigma-I_p)^2} - n^{-1} (\tr{\hSigma})^2$. This was one of the first proper high-dimensional tests of identity, proven to have correct level as $n,p\to \infty$ with $p/n\to\gamma>0$. Its asymptotic distribution under the null was originally derived using an earlier CLT for LSS valid for polynomial $\varphi$. 

In our more general setting of arbitrary null and alternative, using linearization, \cite{ledoit2002some}'s proposal is asymptotically equivalent to an LSS $T_p(\varphi_5)$, with $\varphi_5(x) = x^2 - 2(1 +\gamma \fg(g_1)) x $. In particular, under the null of identity, it is asymptotically equivalent to the sphericity test from \cite{john1971optimal}. Indeed, $\fg(g_1)=1$, so $\varphi_5(x) = \varphi_3(x)$. This again recovers as a special case the results of  \cite{ledoit2002some} who showed that the two have the same limit distribution under the identity null. In our asymptotic setting, \cite{srivastava2005some}'s proposed test of identity is also equivalent to the Ledoit-Wolf test.

\item \cite{fisher2010new} proposed a sphericity test based on $p \tr{\hSigma^4}/( \tr{\hSigma^2})^2$, similar to John's test. This is asymptotically equivalent to the LSS with $\varphi_6(x) = \fg(g_2)\cdot x^4 - 2\fg(g_4) \cdot x^2$. Under the null of identity, we have $\fg(g_2) = 1 + \gamma$ and $\fg(g_4) = (1+\gamma)(1+5\gamma+\gamma^2)$, using the moments of the Marchenko-Pastur distribution \citep{bai2009spectral}. Thus the test is equivalent to the LSS with $\varphi_6(x) = x^4 - 2(\gamma^2+5\gamma+1)x^2$. 

Later \cite{fisher2012testing} proposed two tests of identity based on unbiased estimators of all moments $\tr \Sigma^i$, for $i=1,\ldots,4$, obtained by linear combinations of products of various $\tr \hSigma^i$. These are clearly equivalent to LSS for certain polynomials $\varphi_6'(x) = \sum_{i=0}^{4}c_i x^i$, but the coefficients are too complicated to derive here.

\item Recently, \cite{choi2015regularized} proposed a regularized LRT which equals $\tr{ \hSigma} - \log\det (\hSigma + \lambda I_p)$ up to constants, for some $\lambda>0$. This is an LSS with $\varphi_7(x) = x - \log (x + \lambda)$. 

\end{enumerate}

These findings are summarized in Table \ref{tab:lss_equiv}. The tests as given here can differ from the original papers by normalization. We display in red the test statistics for which our equivalent LSS are new in the nonparametric spiked model. We also show the equivalent LSS for the LRT from \cite{onatski2013asymptotic}, which is valid under a Gaussian white noise null with spiked alternatives. 

{\renewcommand{\arraystretch}{1.5}
\begin{table}[]
\centering
\caption{Existing tests of identity and sphericity. We use the notation $m_i=\fg(g_i)$. ``I'' indicates that the test was devised as a test of identity, while ``S'' indicates it was a test of sphericity. Here $z(t) = t[1 + \gamma/[t-1]]$.}
\label{tab:lss_equiv}
\resizebox{\textwidth}{!}{%
\begin{tabular}{|l|l|l|l|}
\hline
Source                           & $H_0$    & Original Form                                     & Equivalent LSS                                          \\ \hline
Folklore  - LRT                       & I  & $\tr \hSigma - \log\det\hSigma - p$               & $x - \log(x)-1$                                \\ \hline
\cite{mauchly1940significance} - LRT & S & $p \log\tr \hSigma - \log\det\hSigma$     & $\red{ x/m_1 - \log(x/m_1)-1}$                       \\ \hline
\cite{john1971optimal}         & I  & $\tr{\hSigma}$                                    & $x$                                                     \\ \hline
\cite{john1971optimal}         & S & $\tr{(p\hSigma/\tr\hSigma-I_p)^2}$                & $\red{ x^2 m_1 - 2xm_2 }$    \\ \hline
\cite{nagao1973some}           & I  & $\tr{(\hSigma-I_p)^2}$                            & $(x-1)^2$                                               \\ \hline
\cite{ledoit2002some}          & I  & $\tr{(\hSigma-I_p)^2} - (\tr{\hSigma})^2/n$ & $ \red{ x^2 - 2(1 +\gamma m_1) x}$ \\ \hline
\cite{fisher2010new}           & S & $p\tr{\hSigma^4}/(\tr{\hSigma^2})^2$ & $\red{ x^4m_2 - 2x^2m_4}$  \\ \hline
\cite{fisher2012testing}    & I & complicated & $\sum_{i=0}^{4}c_i x^i$  \\ \hline
\cite{onatski2013asymptotic}           & I & LRT & $-\log(z(t)-x)$  \\ \hline
\cite{onatski2013asymptotic}           & S & LRT & $-\log(z(t)-x) - (t-1)/\gamma x$  \\ \hline
\cite{choi2015regularized}           & I & $\tr{ \hSigma} - \log\det (\hSigma +\lambda I_p)$ & $ x - \log (x + \lambda)$  \\ \hline
\end{tabular}
}
\end{table}

\subsection{Other tests}

Not all tests of identity or sphericity are asymptotically equivalent to linear spectral statistics. Here we give some representative examples. In the 1950's Roy proposed tests based on the extreme eigenvalues of the sample covariance matrix. After work by Tracy and Widom, \cite{johnstone2001distribution} showed that the largest eigenvalue of $\smash{\hSigma}$ has an asymptotic Tracy-Widom distribution for real Gaussian white noise under high-dimensional asymptotics. Various authors have later proposed ways to use this distribution in practice, see \cite{onatski2013asymptotic} for a review. These tests are not equivalent to LSS.

\cite{chen2010tests} proposed tests of identity and sphericity inspired by \cite{ledoit2002some}, based on more accurate estimators of $\tr{\Sigma}$ and $\tr{\Sigma^2}$. Their test statistic of identity is equivalent to 
$$
\frac{\sum^{*}(x_i^\top x_j)^2+2\sum^{*}x_i^\top x_j}{P_n^2}
- \frac{2\sum^{*}x_i^\top x_j x_j^\top x_k}{P_n^3}
+ \frac{\sum^{*}x_i^\top x_j x_k^\top x_l}{P_n^4}
-\frac{2\sum x_i^\top x_i}{n} ,
$$
where $x_i$, $i=1,\ldots,n$ are the samples, $P_n^k = n!/(n-k)!$, and the summations with the $*$ symbol are over all distinct indices. In the same spirit, \cite{cai2013optimal} proposed the U-statistic $U = 2\sum_{i<j}h(x_i,x_j)/[n(n-1)]$, with $h(x,y) = (x^\top y)^2 - \|x\|^2 - \|y\|^2 +p$ for testing identity.  \cite{onatski2014signal} show that the results of \cite{cai2013optimal} imply that $U$ has equivalent power to the Ledoit-Wolf LSS for Gaussian white noise and alternatives. However, it is not clear if this equivalence holds more generally for non-Gaussian models, and whether there exists an explicit LSS such that $U-T_p(\varphi)-d_p=o_P(1)$, which is the claim we are interested in. 

\cite{li2014hypothesis} developed a test of identity based on a measure of distance of the sample ESD and the null Marchenko-Pastur law. Their test statistic is $\sum_{i=1}^{k}|1/\hv(z_i)-1/v_{\gamma_p}(z_i)|^2$ for suitable $z_i$, where $\hv(z)$ is the Stieltjes transform of the companion matrix $n^{-1}XX^\top$, and $v_{\gamma_p}$ is the companion Stieltjes transform of $\smash{\mathcal{F}_{\gamma_p}(\delta_1)}$. Due to the squared norm, this has a $\chi^2$ limit distrbution and is not equivalent to an LSS. 

Finally, there are many tests for covariance matrices based on maximum entrywise deviations; see \cite{cai2014estimating} for a review. These are generally not equivalent to LSS.

\section{Proofs}
\label{Proofs}

\subsection{Main steps of the proofs}
\label{mainproofs}
\subsubsection{Weak derivative of the Marchenko-Pastur map}
\label{sec:weak_der}

We start by explaining the main steps in proving Theorems \ref{thm:lss} (asymptotically normal equivalent) and \ref{thm:lss_part2} (optimal LSS equation). These lead to the proof of Theorem \ref{pow_lss} (asymptotic power).
Starting with Theorem \ref{thm:lss}, our assumptions imply that the Bai-Silverstein CLT for linear spectral statistics \citep[][Thm 1.1]{bai2004clt} applies both under the sequences of null and alternative hypotheses. Denoting---perhaps with a slight abuse of notation---by $H_{p,i}$ the spectral distributions under null ($i=0$) and alternative ($i=1$), this shows that 
\begin{align*}
\textnormal{under} \,\, H_0:&\, T_p(\varphi) - p\int_\I \varphi(x) d \mathcal{F}_{\gamma_p}(H_{p,0}) \Rightarrow \mathcal{N}(m_\varphi,\sigma_{\varphi}^2), \textnormal{ while}\\
\textnormal{under} \,\, H_1:&\, T_p(\varphi) - p\int_\I  \varphi(x) d \mathcal{F}_{\gamma_p}(H_{p,1}) \Rightarrow \mathcal{N}(m_\varphi,\sigma_{\varphi}^2).
\end{align*}

Here $m_\varphi, \sigma_{\varphi}^2$ are certain constants that are the same under the null and the alternative. Indeed, in Theorem 1.1 of \cite{bai2004clt}, these limiting parameters are given by certain contour integrals that \emph{only depend on the weak limit of the PSD}, and in our case these weak limits are both equal to $H$. The explicit form of these constants will only matter later. The important part is the \emph{difference} in the centering terms, i.e., the change from the argument of $\mathcal{F}_{\gamma_p}$ from $H_{p,0}$ to $H_{p,1}$. Therefore, the mean shift between the two hypotheses ought to equal
$$
\mu_{\varphi} = \lim_{p\to\infty}p\int_\I  \varphi(x) d \left[\mathcal{F}_{\gamma_p}(H_{p,1})-\mathcal{F}_{\gamma_p}(H_{p,0})\right],
$$

provided this limit is well defined. 
It is natural to conjecture that the signed measures $D_p=p\left[ \mathcal{F}_{\gamma_p}(H_{p,1})-\mathcal{F}_{\gamma_p}(H_{p,0}) \right]$ have a weak limit---and we will in fact prove this. We can write 
\begin{align*}
D_p=&p\left[ \mathcal{F}_{\gamma_p}(H_{p,1})-\mathcal{F}_{\gamma}(H_{p,1}) \right]
-p\left[ \mathcal{F}_{\gamma_p}(H_{p,0})-\mathcal{F}_{\gamma}(H_{p,0}) \right]  \\
+ &p\left[ \mathcal{F}_{\gamma}(H_{p,1})-\mathcal{F}_{\gamma}(H) \right] 
- p\left[ \mathcal{F}_{\gamma}(H_{p,0})-\mathcal{F}_{\gamma}(H) \right]
\end{align*}
Since $\gamma_p=\gamma$, the first two terms are 0; if we relaxed the assumptions to $\gamma_p\to\gamma$, these limits would need to be evaluated. Therefore, by the definition of the weak derivative of the Marchenko-Pastur map \eqref{dfg}, and by the definition of $H_{p,i} = (1-hp^{-1})H+hp^{-1}G_i$ the limit of $D_p$ will be $h\cdot[\delta \mathcal{F}_{\gamma}(H,G_1)-\delta \mathcal{F}_{\gamma}(H,G_0)]$. Further, $\varphi$ is continuous and bounded on $\I $, since by assumption $\varphi'$ exists on $\I$. Therefore, by the definition of weak convergence of signed measures \citep[see e.g.,][Ch. 8]{bogachev2007measure}, the mean shift will be 
\begin{equation}
\label{mu_f_prelim}
\mu_{\varphi} = h \int_\I  \varphi(x) d[\delta \mathcal{F}_{\gamma}(H,G_1)-\delta \mathcal{F}_{\gamma}(H,G_0)](x).
\end{equation}

We are therefore naturally lead to the study of the weak derivative. We will study it in a slightly more general setting than above, allowing for \emph{arbitrary compactly supported} probability distributions $H$ and $G$.

For any signed measure $\mu$, it will be convenient to define the \emph{companion measure} $\underline \mu = (1-\gamma) \mu + \gamma \delta_0$. The companion Stieltjes transform of a measure $\mu$ is then the Stieltjes transform of its companion $\underline \mu$. This terminology is consistent with the companion ESD, which we already used. Let $\mathcal{P}_c$ be the set of compactly supported probability measures on $([0,\infty),\mathcal{B}([0,\infty)))$. It is known that for $H\in\mathcal{P}_c$, one has $\fg(H)\in\mathcal{P}_c$ \citep{bai2009spectral}.  Our main result on the derivative of the Marchenko-Pastur map is the following:

\begin{theorem}[Weak derivative of the Marchenko-Pastur map]
\label{weak_der}
Let $\mathcal{F}_{\gamma}:\mathcal{P}_c \to \mathcal{P}_c$ be the forward Machenko-Pastur map, which takes the population spectral distribution $H$ to the limit empirical spectral distribution $\mathcal{F}_{\gamma}(H)$. Then $\fg$ has a well-defined weak derivative $\dfg(\cdot,\cdot)$, i.e., for any $H,G \in \mathcal{P}_c$, the following weak limit exists as $\ep\to 0$: 
\begin{equation*}
\frac{\fg((1-\ep)H+\ep G)-\fg(H)}{\ep} \Rightarrow\dfg(H,G). 
\end{equation*}
The limit $\dfg$ is a compactly supported signed measure with finite total variation, and has zero total mass:  $\dfg(\mathbb{R})=0$. Furthermore, 
\begin{enumerate}
\item The companion Stieltjes transform $s(z)$ of the weak derivative can be expressed as
\begin{equation}
\label{weak_st}
s(z) = -\gamma \,v'(z)\int \frac{t}{1+tv(z)} d\nu(t),
\end{equation}
where $\nu = G-H$, and v(z) is the companion Stieltjes transform of the limit empirical spectral distribution $\fg(H)$.
\item Therefore, the weak derivative is linear in the second argument: $\dfg(H,aP+bQ)$ $= a\,\dfg(H,P) + b\,\dfg(H,Q)$ for all $P,Q \in \mathcal{P}_c$, and $a,b>0$ with $a+b = 1$.
\item The distribution function of the weak derivative belongs to $L^2(\I)$. 
\end{enumerate}

\end{theorem}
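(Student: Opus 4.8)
The plan is to obtain the weak derivative --- in fact its companion Stieltjes transform --- by implicitly differentiating the Marchenko--Pastur (Silverstein) fixed-point equation in the mixing parameter $\ep$. Write $H_\ep = (1-\ep)H+\ep G$ and let $v_\ep(z)=v_\gamma(z;H_\ep)$ be the companion Stieltjes transform of $\fg(H_\ep)$, which for $H_\ep\in\mathcal P_c$ is the unique solution off $\mathbb R$ of
\begin{equation}
\label{silv}
z = -\frac{1}{v_\ep(z)} + \gamma\int\frac{t}{1+t\,v_\ep(z)}\,dH_\ep(t),\qquad z\in\mathbb C\setminus\mathbb R.
\end{equation}
The right-hand side is jointly analytic in $v$ and \emph{affine} in $\ep$, and its $v$-derivative evaluated at the solution equals $v_\ep'(z)^{-1}\neq 0$, since $v_\gamma(\cdot;H_\ep)$ is univalent on $\mathbb C\setminus\mathbb R$ (it is the functional inverse of the map on the right of \eqref{silv}). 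Hence, by the analytic implicit function theorem, $\ep\mapsto v_\ep(z)$ is analytic near $\ep=0$, locally uniformly in $z$. Differentiating \eqref{silv} in $\ep$ at $\ep=0$, and substituting the identity $v'(z)^{-1}=v(z)^{-2}-\gamma\int t^2(1+tv(z))^{-2}\,dH(t)$ obtained by differentiating the $\ep=0$ instance of \eqref{silv} in $z$, one solves the resulting linear equation for $\dot v(z):=\partial_\ep v_\ep(z)|_{\ep=0}$ and gets
\begin{equation}
\label{sdot}
\dot v(z) = -\gamma\,v'(z)\int\frac{t}{1+t\,v(z)}\,d\nu(t),\qquad \nu=G-H,
\end{equation}
with $v(z)=v_\gamma(z;H)$. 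Since $v_\ep$ is by definition the companion Stieltjes transform of $\fg(H_\ep)$, the companion Stieltjes transform of the weak derivative is $s=\lim_{\ep\to 0}\ep^{-1}(v_\ep-v)=\dot v$, which is exactly formula \eqref{weak_st}; this proves item 1.

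To turn this into the measure-level statements, note first that all $\fg(H_\ep)$ are supported in a single compact set $K$ depending only on $\gamma$, $\mathrm{supp}(H)$ and $\mathrm{supp}(G)$, by the standard support bounds for $\fg$. Thus the difference quotients $\mu_\ep:=\ep^{-1}[\fg(H_\ep)-\fg(H)]$ are signed measures on $K$, and by the perturbation estimate $v_\ep(z)=v(z)+\ep\dot v(z)+o(\ep)$ their Stieltjes transforms converge pointwise on $\mathbb C\setminus\mathbb R$ to a fixed multiple of $\dot v$. Combined with a uniform bound $\sup_\ep\|\mu_\ep\|_{\mathrm{TV}}<\infty$ --- which I would derive from the Stieltjes-inversion estimate $\|\mu_\ep\|_{\mathrm{TV}}\le C\int_K \ep^{-1}\,|\Im v_\ep(x+\mathrm{i}0)-\Im v(x+\mathrm{i}0)|\,dx$ together with the $\tfrac12$-H\"older behaviour of $v(\cdot+\mathrm{i}0)$ near the (at most $O(\ep)$-displaced) edges of $K$ --- the standard criterion for weak convergence of bounded-total-variation signed measures on a fixed compact set with convergent Stieltjes transforms shows that $\mu_\ep$ converges weakly to a finite compactly supported signed measure, which we call $\dfg(H,G)$; uniqueness of the Stieltjes transform makes this limit unambiguous. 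Its total mass vanishes, either because it is a weak limit of differences of probability measures, or because \eqref{sdot} gives $z\,s(z)\to 0$ as $z\to\infty$ (using $v,v'\to 0$).

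The remaining two items are short. Item 2 is immediate from \eqref{sdot}: the integrand is linear in $\nu=G-H$, and crucially $v=v_\gamma(\cdot;H)$ depends only on $H$; for $a,b>0$ with $a+b=1$ one has $aP+bQ-H=a(P-H)+b(Q-H)$, so $s$ --- hence, by injectivity of the Stieltjes transform, the measure $\dfg(H,\cdot)$ --- is linear in the spike argument. For item 3, the distribution function $\Delta$ of $\dfg(H,G)$ satisfies $|\Delta(x)|\le\|\dfg(H,G)\|_{\mathrm{TV}}<\infty$ for every $x$ and is supported in the bounded interval $\I$, so $\Delta\in L^2(\I)$.

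I expect the main obstacle to be the rigorous upgrade from pointwise convergence of the Stieltjes transforms to weak convergence of the $\mu_\ep$, i.e.\ the uniform total-variation bound. This is precisely where the square-root edge behaviour of the Marchenko--Pastur density enters: the density of $\mu_\ep$ inherits an integrable $(x-\text{edge})^{-1/2}$ singularity, but the edges themselves drift by $O(\ep)$, so the bound must be made quantitative and uniform in $\ep$, which needs the same edge analysis that underlies Proposition \ref{prop_df}. A cleaner alternative, should the total-variation route prove awkward, is to read the Lebesgue decomposition of the candidate limit directly off \eqref{weak_st} by Stieltjes inversion --- an absolutely continuous part with $L^1$ density on $\mathrm{int}(S)$, plus finitely many atoms at the sample spikes --- and combine it with continuity of $\fg$; this is essentially the content of Proposition \ref{prop_df}, so the two results are naturally established together.
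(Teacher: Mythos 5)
Your proposal is correct and, for the heart of the theorem, follows the same route as the paper: the paper's Proposition \ref{der_st} also derives \eqref{weak_st} by differentiating the Silverstein fixed-point equation --- it subtracts the two equations for $H$ and $H+\ep\nu$, identifies the resulting bracket as $1/v'(z)$ exactly as you do, and passes to the limit using a continuity lemma (Lemma \ref{cont_st}) plus bounded convergence, which is the hands-on version of your analytic-implicit-function-theorem argument. Items 2 and 3 are handled identically (linearity of \eqref{weak_st} in $\nu$ plus injectivity of the Stieltjes transform; boundedness of the distribution function by the total variation on the compact interval $\I$). The one place you diverge is the step you flag as the main obstacle: upgrading pointwise convergence of the Stieltjes transforms $s_\ep$ to weak convergence of the signed measures $D_\ep$. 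The paper does no edge analysis and no uniform total-variation bound here; it simply invokes Theorem B.9 of \cite{bai2009spectral} (convergence of Stieltjes transforms of finite signed measures vanishing at $-\infty$ yields vague convergence to a bounded-variation limit), and then uses the common compact support of all the $D_\ep$ to promote vague to weak convergence. Your caution is not misplaced --- the trivial bound on $\|D_\ep\|_{TV}$ is only $O(1/\ep)$, so the applicability of such a criterion deserves a check --- but your proposed detour through a quantitative square-root-edge estimate is considerably heavier than what the paper uses, and your cleaner alternative (reading the Lebesgue decomposition of the limit off \eqref{weak_st} by inversion) is indeed essentially what the paper's Proposition \ref{prop_df} does, albeit there for finitely supported $H$ and $G$ rather than general compactly supported measures. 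In short: same key computation, with your version doing extra (and partly only sketched) work on the measure-theoretic bookkeeping that the paper outsources to a cited theorem.
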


 The proof of this result is given later in Section \ref{thm:weak_der}. We choose to parametrize the derivative by $G \in \mathcal{P}_c$ instead of $G-H$, because this has a more direct connection to the testing problem.
 
Weak differentiability---in contrast to the stronger Fr\'echet differentiability---does not imply the linearity in the second argument; this must be established separately. It is possible that the Marchenko-Pastur map is Fr\'echet differentiable, but we do not need that.
Further, the equation \eqref{weak_st} is important, because it enables the efficient computation of the weak derivative.

%As a consequence, it follows that the Marchenko-Pastur map is continuous with respect to the topology of weak convergence, i.e., $H_p \Rightarrow H$ implies that $\fg(H_p) \Rightarrow \fg(H)$. This is a folklore result. 

By studying further properties of the weak derivative, we show that the power to detect PCs is unity above the phase transition (see Section \ref{prop_lss}).

\subsubsection{Finishing the proof}

We now have the tools to finish the proof of Theorems \ref{thm:lss} and \ref{thm:lss_part2}. 

\begin{proof}[Proof of  Theorem \ref{thm:lss} (continued)] 
So far, in Section \ref{sec:weak_der} we established that under the null $T_p(\varphi) -c_p \Rightarrow \mathcal{N}(0,\sigma_{\varphi}^2)$, while under the alternative  $T_p(\varphi) -c_p \Rightarrow \mathcal{N}(\mu_{\varphi},\sigma_{\varphi}^2)$, for a sequence of constants $c_p$. It follows from Eq. (1.17) on p. 564 of \cite{bai2004clt} that the variance $\sigma_{\varphi}^2$ has the form stated in Theorem \ref{thm:lss} (see \eqref{sigma_f}), while we showed that $\mu_{\varphi}$ has the form in \eqref{mu_f_prelim}. 

Recall that the distribution function of the weak derivative was defined by $\Delta(x) = D((-\infty,x])$, where $D=\dfg(H,G_1)-\dfg(H,G_0)$. Since $H$ and $G_i$ are compactly supported, from Theorem \ref{weak_der} it follows that the $\dfg$---and $D$--- are also compactly supported. The compact interval $\I=[a,b]$ is such that it includes this support. Since $D$ has zero total  mass, $\Delta(x)=0$ for $x\le a$ and for $x\ge b$. Using the integration by parts formula for the Lebesgue-Stieltjes integral, which is valid since $\varphi$ is absolutely continuous, and $D$ is a bounded Borel measure on $\mathcal{I}=[a,b]$ with $\Delta(a)=\Delta(b)=0$,  \cite[see e.g.][Ex. 5.8.112]{bogachev2007measure}, we thus have 
\begin{equation*}
\mu_{\varphi} = h \int_\I \varphi(x) d[\dfg(H,G_1)-\dfg(H,G_0)](x) = -h \int_\I \varphi'(x) \Delta(x) dx.
\end{equation*}
This shows the asymptotic equivalence to the normal problem stated in Theorem \ref{thm:lss}, and finishes its proof.
\end{proof}

We will now proceed to prove Theorem \ref{thm:lss_part2}.

\begin{proof}[Proof of  Theorem \ref{thm:lss_part2}]
To optimize over $\varphi$, we will use properties of the Hilbert space $L^2(\I)$ and its inner product $\la g,j\ra = \int_\I g(x) j(x) dx$.  Let us write $g = \varphi' \in L^2(\I)$. We are optimizing over $\varphi \in \mathcal{W}(\I)$, which by the definition of $\mathcal{W}(\I)$ is equivalent to optimizing over $\varphi' = g \in L^2(\I)$. The mean and variance are $\mu = -h \la g,\Delta\ra $, and $\sigma^2 = \la g,Kg\ra $. The expression $\mu = -h \la g,\Delta\ra $ is valid because $\Delta \in L^2(\I)$ by Theorem \ref{weak_der}. 

Therefore the efficacy optimization is equivalent to the problem of maximizing $\theta(g) = -h  \la g,\Delta\ra /$ $\la g,Kg\ra^{1/2}$ over $g \in L^2(\I)$. The following  lemma, proved in Section \ref{pf:ell2_opt}, gives the desired answer. 

\begin{lemma}
\label{ell2_opt} Consider maximizing $\theta(g)$ over $g\in L^2(\I)$. If $\Delta\notin \textnormal{Im}(K)$, the supremum is $+\infty$. It is achieved for $g$ such that $Kg=0$ and $\la g,\Delta\ra<0$. If $\Delta\in \textnormal{Im}(K)$, the maximum is  $h \la \Delta, K^+\Delta\ra^{1/2}$, and is achieved for $g$ such that $Kg = -\eta \Delta$, for some $\eta>0$. 
\end{lemma}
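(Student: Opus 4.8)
The plan is to recast the optimization of $\theta(g) = -h\,\la g,\Delta\ra/\la g,Kg\ra^{1/2}$ as a generalized Rayleigh-quotient problem, exploiting that $K$ is a compact, self-adjoint, positive semidefinite operator on $L^2(\I)$ (positivity because $k$ is the kernel of a variance, so $\la g,Kg\ra=\sigma_\varphi^2\ge 0$). The first step is to split $L^2(\I) = \ker(K)\oplus \overline{\textnormal{Im}(K)}$, which holds by self-adjointness. Write $g = g_0 + g_1$ accordingly. Then $\la g,Kg\ra = \la g_1,Kg_1\ra$, so the denominator only sees the $\overline{\textnormal{Im}(K)}$-component, whereas the numerator $\la g,\Delta\ra = \la g_0,\Delta\ra + \la g_1,\Delta\ra$ sees both.

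For the first case, $\Delta\notin\textnormal{Im}(K)$: I would split further, $\Delta = \Delta^{\ker} + \Delta^{\textnormal{ran}}$ with $\Delta^{\ker}$ the projection of $\Delta$ onto $\ker(K)$. If $\Delta^{\ker}\neq 0$, pick $g = g_0 = -\Delta^{\ker}$ (so $g\in\ker K$, hence $Kg = 0$ and $\sigma_\varphi=0$), giving $\la g,\Delta\ra = -\|\Delta^{\ker}\|^2 < 0$; by the convention $\theta = +\infty$ when $\sigma_\varphi = 0,\ \mu_\varphi\neq 0$, the supremum is $+\infty$ and is attained. If instead $\Delta^{\ker} = 0$ but still $\Delta\notin\textnormal{Im}(K)$ (i.e.\ $\Delta\in\overline{\textnormal{Im}(K)}\setminus\textnormal{Im}(K)$), I would produce a maximizing \emph{sequence}: using the spectral decomposition $K = \sum_j \mu_j\, e_j\la e_j,\cdot\ra$ with $\mu_j\downarrow 0$, write $\Delta = \sum_j d_j e_j$ and take $g^{(N)} = -\sum_{j\le N} (d_j/\mu_j) e_j$; then $\la g^{(N)},\Delta\ra = -\sum_{j\le N} d_j^2/\mu_j$ and $\la g^{(N)},Kg^{(N)}\ra = \sum_{j\le N} d_j^2/\mu_j$, so $\theta(g^{(N)}) = h\big(\sum_{j\le N} d_j^2/\mu_j\big)^{1/2}$. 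Since $\Delta\notin\textnormal{Im}(K) = \{\sum c_j e_j : \sum c_j^2/\mu_j^2 <\infty,\ \Delta^{\ker}=0\}$, Picard's criterion gives $\sum_j d_j^2/\mu_j^2 = \infty$; this is slightly stronger than $\sum_j d_j^2/\mu_j = \infty$, so one has to be a little careful, but $\sum d_j^2/\mu_j$ does still diverge because otherwise $\sum d_j^2/\mu_j^2$ would be dominated near the tail by $(\sup_{j> M}\mu_j^{-1})\sum_{j>M} d_j^2/\mu_j$, and $\mu_j^{-1}\to\infty$ forces the stronger sum to diverge whenever the weaker one converges only if... — this dichotomy needs the clean observation that convergence of $\sum d_j^2/\mu_j$ would imply $\Delta\in\textnormal{Im}(K^{1/2})$, and a further argument is needed; the cleanest route is to phrase the whole first case in terms of $\textnormal{Im}(K^{1/2})$ versus $\textnormal{Im}(K)$ and handle the gap. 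I expect \emph{this} — pinning down exactly when the supremum is $+\infty$ versus a finite but unattained value, i.e.\ the interplay of $\textnormal{Im}(K)$, $\textnormal{Im}(K^{1/2})$, and $\ker(K)$ — to be the main obstacle, and I would resolve it by working consistently with $K^{1/2}$.

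For the second case, $\Delta\in\textnormal{Im}(K)$, there is $\psi\in L^2(\I)$ with $K\psi = \Delta$, and one may take $\psi = K^+\Delta$, the minimum-norm solution, which lies in $\overline{\textnormal{Im}(K)}$. By Cauchy–Schwarz in the inner product $\la\cdot,K\cdot\ra$ on $\overline{\textnormal{Im}(K)}$ (which is genuinely an inner product there since $K$ is injective on $\overline{\textnormal{Im}(K)}$), for any $g$ with $g_1$ its $\overline{\textnormal{Im}(K)}$-part,
\[
|\la g,\Delta\ra| = |\la g_1, K\psi\ra| = |\la g_1, \psi\ra_K| \le \la g_1, g_1\ra_K^{1/2}\la\psi,\psi\ra_K^{1/2} = \la g,Kg\ra^{1/2}\la \Delta, K^+\Delta\ra^{1/2},
\]
using $\la\psi,\psi\ra_K = \la\psi,K\psi\ra = \la K^+\Delta,\Delta\ra$. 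Hence $\theta(g)\le h\la\Delta,K^+\Delta\ra^{1/2}$ for all $g$, with the additional note that taking $g_0\ne 0$ only enlarges $|\la g,\Delta\ra|$ at fixed denominator when $\Delta$ has a kernel component — but here $\Delta\in\textnormal{Im}(K)\subseteq\overline{\textnormal{Im}(K)}$ so $\Delta^{\ker}=0$ and $g_0$ contributes nothing to the numerator, so the bound stands. Equality in Cauchy–Schwarz forces $g_1 = -\eta\psi$ for some scalar, and the sign constraint $\la g,\Delta\ra<0$ (to get $\theta>0$, matching $\mu_\varphi = -h\la g,\Delta\ra>0$) forces $\eta>0$; any such $g$ satisfies $Kg = Kg_1 = -\eta K\psi = -\eta\Delta$. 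This gives both the value $h\la\Delta,K^+\Delta\ra^{1/2}$ and the characterization of the maximizers, completing the lemma.
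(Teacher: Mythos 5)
Your proposal tracks the paper's own proof closely on two of its three cases, and there it is correct. For $\Delta\notin\overline{\textnormal{Im}}(K)$ you take (the negative of) the component of $\Delta$ orthogonal to $\overline{\textnormal{Im}}(K)$, exactly as the paper does with $g=\Delta-\Pr(\Delta)$, and invoke the $\sigma_\varphi=0,\ \mu_\varphi\neq0$ convention; for $\Delta\in\textnormal{Im}(K)$ your Cauchy--Schwarz argument in the $K$-inner product, with $\psi=K^+\Delta$ and the equality case giving $Kg=-\eta\Delta$, $\eta>0$, is the same computation the paper performs via $\mu=h\la Kl,g\ra\le h\|K^{1/2}l\|\,\|K^{1/2}g\|$ with $l_0=-K^+\Delta$, including the degenerate $\la g,Kg\ra=0$ subcase.

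The genuine gap is the case $\Delta\in\overline{\textnormal{Im}}(K)\setminus\textnormal{Im}(K)$, which you explicitly leave unfinished, and your worry there is well founded: Picard's criterion gives $\sum_j d_j^2/\mu_j^2=\infty$, but your truncated sequence $g^{(N)}$ has $\theta(g^{(N)})=h\bigl(\sum_{j\le N}d_j^2/\mu_j\bigr)^{1/2}$, which diverges only if $\sum_j d_j^2/\mu_j=\infty$, i.e., only if $\Delta\notin\textnormal{Im}(K^{1/2})$. If instead $\Delta=K^{1/2}w$ with $w$ orthogonal to $\ker K$ but $\Delta\notin\textnormal{Im}(K)$ (a configuration that a general compact positive operator admits), then for every $g$ one has $|\la g,\Delta\ra|=|\la K^{1/2}g,w\ra|\le\la g,Kg\ra^{1/2}\|w\|$, so the supremum is the finite value $h\|w\|$ and is not attained; hence no cleverer choice of the same sequence can rescue the claimed dichotomy, and closing the case requires either ruling out $\Delta\in\textnormal{Im}(K^{1/2})\setminus\textnormal{Im}(K)$ for the specific kernel \eqref{kernel} and the specific $\Delta$ arising from the weak derivatives, or restating the dichotomy in terms of $\textnormal{Im}(K^{1/2})$. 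You should know that the paper's own proof treats this case with exactly your truncation sequence and simply asserts that $\Delta\notin\textnormal{Im}(K)$ forces $\sum_i l_i^2/k_i=+\infty$, i.e., it elides precisely the distinction you flag (note also that in this borderline case no $g$ with $Kg=0$ and $\la g,\Delta\ra<0$ exists, so only a maximizing sequence, not a maximizer, is produced). So your instinct to run the whole argument through $K^{1/2}$ is the right repair, but as written your proposal, like the paper's third case, does not complete the proof of the stated dichotomy.
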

The conclusion of Theorem \ref{thm:lss_part2} follows immediately from the above lemma, and finishes the proof. 
\end{proof}

Finally, we can prove Theorem \ref{pow_lss}. 

\begin{proof}
Consider first the choice of the critical values $t_{\varphi}^-,t_{\varphi}^+$ for a fixed $\varphi$. From Theorem \ref{thm:lss}, under the null $\smash{T_p(\varphi) - c_p(\varphi) \Rightarrow \mathcal{N}(0,\sigma_{\varphi}^2)}$, while under the alternative $\smash{T_p(\varphi) - c_p(\varphi)  \Rightarrow\mathcal{N}(\mu_{\varphi},\sigma_{\varphi}^2)}$. If the effect size of $\varphi$ is 0, $\mu_{\varphi}=0$, then using $\varphi$ leads to trivial power, so we will examine $\mu_{\varphi}<0$ in the remainder; the case $\mu_{\varphi}>0$ is analogous.

If $\sigma_{\varphi}>0$, the asymptotically optimal choices are $\smash{t_{\varphi}^- =m_\varphi + \sigma_{\varphi} z_{\alpha}}$ and $t_{\varphi}^+ = +\infty$; while the asymptotic power equals $\Phi\left(z_{\alpha}+|\mu_{\varphi}|/\sigma_{\varphi}\right)$. If $\sigma_{\varphi}=0$, then we can take $t_\varphi^{\pm} = m_\varphi \pm \ep$ for any $\ep>0$, and still have asymptotic level $\alpha$. Moreover, the test statistic converges in probability two dfferent values---0 and $\mu_{\varphi}$---under the null and the alternative. Therefore, the power of such a test is asymptotically unity for small $\ep$. 
We conclude that the maximal power over analytic functions is $\beta = \sup_{\varphi\in\mathcal{H}(\I)} \Phi(z_{\alpha}+|\mu_{\varphi}|/\sigma_{\varphi}) = \Phi(z_{\alpha}+\theta^*(\mathcal{H}(\I)))$. %Here we used that the objective $\mu_{\varphi}/\sigma_{\varphi}$ is odd, i.e., its value changes sign under $f\mapsto-f$, and the space $\mathcal{H}(\I))$ is symmetric under $f\to-f$ and hence the absolute value does not matter. 
Here we used the convention of defining $\mu_{\varphi}/\sigma_{\varphi}$ as 0 or $+\infty$ in corner cases. 

We now show that the efficacy over the set of analytic functions $\mathcal{H}(\I)$ equals the efficacy over $\mathcal{W}(\I)$, because the optimal LSS can be approximated arbitrarily well---in an $L^2$ sense---by analytic functions. 

\begin{lemma}[Optimal Linear Spectral Statistics over $\mathcal{H}(\I)$]
\label{thm:lss_part3} The efficacy over the set of analytic functions $\mathcal{H}(\I)$ equals that over $\mathcal{W}(\I)$: $\theta^*(\mathcal{H}(\I))=\theta^*(\mathcal{W}(\I))$. There is a sequence $\varphi_n\in\mathcal{H}(\I)$ such that $\theta(\varphi_n) \uparrow  \theta^*(\mathcal{W}(\I))$. 
\end{lemma}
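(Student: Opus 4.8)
The plan is to exploit two facts: first, that $\mathcal{H}(\I) \subset \mathcal{W}(\I)$, so trivially $\theta^*(\mathcal{H}(\I)) \le \theta^*(\mathcal{W}(\I))$; and second, that the efficacy functional $\theta(\varphi) = -h\la \varphi',\Delta\ra / \la \varphi',K\varphi'\ra^{1/2}$ depends on $\varphi$ only through $g=\varphi' \in L^2(\I)$, and is continuous in $g$ in an appropriate sense. So the crux is a density/approximation argument: given any $\varphi \in \mathcal{W}(\I)$ with efficacy close to $\theta^*(\mathcal{W}(\I))$, I want to produce a sequence of analytic functions $\varphi_n \in \mathcal{H}(\I)$ whose efficacies converge up to $\theta(\varphi)$. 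Combined with Theorem \ref{thm:lss_part2} (which identifies $\theta^*(\mathcal{W}(\I))$ and, in the unsolvable case, exhibits near-optimizers with efficacy $\to\infty$), this gives the claim.

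The key steps, in order. (1) Handle the finite case $\Delta \in \textnormal{Im}(K)$ first. By Theorem \ref{thm:lss_part2} there is $g^* = (\varphi^*)' \in L^2(\I)$ with $K g^* = -\eta\Delta$ achieving $\theta^*(\mathcal{W}(\I)) = h\la\Delta,K^+\Delta\ra^{1/2} < \infty$. Since polynomials (hence restrictions of entire functions) are dense in $L^2(\I)$, pick polynomials $g_n \to g^*$ in $L^2(\I)$; let $\varphi_n$ be an antiderivative of $g_n$, which is a polynomial, hence in $\mathcal{H}(\I)$. (2) Show $\theta(\varphi_n) \to \theta(\varphi^*)$. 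The numerator $\la g_n,\Delta\ra \to \la g^*,\Delta\ra$ by Cauchy–Schwarz since $\Delta \in L^2(\I)$ (Theorem \ref{weak_der}). The denominator $\la g_n, K g_n\ra \to \la g^*, K g^*\ra$ because $K$ is a bounded (indeed compact) operator on $L^2(\I)$, so $g \mapsto \la g,Kg\ra$ is continuous; one also needs $\la g^*,Kg^*\ra > 0$, which holds since $\la g^*,Kg^*\ra = -\eta^{-1}\la \Delta, \varphi^*{}'\ra \cdot(\ldots)$ — more directly, if it were zero the efficacy would be $+\infty$, contradicting the finite case, so $K$ is positive semidefinite and the relevant value is strictly positive. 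Hence $\theta(\varphi_n) \to \theta^*(\mathcal{W}(\I))$, and we may pass to a monotone subsequence to get $\theta(\varphi_n) \uparrow \theta^*(\mathcal{W}(\I))$. (3) Handle the infinite case $\Delta \notin \textnormal{Im}(K)$. Here Theorem \ref{thm:lss_part2} says $\theta^*(\mathcal{W}(\I)) = +\infty$, realized by any $g$ with $Kg = 0$, $\la g,\Delta\ra < 0$, or by an approximating sequence $g_n$ with $\la g_n, Kg_n\ra \to 0$ while $\la g_n,\Delta\ra$ stays bounded away from $0$. I approximate each such $g_n$ by a polynomial $\tilde g_n$ with $\|\tilde g_n - g_n\|_{L^2} $ small enough (depending on $n$) that $|\la \tilde g_n, K\tilde g_n\ra - \la g_n, Kg_n\ra|$ and $|\la\tilde g_n,\Delta\ra - \la g_n,\Delta\ra|$ are both $\le 1/n$; then $\theta(\tilde \varphi_n) \to +\infty$ for the polynomial antiderivatives $\tilde\varphi_n$, and a diagonal/subsequence argument gives the monotone sequence in $\mathcal{H}(\I)$.

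I expect the main obstacle to be the bookkeeping in the unsolvable case (step 3): one must be careful that when $K g_n \to 0$ in the quadratic form, perturbing $g_n$ by a small $L^2$ amount does not destroy this (it does not, by boundedness of $K$ and Cauchy–Schwarz: $|\la g+\delta, K(g+\delta)\ra - \la g,Kg\ra| \le 2\|K\|\|g\|\|\delta\| + \|K\|\|\delta\|^2$), while simultaneously keeping $\la g_n,\Delta\ra$ uniformly negative — this requires choosing the polynomial approximation error $\|\delta_n\|$ as a function of both $\|g_n\|$ and $n$, so the quantifiers must be ordered correctly. A minor subtlety is that functions in $\mathcal{W}(\I)$ have derivatives only defined a.e.\ and need not be bounded, so one works throughout with the derivatives in $L^2(\I)$ rather than with the functions themselves; this is harmless since $\theta$ only sees $\varphi'$. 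The rest is routine: density of polynomials in $L^2$ of a compact interval, continuity of bounded quadratic forms, and extraction of a monotone subsequence.
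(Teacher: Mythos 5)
Your proposal is correct and follows essentially the same route as the paper's proof: the trivial inclusion $\mathcal{H}(\I)\subset\mathcal{W}(\I)$ gives one inequality, and the reverse follows by approximating $\varphi'$ in $L^2(\I)$ by derivatives of analytic functions and using the $L^2$-continuity of the linear numerator (since $\Delta\in L^2(\I)$) and of the quadratic-form denominator (since $K$ is bounded). The paper streamlines your case analysis by noting it suffices to approximate the efficacy of each \emph{fixed} $\varphi\in\mathcal{W}(\I)$ — which covers the $\theta^*=+\infty$ case via near-optimizers without separate bookkeeping — but the substance is the same.
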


The proof is in Section \ref{pf:lss_part3}. From Lemma \ref{thm:lss_part3} we conclude that $\beta= \Phi(z_{\alpha}+\theta^*(\mathcal{W}(\I)))$. Now Theorem \ref{thm:lss_part2} shows that $\theta^*(\mathcal{W}(\I))$ has the desired form, finishing the proof. 
\end{proof}

\subsection{Proof of Theorem \ref{weak_der}}
\label{thm:weak_der}

\begin{proof}[Proof of Theorem \ref{weak_der}]
Let us denote the finite difference $D_\ep = [\fg((1-\ep)H+\ep G)-\fg(H)]/\ep$. The following lemma shows that the Stieltjes transform of $D_\ep$ converges as $\ep \to 0$.

\begin{proposition}
\label{der_st}
The Stieltjes transform $s_\ep(z)$ of $D_\ep$ converges to $s(z)$ from \eqref{weak_st}, as $\ep \to 0$, for all $z\in \mathbb{C}^+$.
\end{proposition}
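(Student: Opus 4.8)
The plan is to pass to Stieltjes transforms and differentiate the Marchenko--Pastur fixed-point equation implicitly in $\ep$. Fix $z\in\mathbb{C}^+$, write $H_\ep=(1-\ep)H+\ep G=H+\ep\nu$ with $\nu=G-H$, and let $v_\ep=v_\gamma(z;H_\ep)$ be the companion Stieltjes transform of $\fg(H_\ep)$ and $v_0=v_\gamma(z;H)$. Each is the unique solution in the relevant half-plane of the Silverstein equation
\begin{equation*}
z=-\frac{1}{v_\ep}+\gamma\int\frac{t}{1+tv_\ep}\,dH_\ep(t).
\end{equation*}
Since $\fg(H_\ep)-\fg(H)$ and its companion differ only by the fixed affine relation $v=\gamma m-(1-\gamma)/z$, it suffices to analyze $w_\ep:=(v_\ep-v_0)/\ep$ and identify its limit with $s(z)$ from \eqref{weak_st}; the Stieltjes transform $s_\ep$ of $D_\ep$ then inherits the convergence.

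First I would subtract the fixed-point equations for $v_\ep$ and $v_0$ and telescope the difference: $1/v_0-1/v_\ep=(v_\ep-v_0)/(v_0v_\ep)$, and
\[
\int\frac{t}{1+tv_\ep}dH_\ep-\int\frac{t}{1+tv_0}dH
=(v_0-v_\ep)\int\frac{t^2}{(1+tv_\ep)(1+tv_0)}dH_\ep+\ep\int\frac{t}{1+tv_0}d\nu .
\]
Substituting and dividing by $\ep$ gives
\begin{equation*}
w_\ep\left[\frac{1}{v_0v_\ep}-\gamma\int\frac{t^2}{(1+tv_\ep)(1+tv_0)}dH_\ep\right]=-\gamma\int\frac{t}{1+tv_0}\,d\nu(t),
\end{equation*}
so $w_\ep$ is given in closed form. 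Letting $\ep\to0$: continuity of $\fg$ under weak convergence ($H_\ep\Rightarrow H$) gives $v_\ep\to v_0$, and since the supports of all $H_\ep$ lie in a common compact set on which $|1+tv_\ep(z)|$ is bounded below uniformly for small $\ep$ (standard Silverstein estimates), dominated convergence gives $\int\frac{t^2}{(1+tv_\ep)(1+tv_0)}dH_\ep\to\int\frac{t^2}{(1+tv_0)^2}dH$. Hence
\begin{equation*}
w_\ep\longrightarrow\frac{-\gamma\int\frac{t}{1+tv_0}d\nu}{\,v_0^{-2}-\gamma\int\frac{t^2}{(1+tv_0)^2}dH\,}.
\end{equation*}

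To recognize the right-hand side as $s(z)$ I would differentiate the unperturbed equation in $z$, obtaining $1=v'(z)\bigl[v(z)^{-2}-\gamma\int t^2(1+tv(z))^{-2}dH(t)\bigr]$; thus the denominator above equals $1/v'(z)$, which is finite (compact support, $1+tv\neq0$) and nonzero (vanishing would force $1=0$). Therefore $w_\ep\to-\gamma\,v'(z)\int\frac{t}{1+tv(z)}\,d\nu(t)=s(z)$, which is exactly \eqref{weak_st}, and the pointwise convergence on $\mathbb{C}^+$ follows.

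The one genuinely non-routine ingredient is the limit in the last display: one must know $v_\ep(z)\to v_0(z)$ and that the factors $1+tv_\ep(z)$ stay uniformly bounded away from $0$, so that the bracket converges to the \emph{nonzero} quantity $1/v'(z)$ rather than degenerating. Both follow from standard facts about the Silverstein equation---continuity of its solution in the population distribution, and that $-1/v(z)$ lies outside $\mathrm{supp}(H)$ for $z\in\mathbb{C}^+$---but they must be invoked carefully; everything else is implicit differentiation and bookkeeping between ordinary and companion Stieltjes transforms. (Equivalently, one could run the computation through the analytic implicit function theorem: the right-hand side of the fixed-point equation is jointly analytic in $(v,\ep)$ with $\partial_v\neq0$ at $(v_0,0)$, so $\ep\mapsto v_\ep(z)$ is analytic near $0$ with the derivative computed above.)
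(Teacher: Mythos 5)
Your proof is correct and takes essentially the same route as the paper's: subtract the two Silverstein fixed-point equations, divide by $\ep$ to isolate $(v_\ep-v)/\ep$ times a bracketed factor, pass to the limit using $v_\ep(z)\to v(z)$ together with a uniform bound on $1/(1+tv_\ep)$, and identify the bracket with $1/v'(z)$ by differentiating the Silverstein equation in $z$. The only (immaterial) differences are bookkeeping in the telescoping (you keep $dH_\ep$ and $1+tv_0$ where the paper keeps $dH$ and $1+tv_\ep$) and that the paper proves the continuity $v_\ep(z)\to v(z)$ as a separate lemma (Lemma \ref{cont_st}) rather than invoking it as a standard property of the Silverstein equation, as you do.
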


The proof is given later (in Section \ref{pf:der_st}). Recall now that a sequence of signed measures $\mu_n$ on $\mathbb{R}$ endowed with the Borel sigma-algebra $\mathcal{B}$ \emph{converges vaguely} to the signed measure $\mu$, denoted $\mu_n \Rightarrow_v \mu$, if $\mu_n(\varphi) \to \mu(\varphi)$, for all bounded continuous $\varphi$ of compact support. Due to Proposition \ref{der_st}, it follows that there is a unique signed measure $\dfg(H,G)$ such that $D_\ep \Rightarrow_v\dfg(H,G)$. Indeed, by Theorem B.9 from \cite{bai2009spectral}, we only need to notice that $D_\ep$ is a finite signed measure (since it is a difference of two positive finite measures), and $D_\ep(-\infty)=0$ (since this is true for $\fg$). By the result cited above, it follows that the vague limit exists as a signed measure $\dfg$ with finite total variation, and that $s(z)$ is its Stieltjes transform. 

Next, if $H$ is compactly supported within an interval $[0,M]$, it follows that $\fg(H)$ is compactly supported in the interval $[0,(1+\sqrt{\gamma})^2M]$ \citep{bai2009spectral}. Therefore, if $H$ and $G$ are compactly supported, then $\fg(H)$, $\fg((1-\ep)H+\ep G)$ and $D_\ep$ are uniformly compactly supported for all $\ep>0$. Hence $\dfg$ is compactly supported. Furthermore, the vague convergence $D_\ep \Rightarrow \dfg$ is equivalent to weak convergence, as required. 

Clearly $D_\ep$ has zero total measure for all $\ep$, hence $\dfg$ has zero total measure. This establishes the claims about convergence---including Claim 1---stated in Theorem \ref{weak_der}. 

For the Claim 2 in Theorem \ref{weak_der}, the explicit formula for $s(z)$ shows that the Stieltjes transform is linear with respect to the second argument, i.e., denoting by $s_\mu$ the Stieltjes transform of the signed measure $\mu$ (and omitting $\gamma$ from $\dfg$):
$$
s_{\df(H,aP+bQ)}(z) = a\, s_{\df(H,P)}(z) + b \, s_{\df(H,Q)}(z).
$$
By the uniqueness of Stieltjes transforms of signed measures, Theorem B.8 of \cite{bai2009spectral}, it follows that the weak limit is itself linear in the second argument.

For Claim 3 in Theorem \ref{weak_der}---which states that the cdf $\Delta \in L^2(\I)$---we argue as follows: since $\dfg$ is a signed measure with finite total variation, it can be written as the difference of its positive and negative parts, $\mu=\mu_+-\mu_-$, by the Jordan decomposition theorem (see e.g., \cite{bogachev2007measure} Vol. I. p. 176). Therefore, $\Delta$ can be written as the difference of their distribution functions, $\Delta(x) = L_+(x)-L_-(x)$. The d.f.-s of the positive finite measures $\mu_\pm$ are nondecreasing, and $|\Delta(x)| \le L_+(x)+L_-(x) \le  L_+(b)+L_-(b) = \|\dfg\|_{TV} <\infty$. Therefore, $\Delta$ is bounded on the compact interval $\I$, and hence square integrable. 
\end{proof}

\subsection{Proof of Proposition \ref{der_st}}
\label{pf:der_st}

\begin{proof}
Let us denote by $v_\mu$ the companion Stieltjes transform of a measure $\mu$. By linearity, the Stieltjes transform of $D_\ep$ equals
$$
s_\ep = \frac{v_{\mathcal{F}(H+\ep \Delta)}-v_{\mathcal{F}(H)}}{\ep}.
$$
The Silverstein equation \citep{marchenko1967distribution,silverstein1995analysis} for $H$ and $H + \ep \nu$ shows that  for $z\in\mathbb{C}^+$ (omitting the argument $z$ from the Stieltjes transforms)
\begin{align*}
-\frac{1}{v_{\mathcal{F}(H)}} &= z - \gamma \int \frac{t \, dH(t)}{1 + tv_{\mathcal{F}(H)}},\\
-\frac{1}{v_{\mathcal{F}(H + \ep \nu)}} &= z - \gamma \int \frac{t \, d[H+ \ep \nu](t)}{1 + tv_{\mathcal{F}(H + \ep \nu)}}.
\end{align*}
Let us denote $v = v_{\mathcal{F}(H)}$ and $v_\ep = v_{\mathcal{F}(H + \ep \nu)}$. Subtracting the first equation from the second one, and rearranging, we find
\begin{equation}
\frac{v_\ep - v}{\ep}
\left\{\frac{1}{v_\ep v} - \gamma \int \frac{t^2 \, dH(t)}{[1 + tv_\ep][1 + tv]}\right\} = - \gamma \int \frac{t \, d\nu(t)}{1 + tv_\ep}.
\label{raw_st_eq}
\end{equation}

To take the limit as $\ep\to0$, we use the following Lemma, proved in Section \ref{pf:cont_st}. 

\begin{lemma}[Continuity of the Marchenko-Pastur map]
\label{cont_st} As $\ep\to0$, $v_{\mathcal{F}(H + \ep \Delta)}(z) \to v_{\mathcal{F}(H)}(z)$ for all $z \in \mathbb{C}^+$.
\end{lemma}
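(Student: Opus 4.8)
The plan is to run a normal-families (Montel) argument together with the uniqueness of the solution of the Silverstein fixed-point equation; this also effectively reproves weak continuity of the deterministic Marchenko--Pastur map. Write $H_\ep = (1-\ep)H + \ep G = H + \ep\nu$ with $\nu = G-H$. For $\ep\in(0,1]$ these are probability measures supported in a common compact set $[0,M]$ (since $H,G\in\mathcal P_c$), and $H_\ep\Rightarrow H$ as $\ep\to 0$ --- in fact $\|H_\ep - H\|_{\mathrm{TV}}\le 2\ep$. Set $v_\ep = v_{\mathcal F_\gamma(H_\ep)}$, which satisfies
\begin{equation*}
-\frac{1}{v_\ep(z)} = z - \gamma \int \frac{t\, dH_\ep(t)}{1 + t\, v_\ep(z)}, \qquad z\in\mathbb C^+ .
\end{equation*}
Since each $v_\ep$ is, up to an additive $1/z$-type term, the Stieltjes transform of a measure of bounded total mass, the family $\{v_\ep\}_{\ep\in(0,1]}$ is uniformly bounded on compact subsets of $\mathbb C^+$ and maps $\mathbb C^+$ into $\overline{\mathbb C^+}$.

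Next I would fix an arbitrary sequence $\ep_k\to 0$ and, by Montel's theorem, extract a subsequence (not relabeled) along which $v_{\ep_k}\to\tilde v$ locally uniformly on $\mathbb C^+$. By Hurwitz's theorem $\tilde v$ is either constant or maps $\mathbb C^+\to\mathbb C^+$; the constant case is excluded because the right-hand side of the fixed-point equation is non-constant in $z$ when $\tilde v$ is constant, so $\Im\tilde v>0$ on $\mathbb C^+$. I then pass to the limit in the equation: the left side converges pointwise, and for the integral I split
\begin{equation*}
\int \frac{t\, dH_{\ep_k}}{1 + t v_{\ep_k}} - \int \frac{t\, dH}{1 + t \tilde v} = \int \frac{t^2(\tilde v - v_{\ep_k})}{(1 + t v_{\ep_k})(1 + t\tilde v)}\, dH_{\ep_k} + \left(\int \frac{t\, dH_{\ep_k}}{1 + t\tilde v} - \int \frac{t\, dH}{1 + t\tilde v}\right).
\end{equation*}
Fix $z\in\mathbb C^+$: since $\Im(1 + t v_{\ep_k}(z)) = t\,\Im v_{\ep_k}(z)\ge 0$ and $\Im\tilde v(z)>0$, the denominators on $[0,M]$ are bounded below by a positive constant for all large $k$, so the first term is $O\!\big(M^2|\tilde v(z) - v_{\ep_k}(z)|\big)\to 0$; the second tends to $0$ because $t\mapsto t/(1 + t\tilde v(z))$ is bounded and continuous on $[0,M]$ and $H_{\ep_k}\Rightarrow H$. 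Hence $\tilde v$ solves the Silverstein equation associated with $H$.

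Finally, by Silverstein's theorem that equation has a unique solution with positive imaginary part on $\mathbb C^+$, namely $v_{\mathcal F_\gamma(H)}$, so $\tilde v = v_{\mathcal F_\gamma(H)}$; since every sequence $\ep_k\to 0$ has a subsequence along which $v_{\ep_k}(z)\to v_{\mathcal F_\gamma(H)}(z)$, we get $\lim_{\ep\to 0}v_\ep(z) = v_{\mathcal F_\gamma(H)}(z)$ for every $z\in\mathbb C^+$, which is the claim. (Alternatively and more briefly, one may invoke weak continuity of the deterministic Marchenko--Pastur map to obtain $\mathcal F_\gamma(H_\ep)\Rightarrow\mathcal F_\gamma(H)$, and then convergence of the companion Stieltjes transforms is immediate because $x\mapsto(x-z)^{-1}$ is bounded and continuous; I would keep the self-contained version.) The main obstacle is precisely the passage to the limit inside the integral: one must keep $1 + t v_\ep(z)$ bounded away from zero uniformly on $[0,M]$, and this must be done using $\Im v_\ep(z)>0$ together with the normal-family limit, not any convergence of $v_\ep$ itself --- that convergence being exactly what is to be proved.
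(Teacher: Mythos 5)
Your proof is correct, but it takes a genuinely different route from the paper's. You run the classical compactness argument: local boundedness of the companion Stieltjes transforms gives a normal family, Montel extracts a locally uniform subsequential limit $\tilde v$, you pass to the limit in the Silverstein equation (with the right decomposition of the integral and the right lower bound on the denominators $1+tv_{\ep_k}$ via $\Im v_{\ep_k}\ge 0$ and $\Im\tilde v>0$), and you conclude by uniqueness of the solution with positive imaginary part, then by the subsequence principle. The paper instead works quantitatively: it subtracts the two Silverstein equations to get the identity $\bigl(v_\ep/v-1\bigr)\bigl\{1-\gamma\int t^2v_\ep v\,dH/([1+tv_\ep][1+tv])\bigr\}=-\ep\gamma\int tv_\ep\,d\nu/(1+tv_\ep)$, restricts to real $z=-\lambda$ with $\lambda$ large so that the curly-brace factor is pinned in a fixed interval $(a,1)$ and the right-hand side is visibly $O(\ep)$, deduces $v_\ep\to v$ on a real half-line, and then propagates to all of $\mathbb{C}\setminus\mathbb{R}^+$ by Vitali's theorem. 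Your approach is more self-contained at each fixed $z\in\mathbb{C}^+$ and reproves weak continuity of the Marchenko--Pastur map along the way; the paper's approach is shorter because it reuses the same algebraic identity that drives the computation of the weak derivative (Proposition \ref{der_st}), and it yields an explicit $O(\ep)$ rate on the half-line. One spot to tighten in yours: the exclusion of a constant limit in the Hurwitz step. If $\tilde v\equiv c\in\mathbb{R}$ you cannot yet pass to the limit inside the integral (the denominators $1+tc$ may vanish), so "the right-hand side is non-constant in $z$" needs the a priori bound $\bigl|\int t\,dH_{\ep_k}/(1+tv_{\ep_k})\bigr|\le M|z|/\Im z$ evaluated at $z=iy$ for large $y$ to produce the contradiction; alternatively, skip Hurwitz entirely by noting $\Im v_\ep(z)\ge \Im z/(M'+|z|)^2>0$ uniformly in $\ep$, since the supports of the $\mathcal{F}_\gamma(H_\ep)$ are uniformly compact.
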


Furthermore, $v(z)\neq0$ for all $z\in\mathbb{C}^+$ follows directly from the Silverstein equation. By the bounded convergence theorem, it follows that as $\ep\to0$, we have the limits
\begin{align*}
\int \frac{t \, d\nu(t)}{1 + tv_\ep} &\to \int \frac{t \, d\nu(t)}{1 + tv}, \\
 \int \frac{t^2 \, dH(t)}{[1 + tv_\ep][1 + tv]} &\to \int \frac{t^2 \, dH(t)}{[1 + tv]^2}.
\end{align*}
Indeed, for the first claim, since $\nu = G-H$, by linearity it is enough to show the convergence for bounded probability measures. For the Stieltjes transform $m_F$ of any probability measure $F$, we have the inequality (see Corollary 3.1 in \cite{couillet2011random})

$$\left|\frac{1}{1+tm_F(z)}\right| \le \frac{|z|}{\Im{z}}, \,\, z\in \mathbb{C}^+.$$

Thus, $|t/(1 + tv_\ep)| \le t |z|/\Im{z}$. This shows that the integrand is uniformly bounded for compactly supported probability measures; and so the bounded convergence theorem applies to show the required convergence. A similar argument works for the second convergence claim. 

Therefore, as $\ep\to 0$, the term in curly braces in \eqref{raw_st_eq} tends to 
$$
\frac{1}{v^2} -\gamma\int \frac{t^2\,dH(t)}{[1 + tv]^2}.
$$
However, by directly differentiating the Silverstein equation in $z$, we see that this equals $1/v'(z) = (dv/dz)^{-1}$. The derivative exists because $v$ is analytic on $\mathbb{C}^+$. Therefore, in the limit $\ep\to0$ \eqref{raw_st_eq} becomes
$$
s(z) = \lim_{\ep\to 0}\frac{v_\ep - v}{\ep} = -\gamma \,v'(z)\int \frac{t}{1+tv(z)} d\nu(t),
$$
as required. This finishes the proof.  
\end{proof}

\subsection{Proof of Proposition \ref{prop_df}}
\label{pf:prop_df}

{\renewcommand{\arraystretch}{1.1}
\begin{proof}

We will study the behavior of the companion Stieltjes transform of $\dfg$. From Theorem \ref{weak_der}, this satisfies the equation
\begin{equation*}
s(z) = -\gamma \,v'(z)\left(\sum_{j=1}^{l} \frac{u_js_j}{1+s_jv(z)} -  \sum_{i=1}^{k} \frac{w_it_i}{1+t_iv(z)} \right).
\end{equation*}

The Stieltjes transform fully characterizes the distribution function (d.f.) $\Delta$ of $\dfg$. Indeed, first we note that $\Delta$ is almost everywhere (a.e.) continuous. This follows because every signed measure $\mu$ with finite total variation on $(\mathbb{R},\mathcal{B})$ can be written as the difference of its positive and negative parts, $\mu=\mu_+-\mu_-$ by the Jordan decomposition theorem (see e.g., \cite{bogachev2007measure} Vol. I. p. 176). The d.f.-s of the positive finite measures $\mu_\pm$ are continuous a.e., hence the d.f. of $\mu $ is also continuous a.e.

Next, by the inversion formula for signed measures with finite total variation, (see Theorem B.8 in \cite{bai2009spectral}), for all points of continuity $a<b$ of $\Delta$,
\begin{equation}
\label{inversion_st}
\Delta(b) -\Delta(a)= \lim_{\ep\downarrow 0}\frac{1}{\pi} \int_{a}^{b} \Im(s(y+i\ep))dy.
\end{equation}

Therefore $\Delta$ is determined on intervals $(a,b]$ with $a,b$ belonging to a set of full Lebesgue measure; and hence is fully determined.

Further, if $j(x) = \pi^{-1}\lim_{\ep\downarrow 0}s(x+i\ep)$ exists, then $\Delta$ is differentiable at $x$ with derivative $j(x)$ (see Theorem B.10 in \cite{bai2009spectral}).

Hence we study the limit behavior of $s(z)$ near the real line, as $z\downarrow x \in \mathbb{R}$. This can be deduced from the formula for $s(z)$, and the behavior of the companion Stieltjes transform $v(z)$ of $\fg$, which is well understood \citep{silverstein1995analysis}.

\begin{table}[]
\centering
\caption{Behavior of the Stieltjes transform of $\dfg$ near the real axis, and properties of its density.}
\label{t2}
\begin{tabular}{|c|c|c|c|}
\hline
\multirow{2}{*}{$x$} & \multirow{2}{*}{$\in \text{int}(S)$}  &\multicolumn{2}{|c|}{$\in S^c$}      \\ \cline{3-4} 
                     &                                      &                           $v(x)\neq-1/s_i$ & $v(x)=-1/s_i$    \\ \hline
$v'(x)$              & $\in \mathbb{C}$                     &               $\in \mathbb{R}$ & $\in \mathbb{R}$ \\ \hline
$s_i/(1+s_iv(x))$    & $\in \mathbb{C}$                     &             $\in \mathbb{R}$ & diverges         \\ \hline
$s(x)$               & $\in \mathbb{C}$                     &                         $\in \mathbb{R}$ & diverges         \\ \hline
$\dfg$               & $(\exists)$ density                     &                    0 density & point mass         \\ \hline
\end{tabular}
\end{table}

It is helpful to do the analysis separately depending on whether or not $x$ belongs to the interior of $S$. The different cases are examined below, and summarized in Table \ref{t2}. We remark that the edges of $S$ are more delicate to analyze, because $v'$ has a singularity at the edges, and $1+s_j v=0$ may happen; this is not required in the present proof and is left for future work. 
\begin{itemize}

\item If $x$ belongs to the interior of the support $S$, then the limit $\ld v(x+i\ep) = v(x)$ exists with $\Im(v(x))>0$. The limit $\ld v'(x+i\ep) = v'(x)$ also exists  \citep[see ][for both claims]{silverstein1995analysis}. Hence the limit $\ld s(x+i\ep) = s(x)$ exists. This shows that $\dfg$ has a density at all $x\in\textnormal{int}(S)$.

\item If $x$ belongs to the complement of the support, $S^c$, then the limit $\ld v(x+i\ep) = v(x)$ exists with $\Im(v(x))=0$. The limit $\ld v'(x+i\ep) = v'(x) \in \mathbb{R}$ also exists \citep[again, see ][for these claims]{silverstein1995analysis}. Therefore, if $x$ is such that  $v(x)\neq -1/t_i$ and $v(x)\neq -1/s_j$, the limit $s(x) \in \mathbb{R}$ also exists. 

Now $v(x)$ does not take the values $-1/t_i$. This follows because,  by continuity,  $v(x)$ is a  solution to the Silverstein equation, which by inspection cannot have that root. Therefore, $v(x)\neq -1/s_j$ guarantees that $s(x)\in \mathbb{R}$  exists. Then $\dfg$ has a density equal to 0 at $x$.

\item If $x\in S^c$, but $v(x)=-1/s_j$ for some $j$, then we will show $\dfg$ has a point mass at $x$. This will be based on a lemma, proved later in Section \ref{pf_lemma:pointmass}.

\begin{lemma}
\label{lemma:pointmass}
Let $\mu$ be a signed measure with finite total variation on $(\mathbb{R},\mathcal{B})$, with Stieltjes transform denoted $s_\mu$. Suppose $s_\mu$ is complex analytic in the neighborhood of a point $x\in\mathbb{R}$, but has a residue $-w$ at $x$. Then $\mu$ has a point mass $w$ at $x$, i.e., $\mu(\{x\})=w$, while $\mu((x-\delta,x+\delta))=w$ for small $\delta>0$. 
\end{lemma}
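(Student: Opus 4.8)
The plan is to subtract off the conjectured point mass and show that the remainder carries no mass near $x$. Concretely, write $\mu = w\,\delta_x + \nu$, so that $\nu$ is again a signed measure of finite total variation with Stieltjes transform $s_\nu(z) = s_\mu(z) + w/(z-x)$. Since the point mass $w\delta_x$ has Stieltjes transform $-w/(z-x)$, the hypothesis that $s_\mu$ has residue $-w$ at $x$ is exactly what cancels the pole, so that $s_\nu$ extends to a function analytic on a full (unpunctured) neighborhood $U$ of $x$ in $\mathbb{C}$. It then suffices to show that $\nu$ vanishes on a real neighborhood of $x$; combined with the trivial facts $(w\delta_x)(\{x\}) = w$ and $(w\delta_x)((x-\delta,x+\delta)) = w$, this yields both assertions.

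To show $\nu$ is locally null I would invoke the Stieltjes inversion formula for signed measures of finite total variation (Theorem B.8 of \cite{bai2009spectral}, already used in \eqref{inversion_st}): for continuity points $a<b$ of the distribution function of $\nu$, $\nu((a,b]) = \lim_{\ep\downarrow 0}\pi^{-1}\int_a^b \Im s_\nu(y+i\ep)\,dy$. Fix $\delta>0$ with $[x-\delta,x+\delta]\subset U$. On this compact interval $s_\nu$ is analytic, hence uniformly continuous on a neighborhood, so $s_\nu(y+i\ep)\to s_\nu(y)$ uniformly in $y\in[x-\delta,x+\delta]$ as $\ep\downarrow 0$, and therefore $\nu((a,b]) = \pi^{-1}\int_a^b \Im s_\nu(y)\,dy$ for continuity points $a<b$ in the interval. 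The key remaining point is that $s_\nu$ is real on $U\cap\mathbb{R}$: because $\nu$ is a real signed measure one has the reflection symmetry $s_\nu(\bar z) = \overline{s_\nu(z)}$ for $z\notin\mathbb{R}$, and since $z\mapsto \overline{s_\nu(\bar z)}$ is analytic on $U$ and agrees with $s_\nu$ on the nonempty open set $U\setminus\mathbb{R}$, uniqueness of analytic continuation forces $\overline{s_\nu(\bar z)} = s_\nu(z)$ throughout $U$; evaluating at real $z$ gives $s_\nu(y)\in\mathbb{R}$. Hence $\Im s_\nu \equiv 0$ on $U\cap\mathbb{R}$ and $\nu((a,b]) = 0$ for all continuity points $a<b$ in $(x-\delta,x+\delta)$.

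Finally, choosing sequences of continuity points $a_n\downarrow x-\delta$ and $b_n\uparrow x+\delta$ (the distribution function of $\nu$, being a difference of monotone functions, has at most countably many discontinuities), we get $\nu((a_n,b_n]) = 0$ with $(a_n,b_n]\uparrow(x-\delta,x+\delta)$, so $\nu((x-\delta,x+\delta)) = 0$ by continuity from below of the finite signed measure $\nu$. Consequently $\mu((x-\delta,x+\delta)) = w + \nu((x-\delta,x+\delta)) = w$, and since $\{x\}\subset(x-\delta,x+\delta)$ where $\nu$ carries no mass, also $\mu(\{x\}) = w$, as claimed. I expect the only delicate steps to be the Schwarz-reflection argument showing the analytic extension of $s_\nu$ is real-valued on $U\cap\mathbb{R}$, and the interchange of limit and integral in the inversion formula, both of which are routine. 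As an alternative one could bypass the inversion formula by integrating $s_\mu$ over a small rectangular contour around $x$ and evaluating it two ways—by the residue theorem it equals $-w$, and by Fubini it equals $-\mu$ of the enclosed open real interval—but this requires an integrability estimate to justify the Fubini step, so the inversion-formula route seems cleaner.
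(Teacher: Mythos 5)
Your argument is correct, but it takes a genuinely different (though closely related) route from the paper's. The paper works directly with $\mu$: it integrates $s_\mu$ clockwise around a thin rectangle $[a,b]\times[-\varepsilon,\varepsilon]$ straddling $x$, evaluates the contour integral as $2\pi i w$ by the residue theorem, uses the symmetry $s_\mu(\bar z)=\overline{s_\mu(z)}$ to collapse it to $2i\int_a^b\Im\, s_\mu(y+i\varepsilon)\,dy+O(\varepsilon)$, and then identifies the $\varepsilon\downarrow 0$ limit with $2\pi i(\Delta(b)-\Delta(a))$ via the inversion formula, yielding $\mu((a,b])=w$ for all nearby $a<x<b$. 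You instead subtract the candidate atom, $\mu=w\delta_x+\nu$, observe that the residue hypothesis is exactly the cancellation of the pole of $s_\nu$, and reduce the lemma to the statement that a finite signed measure whose Stieltjes transform continues analytically across a real interval carries no mass there, which you prove by Schwarz reflection plus the inversion formula. Both proofs rest on the same two ingredients (the inversion formula \eqref{inversion_st} and the reflection symmetry), so the difference is organizational rather than substantive; your decomposition is arguably the cleaner conceptual picture, and your closing remark correctly identifies the paper's contour computation as the alternative. Two small points of care. First, your cancellation step implicitly assumes the singularity of $s_\mu$ at $x$ is a simple pole (otherwise adding $w/(z-x)$ removes the residue but not the singularity); the paper's contour argument only ever uses the residue. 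This is harmless here—in the application the singularity is manifestly a simple pole—but it is the one place where the direct contour computation is marginally more robust. Second, when you conclude $\mu(\{x\})=w$, note that for a \emph{signed} measure $\nu((x-\delta,x+\delta))=0$ alone does not force $\nu(\{x\})=0$; you should appeal to the fact that $\nu((a,b])=0$ for a dense set of endpoints makes $\nu$ the zero measure on the whole interval (e.g., shrink $(a_n,b_n]\downarrow\{x\}$ and use continuity from above), which your argument does in fact supply.
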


To use this lemma, we evaluate the residue as $\ld i \ep \cdot s(x + i\ep)$. Since the $s_k$ are distinct, all terms $1+s_k v(z)$ have finite limits, except $1+s_j v(z)$, which tends to 0 by assumption. Similarly, all terms $1+t_k v(z)$ have finite limits. Therefore, in the limit as $\ep\to0$, $1/[1+s_jv(x + i\ep)]$ diverges. However, by definition and by continuity of $v'$, $i\ep/[v(x + i\ep)+1/s_j] \to 1/v'(x)$. Therefore

$$\ld i \ep \cdot s(x + i\ep) = - v'(x) \gamma \ld i \ep \frac{u_js_j}{1+s_jv(x + i\ep)}  = -\gamma u_j.$$
Therefore, by Lemma \ref{lemma:pointmass}, $\dfg$ has point mass $\gamma u_j$ at $s_j$.

\end{itemize}
This finishes the proof. 
\end{proof}

\section*{Acknowledgments}

We are grateful to David Donoho for his encouragement, inspiring guidance and feedback on the manuscript; and to Iain Johnstone for his enthusiastic interest and helpful suggestions.

{\small
\setlength{\bibsep}{0.2pt plus 0.3ex}
\bibliographystyle{plainnat-abbrev}
\bibliography{references}
}

\section{Supplement}

These sections contain proof details (Sec. \ref{Proof details}), implementation details (Sec. \ref{Implementation}), and empirical motivation  (Sec. \ref{empirical_evidence}). 

\section{Proof details}
\label{Proof details}

\subsection{Proof of Lemma \ref{cont_st}}
\label{pf:cont_st}

\begin{proof}
Note that all $v_\ep$ and $v$ are analytic on $\mathbb{C}\setminus\mathbb{R}^+$, since $\mathcal{F}(H)$ is supported on $\mathbb{R}^+$. Therefore it is enough to show the convergence for real $z = -\lambda$ in a set $(-\infty,M)$, for some $M<0$. The convergence on $\mathbb{C}\setminus\mathbb{R}^+$ follows by Vitali's Theorem (see Lemma 2.14 in \cite{bai2009spectral}). Now, by slightly rewriting Eq \eqref{raw_st_eq}, we find
\begin{equation}
\label{raw_st_eq_2}
\left(\frac{v_\ep}{v} - 1\right)
\left\{1 - \gamma \int \frac{t^2 v_\ep v  \, dH(t)}{[1 + tv_\ep][1 + tv]}\right\} = - \ep\gamma \int \frac{t v_\ep\, d\nu(t)}{1 + tv_\ep}.
\end{equation}
We will show that the term in the curly braces in \eqref{raw_st_eq_2} is bounded, while the right hand side tends to 0 as $\ep\to0$. First, note that $v(-\lambda) = \EE{(X+\lambda)^{-1}}\le1/M$. Next, because $u \to (tu)/(1+tu)$ is increasing for $t\ge0$, 
$$ 0 \le \int \frac{t^2 v_\ep v  \, dH(t)}{[1 + tv_\ep][1 + tv]} \le  \int \frac{(t/\lambda)^2  \, dH(t)}{[1 + t/\lambda]^2}=\int \frac{t^2  \, dH(t)}{[\lambda + t]^2} \to 0,$$
as $\lambda \to \infty$, by the dominated convergence theorem. Therefore, for $\lambda >M$ large enough, the term in the curly braces in \eqref{raw_st_eq_2} is contained in a bounded interval $(a,1)$ for some $a>0$. 
Similarly, 
$$
A(\lambda) = \left|\int \frac{t v_\ep\, d\nu(t)}{1 + tv_\ep}\right| \le \int \left| \frac{t v_\ep\, }{1 + tv_\ep}\right| d|\nu|(t) \le \int \left| \frac{t/\lambda }{1 + t/\lambda}\right|  d|\nu|(t) \to 0,
$$
as $\lambda \to \infty$, by the dominated convergence theorem, where we have denoted the measure $|\nu| = H+G$. For $M$ large, we have $A(\lambda)<1$ for all $\lambda>M$. Therefore the right hand side of \eqref{raw_st_eq_2} tends to 0 as $\ep\to0$. It follows that $v_\ep/v-1\to0$. Since $v\neq0$, this shows that $v_\ep \to v$, as required.
\end{proof}

\subsection{Proof of Lemma \ref{ell2_opt}}
\label{pf:ell2_opt}

\begin{proof}
Let $\overline{\textnormal{Im}}(K)$ be the closure of the image of $K$. We treat three cases. 

\subsubsection{$\Delta\notin \overline{\textnormal{Im}}(K)$}
If $\Delta\notin \overline{\textnormal{Im}}(K)$, then there is a function $g\in L^2(\I)$ such that $\la g,Kg\ra=0$, and $\la g,\Delta \ra>0$. Indeed, let $\Pr$ be the orthogonal projection operator onto $\overline{\textnormal{Im}}(K)$, well-defined since it is a closed subspace, and consider $g = \Delta  - \Pr(\Delta ) \neq 0$.  $g$ is orthogonal to $\overline{\textnormal{Im}}(K)$, so $\la g,Kg\ra=0$. Further $\la g,\Delta \ra = \la g,g \ra>0$. Choosing $\tilde g = -g$, we have $\theta(\tilde g) = +\infty$. This shows that the efficacy is $+\infty$ in this case. %Clearly, using a linear spectral statistic $\varphi$ with $\varphi'=\tilde g$ constructed above, the asymptotic power is unity.

Further, a $g$ with the above properties can exist only in this case. Indeed, suppose that $\la g, Kg \ra=0$ and $\Delta\in \overline{\textnormal{Im}}(K)$. Then there is a sequence $\Delta_n = Kg_n$ with $\Delta_n\to \Delta$, implying that $|\la g, \Delta \ra| $ $\le \lim\sup |\la g, \Delta_n \ra| $ $\le \lim\sup |\la g, Kg \ra|^{1/2} $ $|\la g_n, Kg_n \ra|^{1/2} =0$. Hence $\la g, \Delta \ra=0$. Therefore, $\Delta\notin \overline{\textnormal{Im}}(K)$ is the only case when there is a $g$ such that $\theta(g)=+\infty$. In the remaining cases we can restrict to $g$ such that $|\la g, Kg \ra|>0$ without decreasing the objective. 

To finish this case, it remains to find the optimal LSS. If the supremum is $+\infty$, the derivative $g$ of an optimal LSS must obey $\la g,Kg\ra=0$ and $\la g,\Delta \ra<0$. Defining $K^{1/2}$ by its spectral decomposition, which exists since $K$ is a compact self-adjoint operator, the first equality is equivalent to $\|K^{1/2}g\|=0$, i.e., $K^{1/2}g=0$. Clearly the last equation is also equivalent to $Kg=0$, which proves the desired claim---$Kg=0$, $\la g,\Delta \ra<0$---for the optimal LSS.

\subsubsection{$\Delta\in \textnormal{Im}(K)$}
In the remaining case, suppose first that $\Delta\in \textnormal{Im}(K)$, so $Kl=-\Delta$ for some $l\in L^2(\I)$. Then, we have using the Cauchy-Schwarz inequality that
$$\mu = h \la Kl,g\ra \le h \|K^{1/2}l\| \|K^{1/2}g\| = h\sigma \|K^{1/2}l\|.$$
If $\sigma^2>0$, this shows that $\mu/\sigma \le h \|K^{1/2}l\|$. Since this bound is true for all $l$, we will choose $l$ to make the bound tight. Let $l_0 = - K^+\Delta$, where $K^+$ is the generalized inverse of $K$. Since $\Delta \in \textnormal{Im}(K)$, $K^+\Delta$ is well-defined \citep[see e.g.,][p. 115]{groetsch1977generalized}, and is the minimum norm solution to the equation $Kl=-\Delta$. In terms of $l_0$, we can write the upper bound as $h\la Kl_0,l_0\ra^{1/2} = h \la \Delta, K^+\Delta\ra^{1/2}$. 
 
Hence, for any test statistic, the efficacy $\mu/\sigma$ is at most $\theta^* = h  \la \Delta, K^+\Delta\ra^{1/2}$. The optimum is achieved when equality occurs in Cauchy-Schwarz, i.e., $\eta K^{1/2}l_0 = K^{1/2}g$ for some $\eta>0$. Hence $g = - \eta K^{+}\Delta + u$ for some $u$ such that $K^{1/2}u=0$. Since $K^{1/2}u=0$ if and only if $Ku=0$, this implies that the optimal set is described by $g$ such that $Kg = -\eta \Delta$, for $\eta>0$. 

The case $\sigma^2=0$, which was not treated above, occurs when $K^{1/2}g=0$, which implies $Kg=L=0$. In this case, clearly the objective value equals 0 identically, and the maximum is 0. Any test statistic has zero efficacy. The formula $\theta^* = h  \la \Delta, K^+\Delta\ra^{1/2}=0$ is still valid.

\subsubsection{$\Delta\in \overline{\textnormal{Im}}(K)$, but $\Delta\notin \textnormal{Im}(K)$}

The last case is when $\Delta\in \overline{\textnormal{Im}}(K)$, but $\Delta\notin \textnormal{Im}(K)$. In this case, we find it simplest to use the spectral decomposition of $K$ explicitly. In order to make the proof as geometric as possible, so far we avoided its use; however the previous properties can be also be derived from the spectral decomposition. Let then $k_1 \ge k_2 \ge \ldots \ge 0$ be the eigenvalues of $K$, which obey $k_i\to0$ since $K$ is compact. 

Rotating to the eigenbasis of $K$, we can write the objective as $\theta(g)=-h \sum_{i=1}^{\infty} g_i l_i / ( \sum_{i=1}^{\infty} g_i^2 k_i)^{1/2}$, where $g_i,l_i$ are the coefficients of $g$ and $\Delta$ in the eigenbasis. With $T = \{i: k_i>0\}$, clearly $\Delta\in \overline{\textnormal{Im}}(K)$ if and only if $l_i=0$ for $i \notin T$. Furthermore, since $\Delta\notin \textnormal{Im}(K)$, we must have $\sum_{i\in T} l_i^2/k_i = +\infty$. In particular, taking $T_M = T\cap \{1,\ldots,M\}$, defining $g^M$ by $g^M_i = - l_i/k_i$ for $i\in T_M$, and $0$ otherwise, we have $\theta(g^M)=h\cdot(\sum_{i\in T_M} l_i^2/k_i)^{1/2}\to+\infty$ as $M\to\infty$. Therefore, the objective is unbounded in this case. This finishes the proof.
\end{proof}

\subsection{Proof of Lemma \ref{thm:lss_part3}}
\label{pf:lss_part3}
\begin{proof}
Since $\mathcal{H}(\I) \subset \mathcal{W}(\I)$, clearly $\theta^*(\mathcal{H}(\I)) \le \theta^*(\mathcal{W}(\I))$. To show equality, it is enough to exhibit a sequence of functions $\varphi_n\in\mathcal{H}(\I)$ such that $\theta(\varphi_n) \uparrow  \theta^*(\mathcal{W}(\I))$. For this it is enough to show that for any $\varphi\in \mathcal{W}(\I)$, there is a sequence of functions $\varphi_n\in\mathcal{H}(\I)$ such that $\theta(\varphi_n) \to  \theta(\varphi)$. Note, specifically, that this is still enough even in the corner case when $\theta^*(\mathcal{W}(\I)) = +\infty$.

It is known that the set of analytic functions $ \mathcal{H}(\I)$ is dense in $L^2(\I)$ in the topology induced by the $L^2$ norm.  Therefore for any $\varphi\in\mathcal{W}(\I)$, there is a sequence $g_n \in \mathcal{H}(\I)$, such that $\|g_n-\varphi'\|_2\to0$. Let $G_n \in \mathcal{H}(\I)$ be indefinite integrals of $g_n$. Since the numerator $\mu_{\varphi}$ of $\theta(\varphi)$ is a linear function of $\varphi'$, which is in particular $L^2$-continuous, it follows that $\mu_{G_n} \to \mu_{\varphi}$. Since the operator $K$ is compact, the denominator $\sigma_{\varphi} = \la \varphi',K\varphi'\ra^{1/2}$ is also continuous, therefore $\sigma_{G_n} \to \sigma_{\varphi}$. If $\sigma_{\varphi}>0$, it follows that $\theta(G_n)\to\theta(\varphi)$, which proves the desired claim. If $\sigma_{\varphi}=0$, then the conclusion follows from the definition of $\theta(\varphi)$: indeed, if $\mu_{\varphi}\le0$, then we can take the sequence $\varphi_n(x)=0$, while if $\mu_{\varphi}>0$, then the above sequence $G_n$ will satisfy $\theta(G_n) \to +\infty$, as required. This finishes the proof. 
\end{proof}

\subsection{Proof of Lemma \ref{lemma:pointmass}}
\label{pf_lemma:pointmass}
\begin{proof}
Take a small rectangular contour $C$ around $x$ in the following way: let $a<x<b$, and let the contour move clockwise along the edges of the rectangle with vertices $(a,\pm\ep)$, $(b,\pm\ep)$. Take $a,b$ close enough to $x$ that $s_\mu$ is analytic at all points but $x$ in the rectangle. 
Using that $s_\mu(\bar z) = \bar s_\mu(z)$, we can express the contour integral of $s_\mu$ as
\begin{align*}
\oint_C s_\mu(z) dz &= \int_a^b s_\mu(y+i\ep) dy + \int_b^a s_\mu(y-i\ep) dy +O(\ep) \\
&= 2i \int_a^b \Im(s_\mu(y+i\ep)) dy +O(\ep).
\end{align*}
Combining the above equation with the Cauchy residue formula and with the inversion formula for Stieltjes transforms \eqref{inversion_st}, we obtain as $\ep\to0$ that $\Delta(b)-\Delta(a) = w$ (where $\Delta$ is the distribution function). Since this holds for all $a,b$ in a neighborhood of $x$ such that $a<x<b$, it follows that $\mu$ has a point mass $w$ at $x$. 
\end{proof}

\subsection{Proof of Lemma \ref{ls3}}
\label{pf:equi_lss}

\begin{proof}
Note first that  $\hsigma^2 - m_1 = p^{-1}(\tr\hSigma - p \mathcal{F}_{\gamma_p}(x)) + \mathcal{F}_{\gamma_p}(x) - \mathcal{F}_{\gamma}(x) = O_P(p^{-1})$. Indeed, for the first term, the rate follows using the Bai-Silverstein CLT under both the null and alternative. For the second term, note that $\mathcal{F}_{\gamma_p}(x) = \int x d\mathcal{F}_{\gamma_p}(x) = \int x dH_p(x)$, where $H_p$ is the $p$-th population spectrum. %Under the null $H_p=H$, thus $\mathcal{F}_{\gamma_p}(x) = \mathcal{F}_{\gamma}(x)$. 
However, $\mathcal{F}_{\gamma_p}(x) = \int x dH(x)+ p^{-1} h \int x d\Delta_i(x)$, where $\Delta_i = G_i-H$, with $i=0$ under the null and $i=1$ under the alternative, so that $\mathcal{F}_{\gamma_p}(x)-\mathcal{F}_{\gamma}(x) = O(p^{-1})$. Since $m_1>0$, we also have $1/\hsigma^2 - 1/m_1 = O_P(p^{-1})$.

The following analysis applies \emph{both} under the null and the alternative; in particular ``almost surely'' means ``almost surely'' under both the null and the alternative. 
The sample eigenvalues $\lambda_i$ all belong to the compact interval $\I$, almost surely \citep{bai2009spectral}. Moreover, $\hsigma^2 \to m_1$ almost surely. This is analogous to the error rate above: First, $p^{-1}(\tr\hSigma - p \mathcal{F}_{\gamma_p}(x)) \to 0$ almost surely, by the strong law of large numbers. Combining this with the convergence $\smash{\mathcal{F}_{\gamma_p}(x)-\mathcal{F}_{\gamma}(x) = O(p^{-1})}$ that we saw above, we obtain the desired claim that $\hsigma^2 \to m_1$ almost surely. 

Therefore, $\lambda_i/\hsigma^2$ belong to the compact interval $\I/m_1$ almost surely. By assumption $\varphi$ is analytic on $\I/m_1$, and in particular it is twice differentiable with uniformly bounded second derivative. Therefore, 
$$
\varphi\left(\frac{\lambda_i}{\hsigma^2}\right) - \varphi\left(\frac{\lambda_i}{m_1}\right) = \left(\frac{\lambda_i}{\hsigma^2} - \frac{\lambda_i}{m_1}\right) \varphi'\left(\frac{\lambda_i}{m_1}\right) + O\left(\left(\frac{\lambda_i}{\hsigma^2} - \frac{\lambda_i}{m_1}\right)^2\right). 
$$
 From the above discussion, the error term is of order $O_P(\lambda_i^2/p^2)$. Summing over $i=1,\ldots,p$, 
\begin{align*}
R_p = S_p(\varphi) - \sum_i \varphi\left(\frac{\lambda_i}{m_1}\right) & = 
\left(\frac{1}{\hsigma^2} - \frac{1}{m_1}\right) \sum_i\lambda_i\varphi'\left(\frac{\lambda_i}{m_1}\right) 
+ O_P\left(p^{-2}\sum_i \lambda_i^2\right). \\
&
= \left(\frac{1}{\hsigma^2} - \frac{1}{m_1}\right)  \sum_i\lambda_i\varphi'\left(\frac{\lambda_i}{m_1}\right) 
+ O_P(p^{-1}). 
\end{align*}
Studying the first term on the right hand side, we rewrite it by centering as
\begin{align*}
&\left(\frac{m_1}{\hsigma^2} - 1\right) \left[ \sum_i\frac{\lambda_i}{m_1}\varphi'\left(\frac{\lambda_i}{m_1}\right) - p \mathcal{F}_{\gamma_p} \left(  \frac{x}{m_1} \varphi'\left(\frac{x}{m_1}\right)\right) \right] - \\
& \frac{\tr\hSigma-p\,m_1}{\hsigma^2} \mathcal{F}_{\gamma_p} \left(  \frac{x}{m_1} \varphi'\left(\frac{x}{m_1}\right)\right) . 
\end{align*}
The first term is $O_P(p^{-1})$, because $|m_1/\hsigma^2 - 1|=O_P(p^{-1})$, and the term multiplying it is a properly centered linear spectral statistic, which is $O_P(1)$ by the Bai-Silverstein CLT. 

For the second term, write 
\begin{align*}
\frac{\tr\hSigma-p\,m_1}{\hsigma^2} & = 
\frac{\tr\hSigma-p\,m_{\gamma_p}}{\hsigma^2} + 
\frac{p(m_{\gamma_p}-m_1)}{\hsigma^2} \\
& = 
\frac{\tr\hSigma-p\,m_{\gamma_p}}{m_1} + 
\frac{p(m_{\gamma_p}-m_1)}{m_1} + O_p(p^{-1}).
\end{align*}
The variability of this expression comes from $\tr(\hSigma)$, and is asymptotically same as that of the LSS $T_p(x/m_1)$. Putting this together with the expression for $R_p$, we see that $S_p(\varphi)$ has asymptotically the same variance as the LSS $T_p(j)$, with $j(x) = \varphi(x/m_1)-x/m_1\fg[x/m_1\varphi'(x/m_1)]$, as stated in theorem, under both the null and the alternative. 

Finally, we need to compute the asymptotic mean shift of $S_p(\varphi)$, i.e., the asymptotic difference between the centering terms under the alternative and the null.  From the above argument we see that the mean shift of $R_p$ equals the difference between the following two expressions evaluated for $\Delta_i=G_i-H$, with $i=1$ (alternative), and $i=0$ (null):
\begin{align*}
\lim_{p\to\infty} -\frac{p(m_{\gamma_p}-m_1)}{m_1} \mathcal{F}_{\gamma_p} \left(  \frac{x}{m_1} \varphi'\left(\frac{x}{m_1}\right)\right) = 
-\frac{h  \int x \, d\Delta_i(x)}{m_1} \mathcal{F}_{\gamma} \left(  \frac{x}{m_1} \varphi'\left(\frac{x}{m_1}\right)\right).
\end{align*}
This equals the asymptotic mean shift of the statistic $T_p(j_0)$, where 
$$j_0(x) = -\frac{x}{m_1}\mathcal{F}_{\gamma} \left(  \frac{x}{m_1} \varphi'\left(\frac{x}{m_1}\right)\right).$$
Putting this together with the fact that the mean shift of $S_p(\varphi)$ equals the sum of the mean shifts of $R_p$ and $T_p( \varphi(x/m_1))$, we obtain exactly that the mean shift of $S_p(\varphi)$ asymptotically equals that of $T_p(j)$. This finishes the proof. 
\end{proof}

\subsection{Proof of Lemma \ref{linearize}}
\label{pf:linearize}

\begin{proof} The Bai-Silverstein CLT \citep[][ Thm. 1.1]{bai2004clt} states that the statistics $X_p(\varphi) = T_p(\varphi) - p\mathcal{F}_{\gamma_p}(\varphi)$ are asymptotically normal, and in particular, $O_P(1)$. Therefore, by a Taylor series expansion around $a_p = (\mathcal{F}_{\gamma_p}(\varphi), \mathcal{F}_{\gamma_p}(\psi))$, we see 
\begin{align*}
y\left(p^{-1}T_p(\varphi), p^{-1} T_p(\psi)\right) &= y\left(\mathcal{F}_{\gamma_p}(\varphi) + p^{-1}X_p(\varphi), \mathcal{F}_{\gamma_p}(\psi) + p^{-1}X_p(\psi)\right) \\
&= y\left(a_p \right) + \partial_1y(a_p)  p^{-1}X_p(\varphi) + \partial_1y(a_p)  p^{-1}X_p(\psi) \\
 &+ o\left(\max\{ |p^{-1}X_p(\varphi)|,|p^{-1}X_p(\psi)|\}\right). 
\end{align*}

Now by assumption $\partial_1y(a) X_p(\varphi) + \partial_2y(a) X_p(\psi)$ has a non-trivial limit distribution, which follows from the Bai-Silverstein CLT and the assumption that $\sigma_h^2>0$.  Since $c_p = p\cdot y(a_p)$ is a constant and $ o(\max\{ |X_p(\varphi)|,|X_p(\psi)|\}) \to 0$ in probability, we see that 
\begin{align*}
p \cdot y\left(p^{-1}T_p(\varphi), p^{-1} T_p(\psi)\right) &= \partial_1y(a_p) X_p(\varphi) + \partial_2y(a_p) X_p(\psi) + c_p + o_P(1) \\
&= \partial_1y(a) X_p(\varphi) + \partial_2y(a) X_p(\psi) + c_p + o_P(1),
\end{align*}

because $(\partial_1y(a_p) - \partial_1y(a)) X_p(\varphi) \to 0$ in probability by Slutsky's theorem. Indeed, the constants $\mathcal{F}_{\gamma_p}(\varphi) \to \fg(\varphi)$, because the Marchenko-Pastur map is weakly continuous as a function of $\gamma$ \citep{silverstein1995analysis}. Therefore $a_p\to a$. Since $y$ is continuously differentiable at $a$, it then follows that $(\partial_1y(a_p) - \partial_1y(a))\to0$. Now, since $X_p(\varphi)  = O_P(1)$, we conclude that $(\partial_1y(a_p) - \partial_1y(a)) X_p(\varphi) \to 0$ in probability, as claimed. 

After proper centering, the above calculations imply that $p \cdot Y_p= T_p(j) + d_p + o_P(1)$, for a sequence of constants $d_p$, as desired.
\end{proof}

\subsection{Proof of LSS equivalence for examples in Section \ref{examples}}
\label{pf:examples}

\begin{proof}

We use the linearization lemma \ref{linearize} to show the equivalence of classical tests with LSS. We denote $m_i=\fg(g_i)$.
\begin{itemize}

\item For the log-LRT of sphericity, $T = p \log(p^{-1}\tr \hSigma) - \log\det\hSigma$, we take $y(r,s) = \log(r)-s$, and the two LSS $\varphi(x) = x$ and $\psi(x) =  \log(x)$. Then $T = p\cdot y(p^{-1}T_p(\varphi),p^{-1}T_p(\psi))$. Further $\partial_1y(r,s) = 1/r$, and $\partial_2y(r,s) = -1$, while $a = (\fg(\varphi),\fg(\psi)) = (m_1,\fg(\log(x))$, so that the equivalent LSS is $j(x) =\partial_1 y(a) \cdot \varphi(x) + \partial_2 y(a)\cdot \psi(x) = x/m_1-\log(x)$. By definition, this is also equivalent to $\varphi_2(x) = x/m_1 - \log(x/m_1)-1$. 

This result can also be deduced from Lemma \ref{ls3}, by taking $\varphi(x) = \log(x)$. Then the associated LS$^3$ is $j(x) = \varphi(x/m_1) - x/m_1 \cdot \fg(x/m_1\cdot m_1/x) = \log(x/m_1) - x/m_1$.

The remaining examples are similar. 
\item \cite{john1971optimal}'s test of sphericity can be shown to be equivalent to $\tr \hSigma^2/ (\tr \hSigma)^2$ after some algebra. We then take $y(r,s) = r/s^2$ and the two LSS $\varphi(x) = x^2$ and $\psi(x) = x$. Clearly $\partial_1y(r,s) = 1/s^2$, and $\partial_2y(r,s) = -2r/s^3$, while $a = (\fg(\varphi),\fg(\psi)) = (\fg(g_2),\fg(g_1)) = (m_2,m_1)$, so that the equivalent LSS is $j(x) =\partial_1 y(a) \cdot \varphi(x) + \partial_2 y(a)\cdot \psi(x) = x^2/m_1^2-2x \cdot m_2/m_1^3$. Multiplying this by $m_1^2$ leads to an equivalent LSS, and gives the claimed result. 

Again, this can be also deduced from Lemma \ref{ls3}, by taking $\varphi(x) = x^2$. Then the associated LS$^3$ is $j(x) = \varphi(x/m_1) - x/m_1 \fg(x/m_1 \cdot 2 x/m_1) = (x/m_1)^2 - 2x \cdot m_2/m_1^3$, as required.

\item \cite{ledoit2002some}'s test of identity is based on $\tr{(\hSigma-I_p)^2} - n^{-1} (\tr{\hSigma})^2$. The first term is a LSS corresponding to $(x-1)^2$, while the second is a univariate function $\gamma_p y(\varphi)$, where $\gamma_p = p/n$, $y(r) = r^2$, and the LSS  is $\varphi(x) = x$. Now by a simpler univariate version of Lemma \ref{linearize}, and denoting asymptotic equivalence by $\equiv$, we have $y(\varphi)\equiv 2 m_1 T_p( g_1)$. Therefore, by Slutsky's theorem, $\gamma_p y(\varphi)\equiv  2 \gamma m_1T_p( g_1)$. Finally, by additivity, the whole test is equivalent to the LSS with $(x-1)^2 - 2 \gamma m_1 x$. 

\item The test from \cite{fisher2010new} based on $p \tr{\hSigma^4}/( \tr{\hSigma^2})^2$ can be handled similarly to John's test of sphericity. 
\end{itemize}
\end{proof}

\section{Implementation}
\label{Implementation}

\subsection{Computation}
\label{comput}

We now explain the computational details of our method. A MATLAB implementation, along with the code to reproduce our computational experiments, is available at \url{github.com/dobriban/eigenedge}. %Therefore, here we only outline the main steps. 

The computational problem is the following: Given a null distribution $\smash{H = d^{-1}}$ $\smash{\sum_{i=1}^{d} \delta_{t_i}}$, spikes $G_i = $ $h^{-1}\smash{\sum_{j=1}^{h} \delta_{s^i_j}}$, $i=0,1$, and an aspect ratio $\gamma$, compute the optimal LSS. We also include the sample size $n$ as an optional input, that is only needed to adjust the finite sample performance of the optimal LSS above the PT (see Sec. \ref{sec:above_pt}). For simplicity we take all spikes in $G_0$ subcritical---which is the only case we need in simulations---but the general case is similar. We will outline the needed steps and collect them in Algorithms \ref{optimal_LSS_main_alg}-\ref{optlss_alg}, giving the key parameter choices in Table \ref{params}.

\begin{table}[]
\centering
\caption{Parameter choices.}
\label{params}
\scalebox{0.85}{
\begin{tabular}{|l|l|l|}
\hline
Parameter & Meaning  & Choice  \\ \hline
$s_+$  & critical value for switching above PT & $0.75\cdot(1+\sqrt{\gamma})\cdot \tilde a_{PT}$ \\ \hline
$s_-$   & substitute spike  & $0.99\cdot \tilde a_{PT}$  \\ \hline
$\varepsilon$ & {\sc Spectrode} accuracy  &$5\cdot10^{-6}$ \\ \hline
$\ep_1$ & accuracy in collocation & $\max(10^{-8},c_0\cdot\ep)$ \\ \hline
$c_0$ & accuracy multiplier & $10^{-2}$ \\ \hline
$c_1$ & diagonal multiplier for $k$ & $1.5$ \\ \hline
$r$ & regularization of kernel & $10^{-4}\cdot\tr(K_{I0})/I$ \\ \hline
$n_{SD}$ & number of SDs in Epanechnikov & 3 \\ \hline
\end{tabular}
}
\end{table}

\begin{algorithm}
\caption{\textsc{Optimal LSS}: main wrapper for computation of the optimal Linear Spectral Statistic}
\begin{algorithmic}[1]
\Procedure{Optimal LSS}{}
\BState \textbf{input}
\State bulk $H = d^{-1}\sum_{i=1}^{d} \delta_{t_i}$ 
\State spikes $G_i = h^{-1}\sum_{j=1}^{h} \delta_{s^i_j}$, $i=0,1$
\State aspect ratio $\gamma$
\State sample size $n$ (optional)
\BState \textbf{begin}
\State Call \textsc{Spectrode} to compute:
\State $(x_m,\tilde v(x_m)), m=1\ldots,M  \gets$ approximate companion ST of $\fg(H)$
\State $\tilde S = \cup_j [\tilde l_j,  \tilde u_j]$, $j=1,\ldots,J \gets$ approximate support of $\fg(H)$
\State Define spike forward map $\psi(s) = s[1+\gamma d^{-1}\sum_{i=1}^dt_i/(s-t_i)]$
\State Define sample spikes $\psi_j=\psi(s_j^1)$
\If{there is $\psi_j  \notin \tilde S$}  
\State solve LSS above the phase transition by Alg. \ref{optimal_LSS_above_PT}
\Else 
\State solve LSS below the transition by Alg. \ref{optimal_LSS_below_PT}
\EndIf
\BState \textbf{return}  $(x_m,\tilde\varphi(x_m)), m=1\ldots,M  \gets$ approximate optimal LSS
\EndProcedure
\end{algorithmic}
\label{optimal_LSS_main_alg}
\end{algorithm}

\begin{algorithm}
\caption{\textsc{Optimal LSS}:  above the PT}
\begin{algorithmic}[1]
\Procedure{Optimal LSS above PT}{}
\BState \textbf{begin}
\State initialize LSS $\tilde\varphi(x_m)=0$
\If{the number of spikes is $h=1$}  
\If{the top spike is so small that $s_1^1< s^+$}  
\State $s_1^1\gets s^-$
\State solve LSS below the transition by Alg. \ref{optimal_LSS_below_PT}
\EndIf
\Else 
\State define Epanechnikov kernel $K_e(x) =\max(0,1-x^2)$
\State let $n=(d+h)/\gamma$ if it is not already defined
\For{each sample spike $\psi_j \notin \tilde S$}
\State set asy variance of spike $\hsigma_j^2 = 2[s^1_j]^2\psi'(s^1_j)$ 
\State define interval width $w = n_{SD} \cdot n^{-1/2}\hsigma_j$
\State set LSS for all $x_m$ in the neighborhood of the sample spike:
\State $\tilde\varphi(x_m) = K_e((x_m-\psi_j)/w)$
\EndFor
\State if there are extremal spikes, extend LSS to a constant away from $S$
\EndIf
\BState \textbf{return}  $(x_m,\tilde\varphi(x_m)), m=1\ldots,M  \gets$ approximate optimal LSS
\EndProcedure
\end{algorithmic}
\label{optimal_LSS_above_PT}
\end{algorithm}

\begin{algorithm}
\caption{\textsc{Optimal LSS}:  below the PT}
\begin{algorithmic}[1]
\Procedure{Optimal LSS below PT}{}
\BState \textbf{begin}
%\State define operator $K$ via kernel $k(x,y)$ from \eqref{kernel}, substituting $\tilde v$ for $v$
\State compute weak derivatives $\Delta_i = \dfg(H,G_0)$, $i=0,1$ using Alg. \ref{wd_alg}
\State solve optimal LSS equation $K(g) = - \Delta$ using Alg. \ref{optlss_alg}. ($\Delta:=\Delta_1-\Delta_0$)
\State numerically integrate $g$ to get LSS $\tilde\varphi(x_m)$
\BState \textbf{return}  $(x_m,\tilde\varphi(x_m)), m=1\ldots,M  \gets$ approximate optimal LSS
\EndProcedure
\end{algorithmic}
\label{optimal_LSS_below_PT}
\end{algorithm}

\begin{algorithm}
\caption{\textsc{Weak Derivative}:  computation of weak derivative of MP map}
\begin{algorithmic}[1]
\Procedure{Weak Derivative}{}
\BState \textbf{input}
\State bulk $H = d^{-1}\sum_{i=1}^{d} \delta_{t_i}$ 
\State spikes $G = h^{-1}\sum_{j=1}^{h} \delta_{s_j}$ below PT
\State aspect ratio $\gamma$
\State $(x_m,\tilde v_m), m=1\ldots,M  \gets$ approx companion ST of $\fg(H)$, $\tilde v_m = \tilde v(x_m)$ (optional)
\BState \textbf{begin}
\If{$(x_m,\tilde v_m)$ is not provided}  
\State call \textsc{Spectrode} to compute  $(x_m,\tilde v_m), m=1\ldots,M$
\EndIf
\State define the derivative map 
$$d(v) = \left[\frac{1}{v^2} - \frac{\gamma}{d}\sum_{i=1}^{d} \frac{t_i^2}{(1+t_iv)^2}\right]^{-1}$$
\State compute approximate ST of weak derivative using \eqref{weak_st}: 
$$\tilde s(x_m) = -\gamma \, d(\tilde v_m ) \left[\frac{1}{h}\sum_{j=1}^{h} \frac{s_j}{1+s_j\tilde v_m} -  \frac{1}{d}\sum_{i=1}^{d} \frac{t_i}{1+t_i\tilde v_m}\right]$$
\State compute approximate density $\tilde f(x_m) = \pi^{-1} \Im(\tilde s(x_m))$, and integrate numerically to approximate $\Delta_G$
\BState  \textbf{return}  pointwise CDF $(x_m,\Delta_G(x_m))$ of $\dfg(H,G)$
\EndProcedure
\end{algorithmic}
\label{wd_alg}
\end{algorithm}

\begin{algorithm}
\caption{\textsc{Optimal LSS}:  solve integral equation}
\begin{algorithmic}[1]
\Procedure{Optimal LSS Equation}{}
\BState \textbf{begin}
\State restrict to grid $x_t$ within the support $\tilde S$
\If{using collocation method} 
\State use {\sc Spectrode} with accuracy $\ep_1$ to find $\tilde v(y_a)$ on a dense grid $\{y_a\}$
\State let $\{g_l\}$ be the Lagrange basis for linear interpolation on the grid $x_t$
\State compute $\tilde K_{it} = (Kg_i)(x_t) = \int k(x,x_t) g_i(x) dx$ for all $i,t$
\State - do this by discretizing the integral to the dense grid $y_a$
\State - replace any $\infty$ terms in $k$ by $c_1\cdot \max_b k(y_b,x_t)$ for $y_b\neq x_t$
\State solve linear equations $\sum_{i=1}^{I} \gamma_i \tilde K_{it}= -\Delta(x_t)$, $t=1,\ldots,I$ for $\gamma_i$. 
\State set $\tilde g(x_m) = \sum_{i=1}^{I}\gamma_i g_i(x_m)$
\EndIf
\If{using diagonal regularization method}
\State define a matrix $K_{I0}$ as follows:
\State -let $k_{ij} = k(x_i,x_j)$ for $i\neq j$ 
\State -let $k_{ii} = c_1\cdot k(x_i,x_{|i-1|})$, if $i>1$; while $K_{I0}(x_1,x_1) = c_1\cdot k(x_1,x_2)$.
\State let $K_I = K_{I0}+rI_{I}$, where $r$ is a parameter in Table \ref{params}.
 
\State solve the linear system $K_I \tilde g = -\Delta_I$; $\Delta_I =\{\Delta(x_m)\}_m$
\EndIf
\State numerically integrate $\tilde g(x_m)$ to get LSS $\tilde \varphi(x_m)$
\State interpolate LSS linearly between bulk intervals; extrapolate it as a constant outside 
\BState \textbf{return}  $(x_m,\tilde\varphi(x_m)), m=1\ldots,M  \gets$ approximate optimal LSS
\EndProcedure
\end{algorithmic}
\label{optlss_alg}
\end{algorithm}

\subsubsection{Computing $v$ and the support}
First we compute the companion Stieltjes transform $v(x)$ of the ESD $\fg(H)$ on a dense grid $\{x_m\}$ on the real line (see Alg. \ref{optimal_LSS_main_alg}). We use the \textsc{Spectrode} method \citep{dobriban2015efficient}, which produces an approximation $\tilde v(x)$ that depends on a user-specified accuracy parameter $\ep>0$, and converges to $v(x)$ as $\ep\to0$.  In \cite{dobriban2015efficient}, we showed that $\pi^{-1}\Im(\tilde v(x))$ converges to the density $\pi^{-1}\Im(v(x))$ of the ESD. An analogous argument shows that $\tilde v(x)$ converges to $v(x)$. \textsc{Spectrode} also produces a converging approximation to the support $S$ of $\fg(H)$ as a union of closed intervals $\tilde S = \cup_j [\tilde l_j,  \tilde u_j]$, $j=1,\ldots,J$, sorted in increasing order.

There are two cases---below and above the PT---which depend on whether or not $\Delta\in\textnormal{Im}(K)$. As a proxy to this abstract statement, we check if the sample spikes corresponding to $s_j^1$ belong to the support, as in Section \ref{sec:full_pow}. We have shown that $\Delta\notin\textnormal{Im}(K)$ if some sample spikes are outside the ESD. This is the first case that we handle (Alg. \ref{optimal_LSS_above_PT}). Second, if all sample spikes are in the support, we directly attempt to solve a discretized version of the optimal LSS equation (Alg. \ref{optimal_LSS_below_PT}). We were able to solve it with good accuracy in all examples we have seen; but there may be cases where it does not have a solution. This unlikely case is discussed after the two main cases. 

\subsubsection{Above the PT}
\label{sec:above_pt}
From $\tilde v(x)$ and the support, we check if there are any spikes above the phase transition by verifying if any \emph{sample spike} $\psi(s^1_j)$ falls outside the support: $\smash{\psi(s^1_j) \notin \tilde S}$ for any $j$.  Recall here that $\psi$ is the spike forward map from Section \ref{sec:full_pow}, and equals $\smash{\psi(s) = s[1+\gamma d^{-1}\sum_{i=1}^dt_i/(s-t_i)]}$, see \cite[Ch. 11, Eq. 11.15]{yao2015large}. If there are spikes above the PT, then we follow the steps in Alg. \ref{optimal_LSS_above_PT}. By Thm \ref{thm:lss_part2}, the asymptotic power is unity, and any smooth function $\varphi$ that equals unity in a small neighborhood of the sample spike, and zero on $\tilde S$, is an approximate optimal LSS. 

For good finite sample performance, for the LSS we take a small Epanechnikov kernel centered at each sample spike $\psi(s_j^1)$, and zero elsewhere. Since the fluctuations of the spikes are asymptotically normal above the phase transition, we choose the width of the kernel as $n_{SD}\cdot n^{-1/2}\hsigma_j$. Here $n_{SD}$ is a constant given in Table \ref{params}, $n$ is the sample size, and $\hsigma_j$ is the asymptotic standard deviation of the sample spike, $\hsigma_j^2 = 2[s^1_j]^2\psi'(s^1_j)$; see \cite{yao2015large} Thm 11.11, and also \cite{onatski2012asymptotics} Thm 2 for closely related earlier results. Moreover, we extend the LSS as a constant equal to unity in the direction pointing away from the support $S$, for any \emph{extremal} spikes that fall above $\max(S)$, or below $\min(S)$. If the optional input $n$ is not provided, we set $n =  (d+h)/\gamma$, which is equivalent to assuming that $p=d+h$. 

We noticed a drop in the power of this method right above the PT threshold. This finite sample effect is due to the overestimation of the variability of the top eigenvalue. We use a formula predicting an order $n^{-1/2}$ fluctuation, however, below the PT the fluctuation is of order $n^{-2/3}$ (e.g., \cite{baik2005phase} show a special case of this), which is an order of magnitude smaller. We are not aware of better approximations to the variability in the spike near the PT. 

To overcome this challenge, we heuristically use the optimal LSS corresponding to a spike $s_{-}$ right below the PT threshold $a_{PT}$, even when testing for a spike $s_j^1$ slightly above the PT, in a certain interval $[\tilde a_{PT},s_{+}]$ (see Table \ref{params}). This is how we performed our MC experiments, and the results were satisfactory. In principle, all edges could be adjusted similarly.

We approximate the uppermost PT threshold by $\tilde a_{PT} = -1/\tilde v(\tilde u_J)$. The true PT threshold is at $a_{PT} = -1/v(u_J)$. Theorem 2.7 of \cite{benaych2011eigenvalues} and its proof is an equivalent statement; \cite{nadler2008finite} in Sections 5.2-5.3 also basically establishes the same; and finally Theorem 11.3 of \cite{yao2015large} (quoting \cite{bai2012sample}) also shows the same. 

\subsubsection{Below the PT}

If there are no spikes above the PT, we proceed to solve the optimal LSS equation (see Alg. \ref{optimal_LSS_below_PT}). The LSS is well-defined only within the support $S$ of the bulk $\fg(H)$, so we restrict to that subset of the grid. First, the kernel $k$ is evaluated pointwise using $\tilde v$. 

Next, we compute the difference $\Delta$ of the weak derivatives (Alg. \ref{wd_alg}). As explained in \cite{dobriban2015efficient}, $v'(z)$ can be expressed in closed form as a function of $v(z)$. Hence, using Eq.~\eqref{weak_st} we can approximate the Stieltjes transforms of $\dfg(H,G_i)$.  We find their density from the inversion formula for Stieltjes transforms, and their distribution by integrating the density numerically. 

Finally, we need to solve the optimal LSS equation $Kg = -\eta \Delta$ (where we set the constant $\eta$ to 1 without loss of generality), see Alg. \ref{optlss_alg}. This is a Fredholm integral equation of the first kind with a logarithmically weakly singular kernel, and there are many methods for solving such equations numerically \citep[see][]{kress2013linear}.  We implement two methods: A fast heuristic diagonal regularization method, and a slower but potentially more accurate collocation method. %(abbreviated {\sc CO})
%(abbreviated {\sc DR}). 

First, our \emph{collocation} method---see \citep[Ch. 13]{kress2013linear} for a reference---expresses the potential solution as a linear combination $\smash{g = \sum_{i=1}^{I} \gamma_i g_i}$, for given basis functions $g_i$, and unknowns $\gamma_i$, and solves the system of linear equations resulting from the pointwise evaluation of the integral equation on a grid $x_t$, i.e., $\smash{\sum_{i=1}^{I} \gamma_i (Kg_i)(x_t) = -\Delta(x_t)}$, $t=1,\ldots,I$.  The grid $x_t$ is taken from the output of \textsc{Spectrode}. We choose $g_i$ as the Lagrange basis for linear interpolation on the grid $x_t$ \citep[Ch. 11]{kress2013linear}, because this reduces the length of intervals where we need to integrate.  

To evaluate these integrals, we use {\sc Spectrode} again---now with a higher accuracy---to approximate $v$ on a denser grid $y_a$. We then discretize each integral $(Kg_i)(x_t)= \int k(x,x_t) g_i(x) dx$ into the grid $y_a$, using the explicit formulas for $k$ in terms of $v$, and the explicit form of the Lagrange basis. If there are any elements of $y_a$ that coincide with $x_t$, then the kernel has a singularity, $k(y_a,x_t)=\infty$. We resolve this by replacing $k(y_a,x_t)$ by $c_1\cdot\max_b k(y_{b},x_t)$, for $y_b\neq x_t$, where $c_1$ is a parameter specified in Table \ref{params}.

%The integrands in $(Kg_i)(x_t) = \int k(x,x_t) g_i(x) dx$ can have singularities, but only on the diagonal, where $x=x_t$. Therefore we can easily avoid singularities by the classical method of splitting the domain of integration into two subintervals $x\le x_t$, $x\ge x_t$, knowing that both integrals are finite (as $k$ is only logarithmically singular, and $g_i\in L^2(\I)$). 

This algorithm is empirically stable, and leads to accurate solutions in a few minutes on a desktop computer---see the experiments in the next section. There are theoretical convergence proofs for closely related versions of the collocation method  \citep[Ch. 13]{kress2013linear}. However, verifying their conditions requires work that would take us too far from our current goals. %For our purposes, the present method is fully adequate, and a separate project by the author is underway to develop even more efficient methods. %For instance, applying Theorem 13.22 \citep[Ch. 13]{kress2013linear} for a collocation method with 

Second, in the \emph{diagonal regularization} method we discretize the optimal LSS equation by pointwise evaluation on $x_t$, replacing the singularities $k(x_t,x_t)=+\infty$ heuristically (see Table \ref{params}). First we compute an initial matrix $K_{I0}$, with $K_{I0}(x_i,x_j) = k(x_i,x_j)$ if $i\neq j$, and $K_{I0}(x_i,x_i) = c_1\cdot k(x_i,x_{|i-1|})$, if $i>1$; while $K_{I0}(x_1,x_1) = c_1\cdot k(x_1,x_2)$. Then we regularize $K_I = K_{I0}+rI_{I}$, where $r$ is a function of the trace of $K_{I0}$, as explained in Table \ref{params}. Finally we solve the pointwise equation $K_Ig=-\Delta_I$. This method is faster, while maintaining good empirical accuracy compared to the collocation method (see the next section). %We have used this method in our simulations. 
However, there are fewer numerical convergence guarantees  for such discretization methods.

Our theory only specifies the LSS within $S$, and there is some latitude in extending it outside. We interpolate linearly between the bulk components, and extend it by continuity to a constant in the two outermost regions. Smoother extrapolations may be possible, especially as the LSS can have sharp asymptotes at the edges. We leave such improvements to future research.

\subsubsection{Potential pathological cases}

Our results from Section \ref{olss}---such as Thm \ref{full_pow}---do not exclude that in some pathological cases the spikes $s_j^1$ are below the PT, but $\Delta\notin\textnormal{Im}(K)$. The optimal LSS equation would not solvable in such a case. 

However, we find this possibility unlikely; and we have not seen evidence for it. It would mean that the asymptotic power of the optimal LSS is unity even though the spikes do not separate from the bulk.  Instead, it is more likely that the one-to-one correspondence between the two characterizations of PT---in terms of $K$ and spikes $s_j^1$---will be proved in the future. Therefore we do not devise a special method for this case. 

\subsubsection{Unit tests}

\begin{figure}
\centering
\begin{minipage}{.5\textwidth}
  \centering
  \includegraphics[scale=0.32]
  {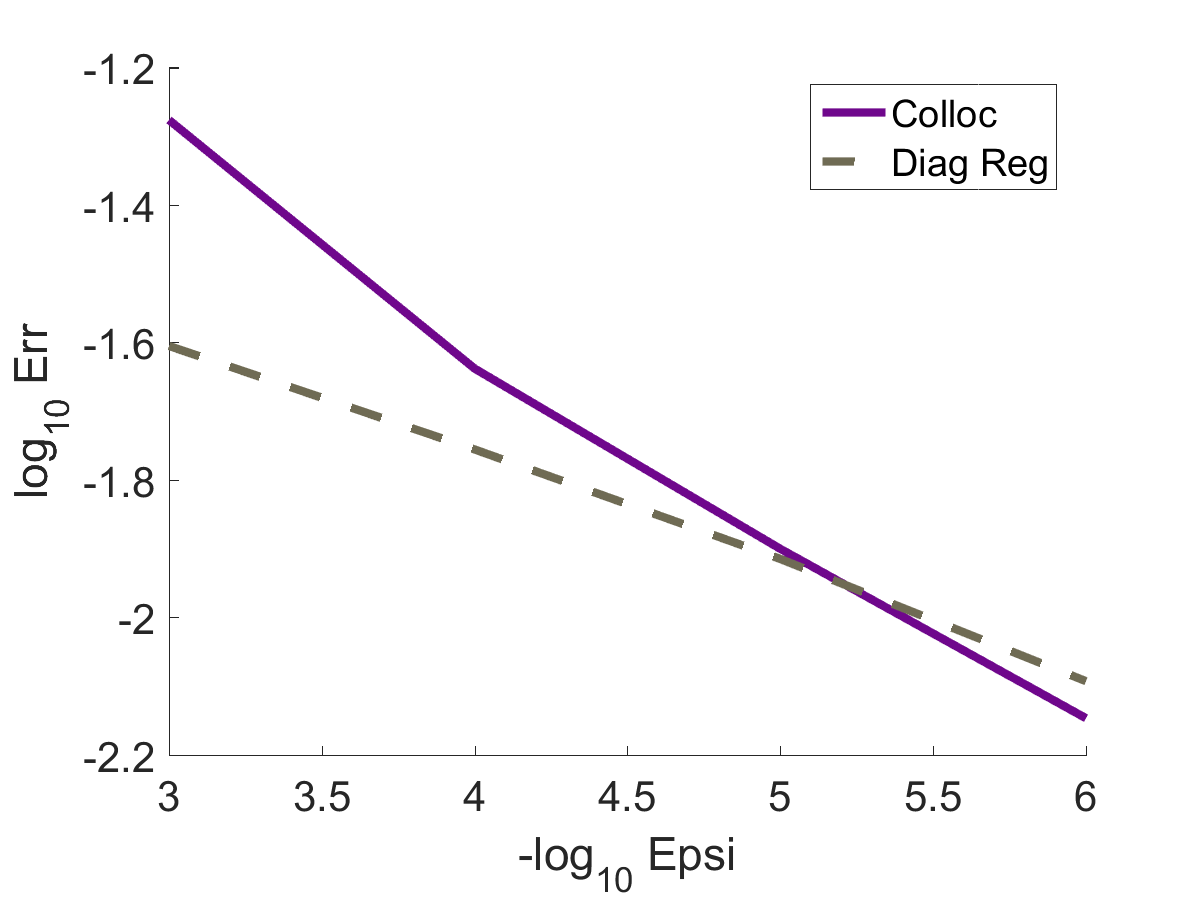}
\end{minipage}%
\begin{minipage}{.5\textwidth}
  \centering
  \includegraphics[scale=0.32]{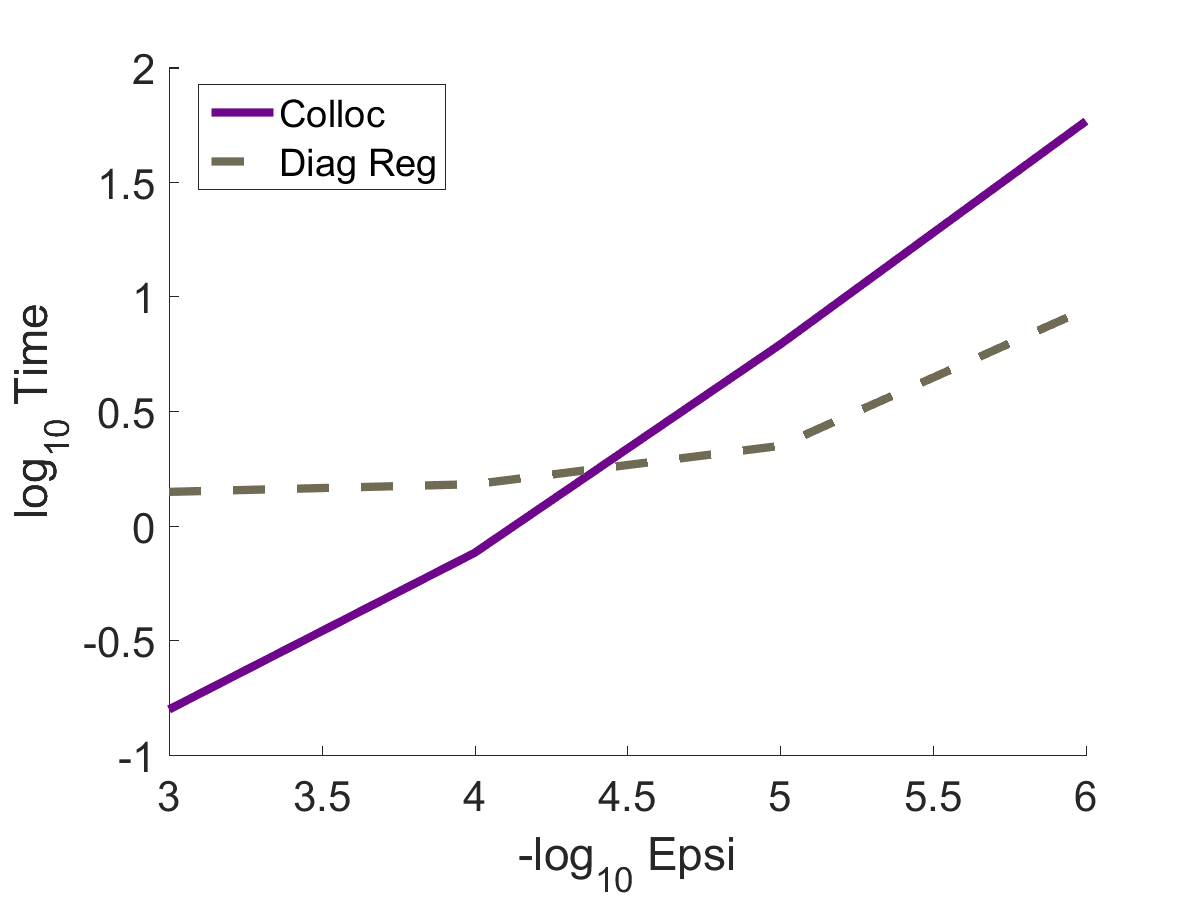}
\end{minipage}
\caption{Performance of collocation and diagonal regularization method, as a function of precision control parameter $\varepsilon$. Left: Numerical precision ($\log_{10}$ Mean Absolute Deviation from OMH LSS).  Right: Running time ($\log_{10}$ seconds). }
\label{test_err_time}
\end{figure}

\begin{figure}
\centering
\begin{minipage}{.5\textwidth}
  \centering
  \includegraphics[scale=0.32]
  {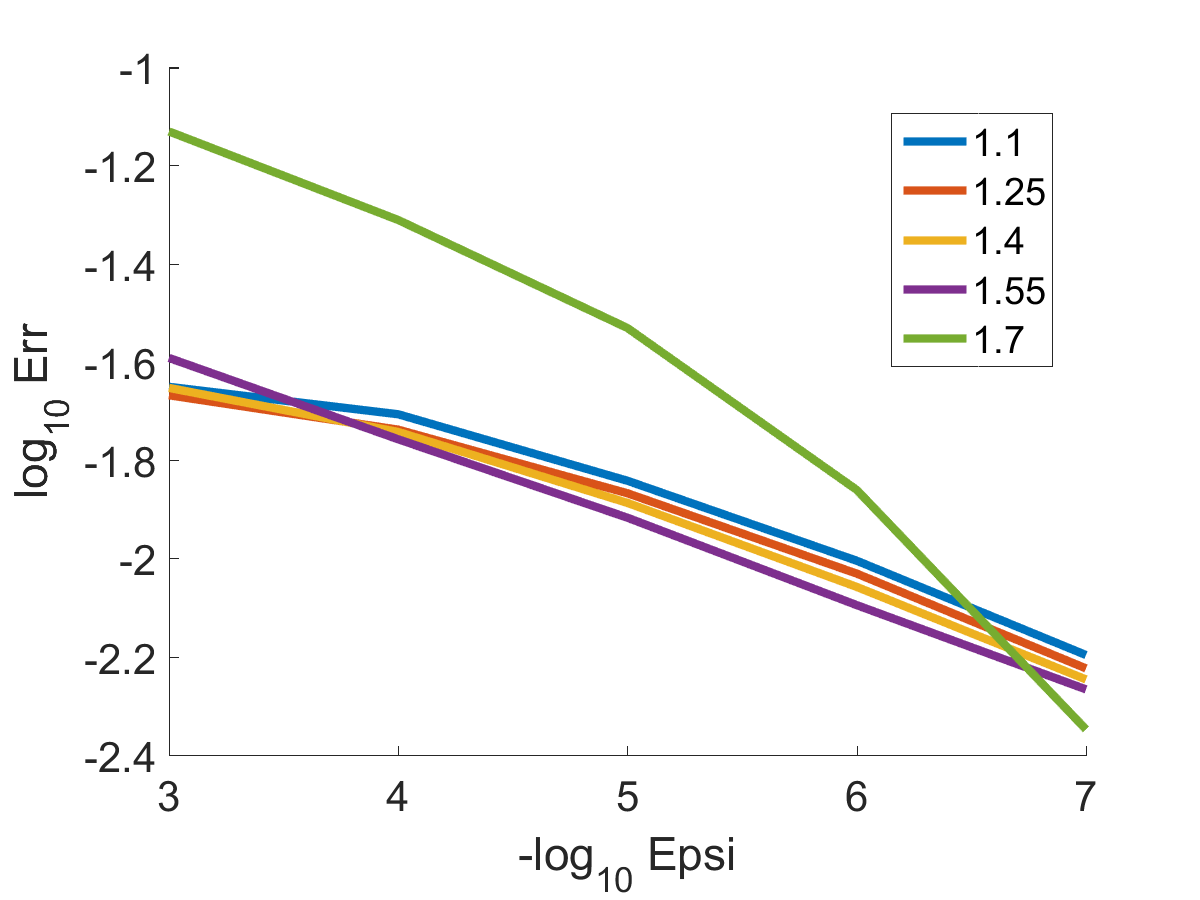}
\end{minipage}%
\begin{minipage}{.5\textwidth}
  \centering
  \includegraphics[scale=0.32]{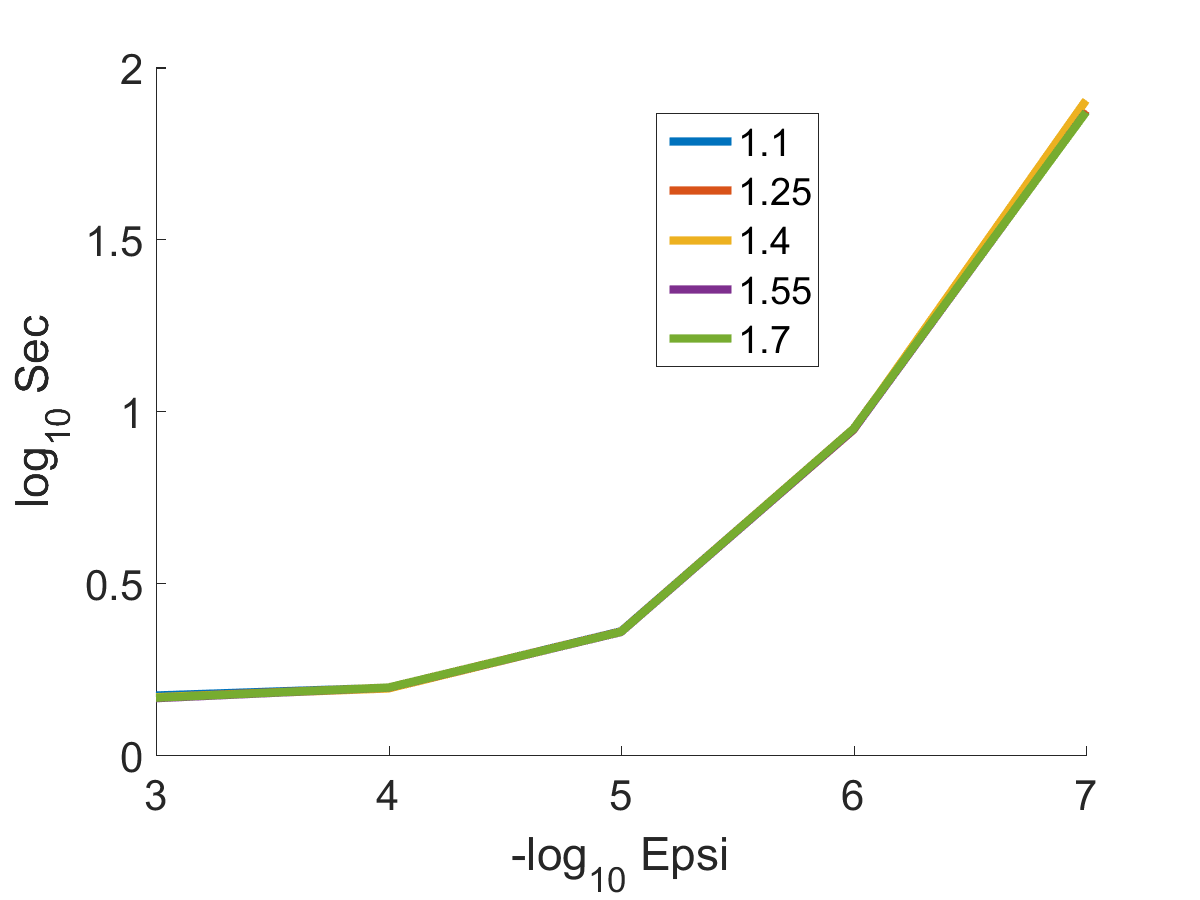}
\end{minipage}
\caption{Performance of diagonal regularization method, as a function of precision control parameter $\varepsilon$, for different spikes. Left: Numerical precision ($\log_{10}$ Mean Absolute Deviation from OMH LSS).  Right: Running time ($\log_{10}$ seconds). }
\label{test_dreg_spikes}
\end{figure}

To show the performance of our methods, we report the results of unit tests and timing experiments. In each test we compute the optimal LSS using the methods described in the previous sections. We use the test problem where $H=G_0=\delta_1$, $G_t=\delta_t$ for some $t$, and $\gamma=1/2$. We compare our results against the gold standard OMH LSS.

First, on Fig. \ref{test_err_time}, we compare the performance of the collocation and diagonal regularization method as a function of the precision control parameter $\varepsilon$. We take the spike $t=1.2$, varying $\ep$ on a grid such that $-\log_{10}\ep = 3,\ldots,6$, and record the precision and running time of the methods. For precision, we use the Mean Absolute Deviation (MAD) from the OMH LSS on the grid returned by the methods. 
Specifically if $(x_m,\tilde \varphi(x_m))$, $m=1,\ldots,M$ are the grid points and LSS values returned by a method for precision $\ep$, then $err(\ep)  = -\log_{10}(MAD(\ep))$, where $MAD(\ep) = mean(|\varphi(x_m)-f(x_m,t)|)$ taken over grid points $x_m$ within the support of the MP law, and $f(x_m,t)$ is the OMH LSS. For the time, we simply record the seconds to completion, tested in MATLAB 2015b on a desktop computer with 8Gb RAM and and 64-bit Intel 3.2Ghz processor. 

On Fig.~\ref{test_err_time} we see that the performance of both methods improves as $\ep$ decreases. The two methods have comparable accuracy. For $\ep = 10^{-6}$, we get approximately 2 significant digits. In \cite{dobriban2015efficient}, we observed that the output of {\sc Spectrode} has approximately as many significant digits of accuracy as its control parameter $\ep$. Therefore, the methods here have significantly lower accuracy. This is expected, however, because there are many processing steps which potentially decrease accuracy. Moreover, the linear integral equation is an ill-posed problem and is expected to decrease accuracy. Therefore the methods have satisfactory performance, but there may be room for improvement.

We also see on Fig. \ref{test_err_time} that for the highest accuracy, diagonal regularization is faster by an order of magnitude than collocation, while achieving comparable accuracy. For $\ep=10^{-6}$, computation takes cca 10 sec. For this reason, we use diagonal regularization as the default method. 

To gain a better understanding of the performance of diagonal regularization, we repeat this experiment varying the spike $t$. The accuracy is comparable across all values of the spike away from the PT, but it is lower near the PT $1+\sqrt{\gamma} =1.714\ldots$ (Fig. \ref{test_dreg_spikes}, left). Meanwhile, the running times are nearly the same (Fig. \ref{test_dreg_spikes}, right). 

We conclude that the two methods are fast and accurate, but diagonal regularization is somewhat more efficient for high accuracy computations. Notably, it has lower accuracy near the PT.

\subsection{Extension to unknown null}
\label{unknown_null}

In many cases, the question of scientific interest may be to test for principal components in the data without knowing the null distributions of the PC variances. This is more difficult than our problem, because the null must be estimated from the data. While a complete treatment is beyond our scope, we outline a possible approach below.

We suggest sample-splitting: one can estimate the noise structure from the first random subset of the data, leading to an estimate $\hat{H}_p$ for the spectrum. There are consistent methods for estimating the spectrum, e.g., \cite{karoui2008spectrum}, see \cite{yao2015large}, Ch.~10 for a review. Our methods can then be used to test for PCs in the data, by using $\hat{H}_p$ as a null. Further work is needed to evaluate or improve this informal proposal.

\section{Empirical motivation}
\label{empirical_evidence}

We review empirical evidence suggesting the need for methods that can detect weak PCs in the presence of complex residual noise. This empirical evidence is a main motivation for our methods.  Due to space limitations, we keep the references to a minimum.

\subsection{Genomics}

In genomics, PCA is commonly used to infer population structure from data on densely typed genetic markers. This has a wide range of applications, including correction for confounding in genome-wide association studies (see e.g., \cite{patterson2006population}, which guides our presentation). A standard setup is that $X$ is an $n \times p$ matrix with $X_{ij}$ equal to the number of minor alleles (0,1 or 2) of the $j$-th genetic variant---often a single nucleotide polymorphism (SNP)---in the genome of the $i$-th individual.

It is often a question of interest to detect the presence of multiple distinct subpopulations. Under the null hypothesis of no population substructure, the $n \times n$ population covariance matrix of \emph{indivduals} equals identity; while under the alternative of a small number of distinct populations, it equals a low-rank perturbation of a near-identity matrix, under certain assumptions \citep{patterson2006population}. A potential model for such situations is $X = (I_n + AA^\top)^{1/2}Z$, where $Z$ is a matrix whose columns have identity covariance, and $A$ is an $n \times k$ low-rank matrix. In this case, the population covariance of individuals is $\Gamma = I_n + AA^\top$, which equals $I_n$ if there is no structure. Based on the proposal of \citep{patterson2006population}, it is common to use the empirical eigenvalues of the sample counterpart $\hGamma = p^{-1} X X^\top$ to test for the existence of population structure using the standard Tracy-Widom test.

However, it is well known that genetic variants close to each other on the chromosome are correlated in the population  due to linkage disequilibrium (LD); this is acknowledged in \cite{patterson2006population}.  Therefore the population covariance matrix of \emph{variants} (SNPs) is non-identity even without population structure. Continuing with our model, one may write $\smash{X = (I_n + AA^\top)^{1/2}}$ $\smash{Z\Sigma_p^{1/2}}$, where $\smash{\Sigma_p}$ is the covariance matrix of the SNPs. The correlations due to $\Sigma_p$ may show up in the spectrum of $\smash{\hGamma}$, leading to a weaker approximation by the standard Marchenko-Pastur null.

This is not the only possible source of non-identity covariance. For instance, departures from the standard Marchenko-Pastur distribution have been observed empirically by \cite{bryc2013separation}, by analyzing data from the International HapMap Project.  After removing what appeared to be significant axes of variation, they observed an empirical bulk that had a long right tail, and was possibly multimodal; unlike the standard Marchenko-Pastur distribution (see their Fig. 3). They attributed these departures to complex substructure and relationships among individuals. 

Similarly, \cite{Kumar2015limitations} showed that the eigenvalues of the Framingham Heart Study dataset (49,214 SNPs in 2,698 unrelated individuals) are highly skewed, with many small eigenvalues (see their Fig. 3). The condition number of the data matrix is cca $10^{10}$. They attribute this to genetic stratification in the sample, and show its importance for estimating heritability.

As an approach for dealing with problem,  \cite{patterson2006population} proposed to correct for LD by either ``LD pruning'', i.e., removing SNPs from pairs above a correlation threshold; or by local regression of SNPs on their neighbors. However, these steps may induce additional variability and arbitrariness in the data analysis. For instance, local regression may remove correlations among the SNPs, but it may also reduce the correlations with the outcome, leading to an undesired loss of power. While such steps may sometimes work well, we are not aware of any \emph{general} correctness guarantees. 

Correlations among the SNPs, as well as complex population substructure, lead to departures from the standard Marchenko-Pastur null.  This motivates us to develop methods that detect PCs beyond the null of identity. The ability to test hypotheses that allow for correlations could lead to better methods for inference of population structure in the presence of LD or stratification. While this clearly requires more methodology development, we think that our work is a necessary step in that direction.

\subsection{Finance}

In finance, the sample covariance matrix is of interest in several problems, such as Markowitz portfolio optimization and factor analysis (see e.g., \cite{bouchaud2009financial}, for a recent review). For many financial data sets, it has been observed that the bulk of the eigenvalue distribution of the sample covariance differs from the standard Marhenko-Pastur distribution. 

For instance \cite{bouchaud2009financial} analyzed U.S. stock market data from the top 500 most liquid stocks in 1000 day periods from 1993 to 2008. They observed that the empirical eigenvalue distribution of the correlation matrix of the stocks has a long right-hand tail, and found that a power-law model for the spectrum gives a good fit (see Sec. 5A and Fig. 2 in \cite{bouchaud2009financial}). Specifically, they found that the Marchenko-Pastur map (Section \ref{olss}) of a power law density for the population eigenvalues $\rho(\lambda) \propto (\lambda-\lambda_0)^{-1-\mu}I(\lambda>\lambda_1)$, with $\mu,\lambda_i>0$, leads to a good empirical fit. They interpreted this as a model for the coexistence of larger and smaller sectors of activity. Similarly, \cite{zumbach2011empirical} analyzed three financial and economic data sets, and found that empirical spectral densities of the form $\rho(\lambda) \propto \lambda^{-1}$ were a good fit for covariance matrices (see his Sec 7. and Fig. 6).

This implies that we need signal detection methods that can account for complex noise structure in the bulk of the spectrum. Methods that assume white noise may be inefficient when the noise structure is non-white, and may lead to incorrect inferences. Our work is a step toward developing such a methodology. 

\subsection{Data Example}

\begin{figure}
\centering
\begin{minipage}{.5\textwidth}
  \centering
  \includegraphics[scale=0.32]
  {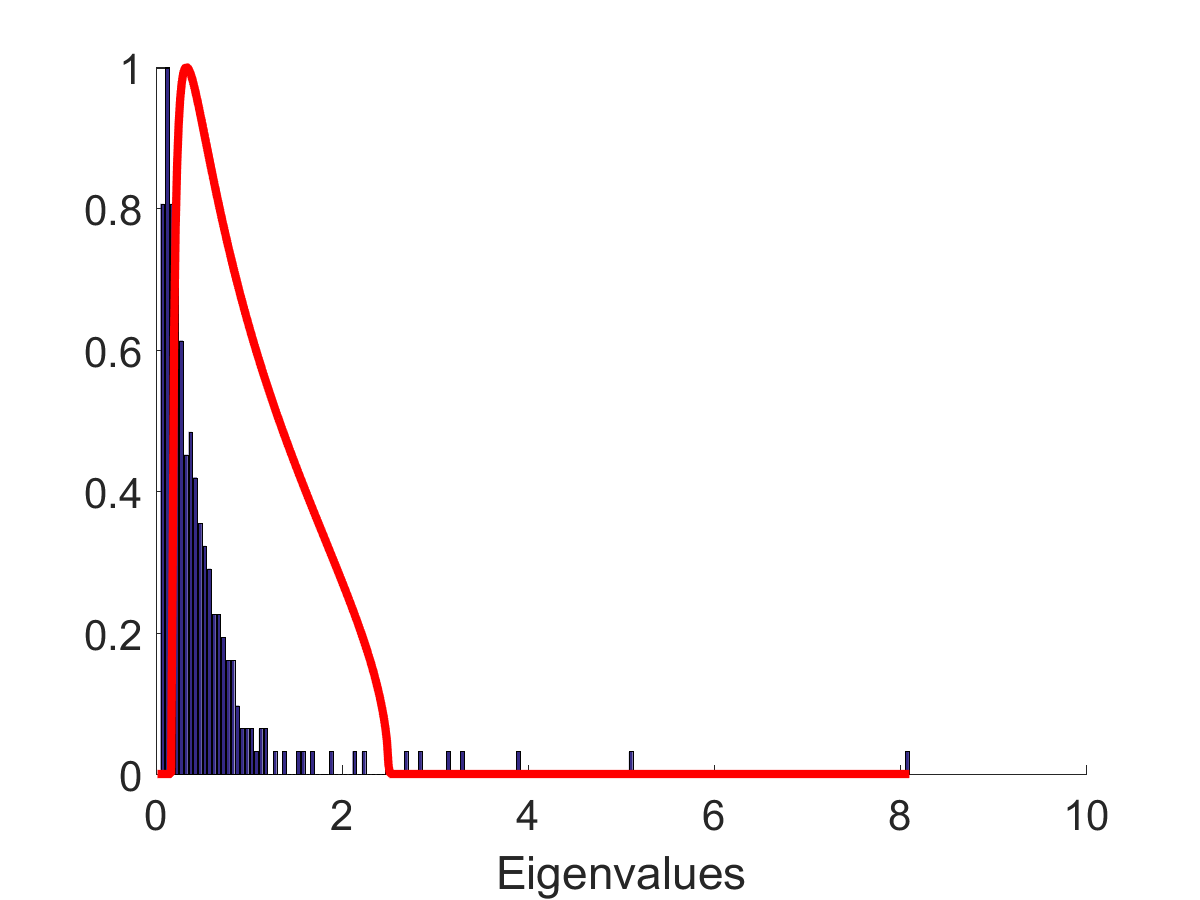}
\end{minipage}%
\begin{minipage}{.5\textwidth}
  \centering
  \includegraphics[scale=0.32]{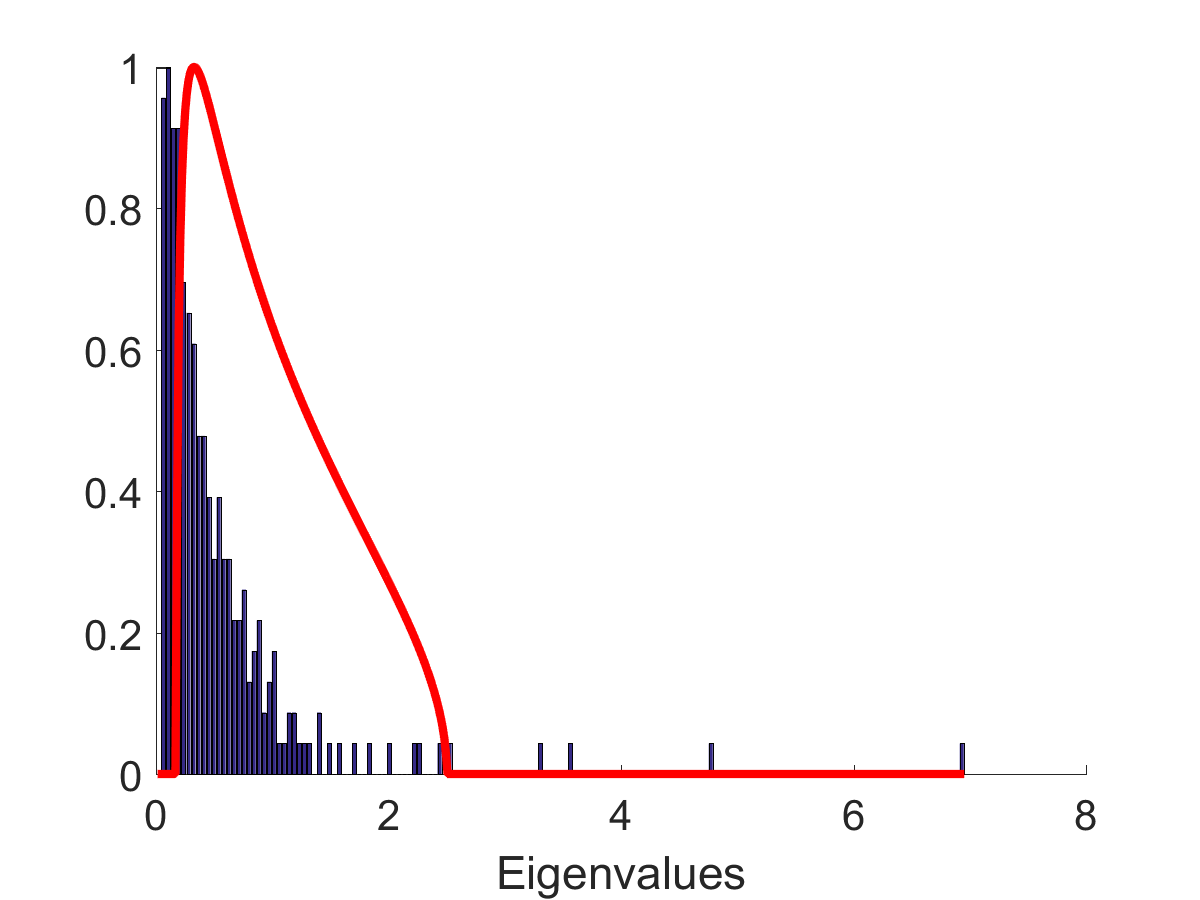}
\end{minipage}
\caption{Histogram of eigenvalues smaller than 10 of covariance matrix (left) and correlation matrix (right) of phoneme data. Superimposed is the Marchenko-Pastur density.}
\label{data}
\end{figure}

As a further motivating example, we show the eigenvalues of a dataset on phonemes, which was previously analyzed by \cite{johnstone2001distribution}, and originally presented by \cite{hastie1995penalized}. We choose this somewhat old dataset because it is a standard example in the field, and in addition to \cite{johnstone2001distribution} it is also used as an illustration in \cite{baik2005phase,yao2015large}. 

The dataset consists of log-periodograms of length $p=256$ of $n=757$ instances of the spoken phoneme ``dcl'' (as in ``dark''). A subset of $n=162$ observations were presented in \cite{hastie1995penalized}, however the full data set available at \href{statweb.stanford.edu/~tibs/ElemStatLearn/datasets/phoneme.data}{statweb.stanford.edu/~tibs/ElemStatLearn/datasets} is larger.  \cite{johnstone2001distribution} analyzed the smaller dataset and observed that the standard Marchenko-Pastur null density $f_\gamma(x) = \sigma\sqrt{(\gamma_+-x/\sigma)(x/\sigma-\gamma_-)}/(2\pi\gamma x)$ for $x\in[\gamma_-,\gamma_+]$, $\gamma_\pm = (1\pm\sqrt{\gamma})^2$, $\gamma = p/n$, provides a good fit to the bulk of the sample covariance matrix. In that analysis the largest 12 eigenvalues are significant according to a Tracy-Widom test. The noise level $\sigma$ is estimated as the mean of the eigenvalues. 

We show the histogram of eigenvalues of the sample covariance matrix on the left plot of Figure \ref{data}. We normalize the eigenvalues to have unit mean, and we also plot the Marchenko-Pastur density with $\sigma=1$ and $\gamma=p/n$. For display purposes, we omit 2 eigenvalues larger than 10. We see that the Marchenko-Pastur density is not a good fit to the bulk. Rescaling the sample covariance matrix does not seem give a better fit. At least 20\% of the eigenvalues are usually outside the bulk (See Section \ref{sec:scale}). The eigenvalues of the correlation matrix (right plot) do not seem to fit the Marchenko-Pastur law either. 

Moreover, in this example the Marchenko-Pastur bulk is a good model for a small subset of the data, but it is not so good for the whole dataset. These examples reinforce the need to have models going beyond the Marchenko-Pastur bulk, and provide further motivation for our theory. 

\subsection{Scaling the covariance matrix}
\label{sec:scale}
On Fig. \ref{fig:scale} we show histograms of the eigenvalues smaller than 10 of the covariance matrix of the phoneme data, scaled by various $\sigma$. We first normalize the eigenvalues to have unit mean, and then multiply them by $\sigma$ on a uniform grid on $[0.5,5]$. These are displayed moving from the top left image to the right, and continuing in the lower rows. Superimposed is the Marchenko-Pastur (MP) density. 

We observe that for most scaling parameters the MP density does not fit well. The best fit seems to be for the figure on the right in the second row, for which $\sigma = 3.2$. However, in this case there are 36 eigenvalues outside of the support of the MP density, even after enlarging the support conservatively, to take into account the fluctuations of order $n^{-1/3}$ of the largest eigenvalue \citep[the results of][imply that this is the right order of fluctuation in the Gaussian case under the null]{johnstone2001distribution}. This number seems too large to be practical, because there are only $p=256$ dimensions. Furthermore, there is no clear gap between ``signal'' and ``noise'' eigenvalues for this $\sigma$, and thus it would be hard to justify its choice. It is reasonable to think of more general models for the bulk, motivating the approach of this paper.

\begin{figure}[p]
\centering
\begin{tabular}{ccc}

\includegraphics[width=\FW, trim = \TRA mm \TRB mm \TRC mm \TRD mm, clip = TRUE]{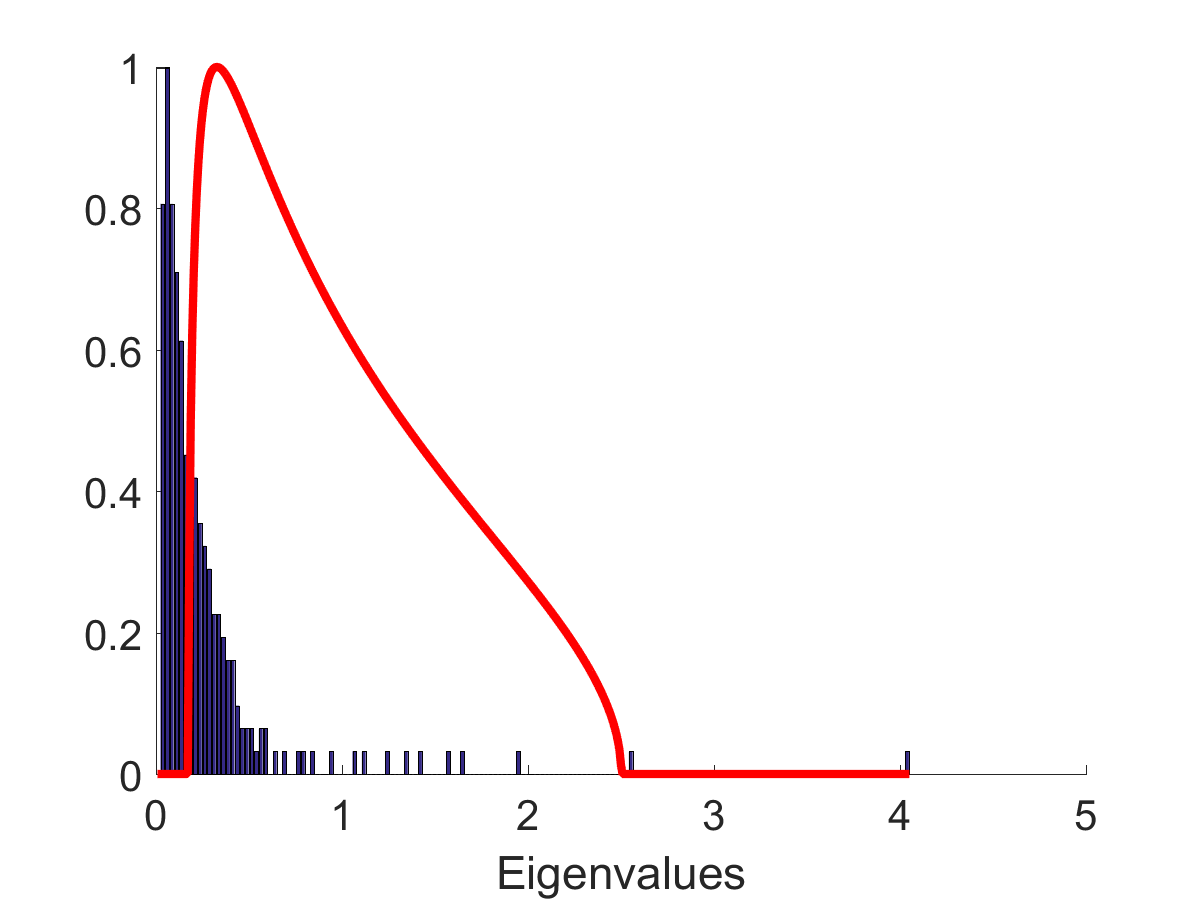} &
\includegraphics[width=\FW, trim = \TRA mm \TRB mm \TRC mm \TRD mm, clip = TRUE]{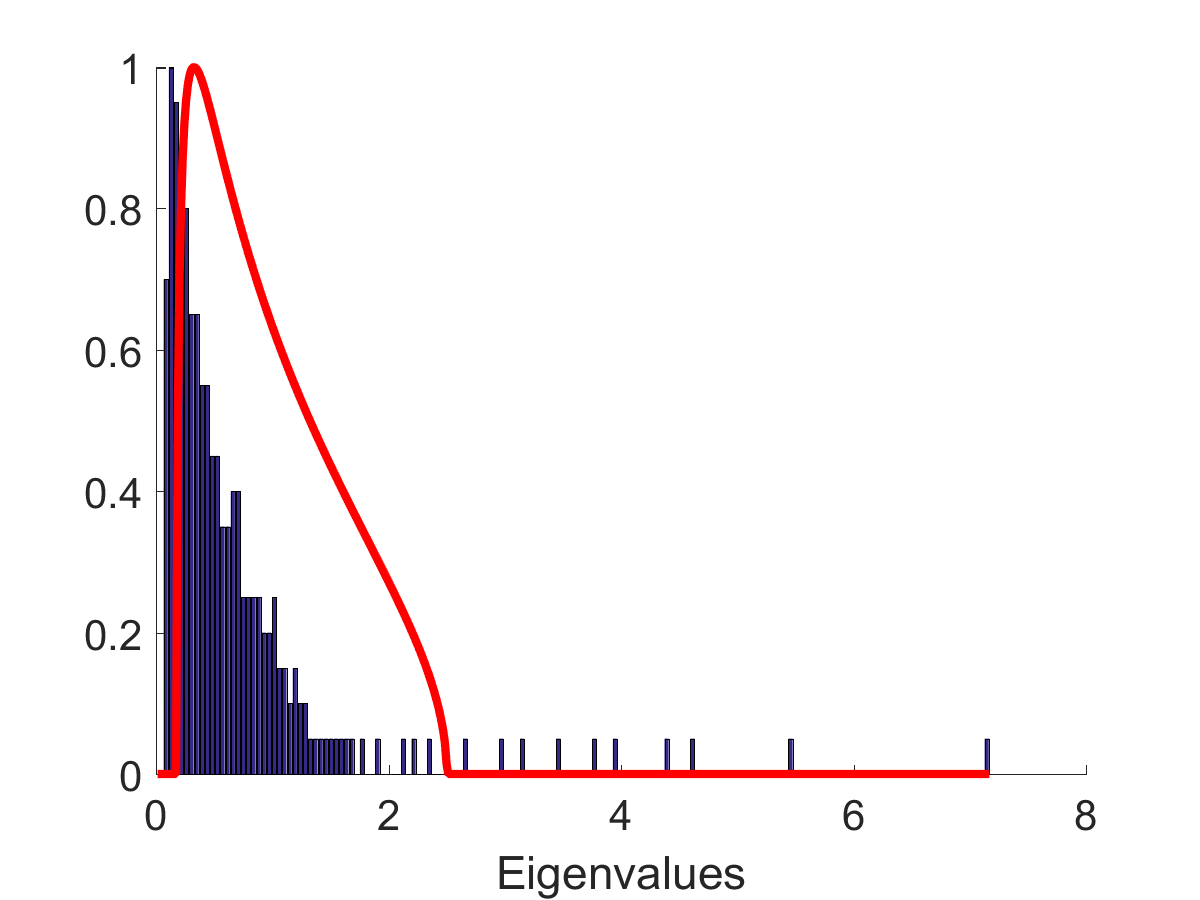} \\
\includegraphics[width=\FW, trim = \TRA mm \TRB mm \TRC mm \TRD mm, clip = TRUE]{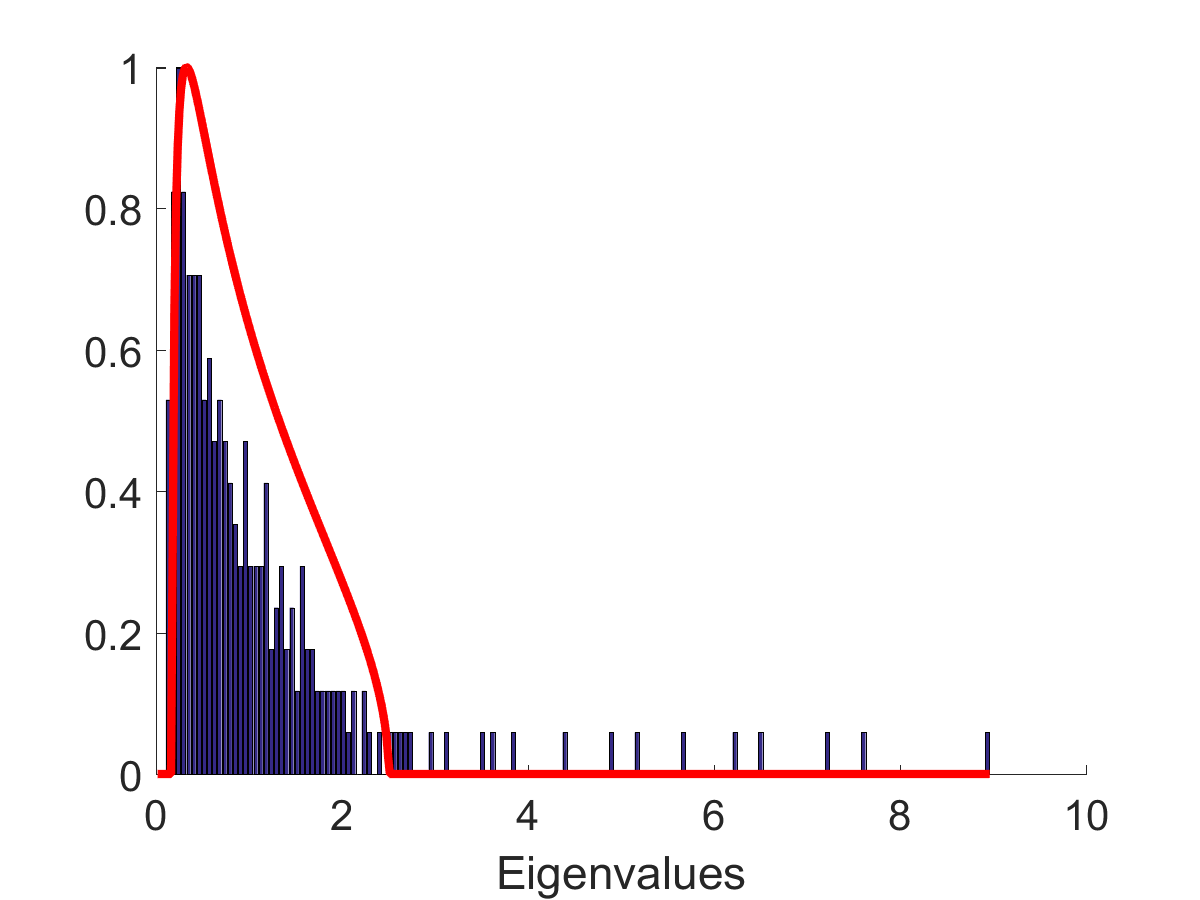}  &
\includegraphics[width=\FW, trim = \TRA mm \TRB mm \TRC mm \TRD mm, clip = TRUE]{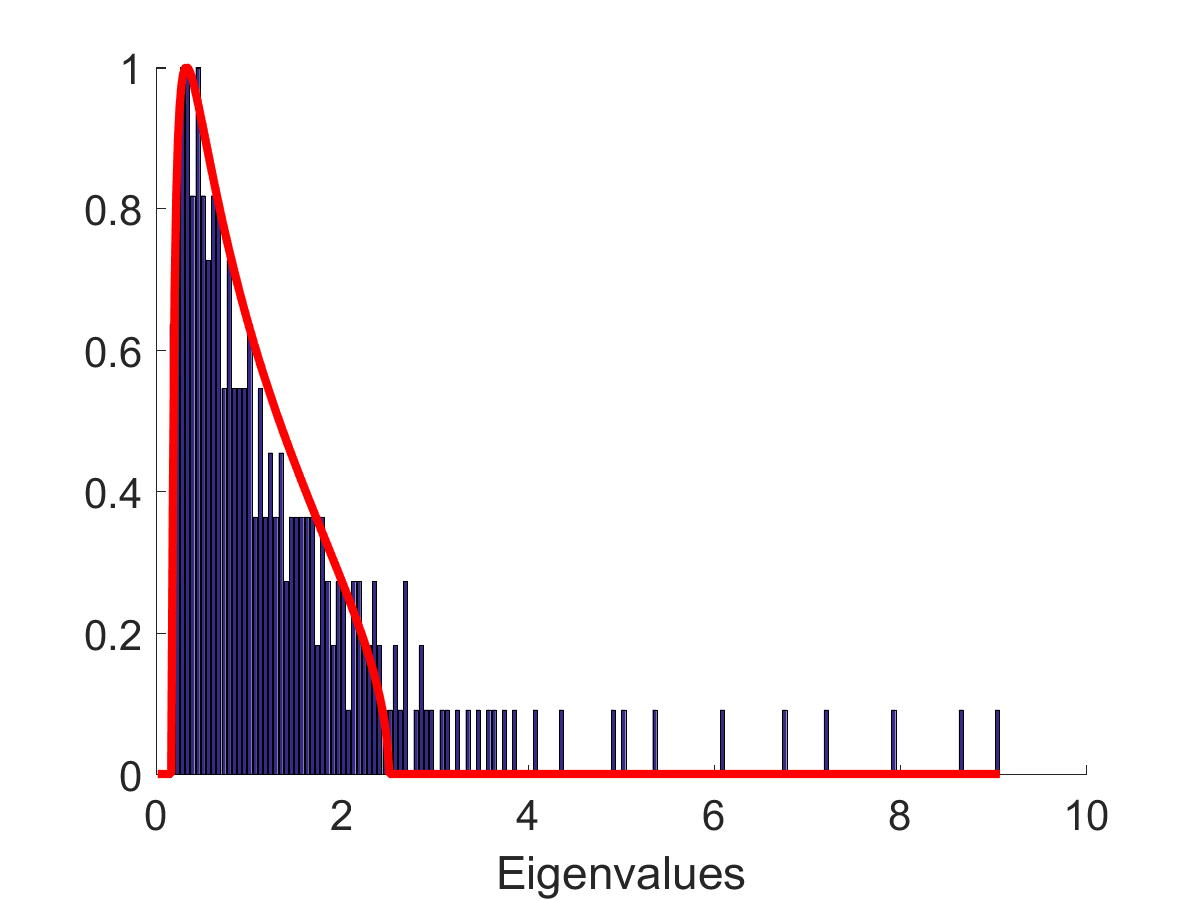}   \\

\includegraphics[width=\FW, trim = \TRA mm \TRB mm \TRC mm \TRD mm, clip = TRUE]{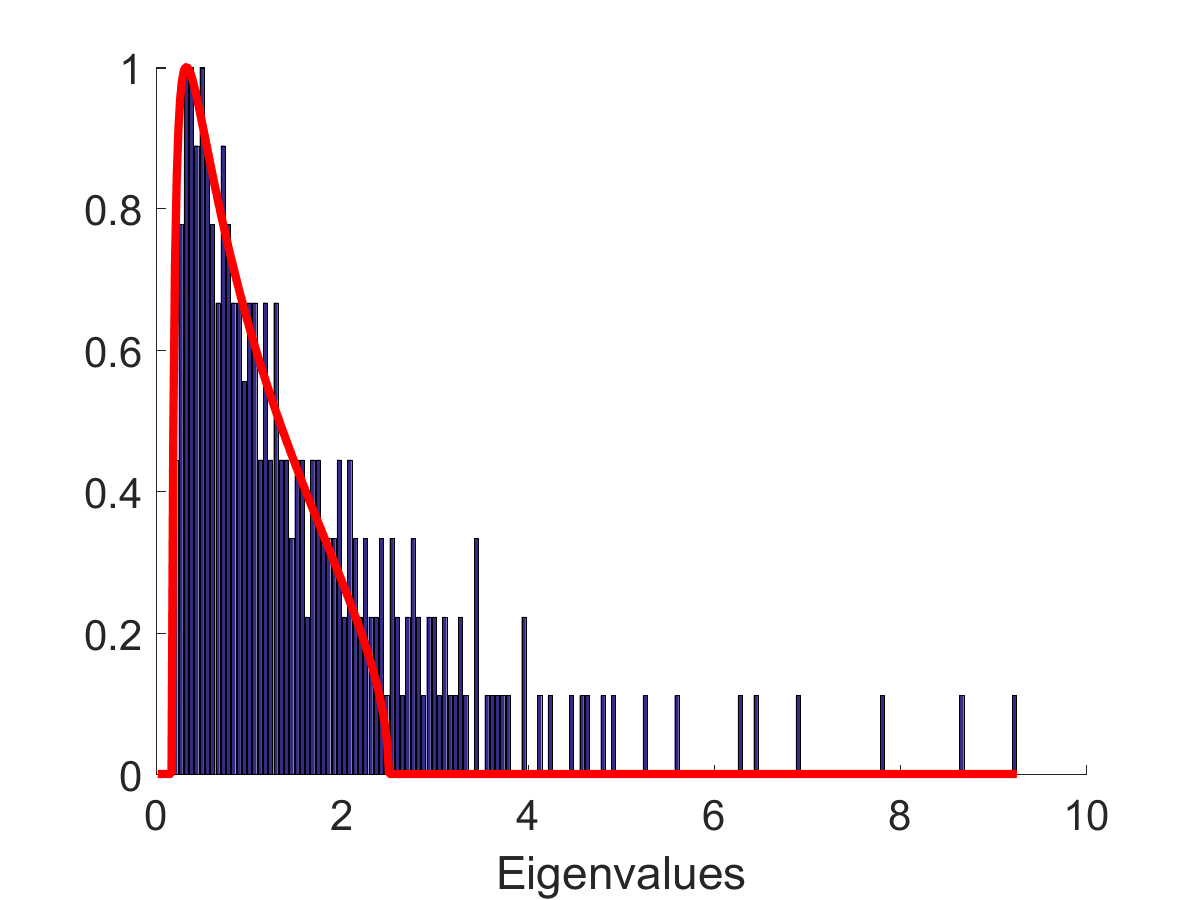} &
\includegraphics[width=\FW, trim = \TRA mm \TRB mm \TRC mm \TRD mm, clip = TRUE]{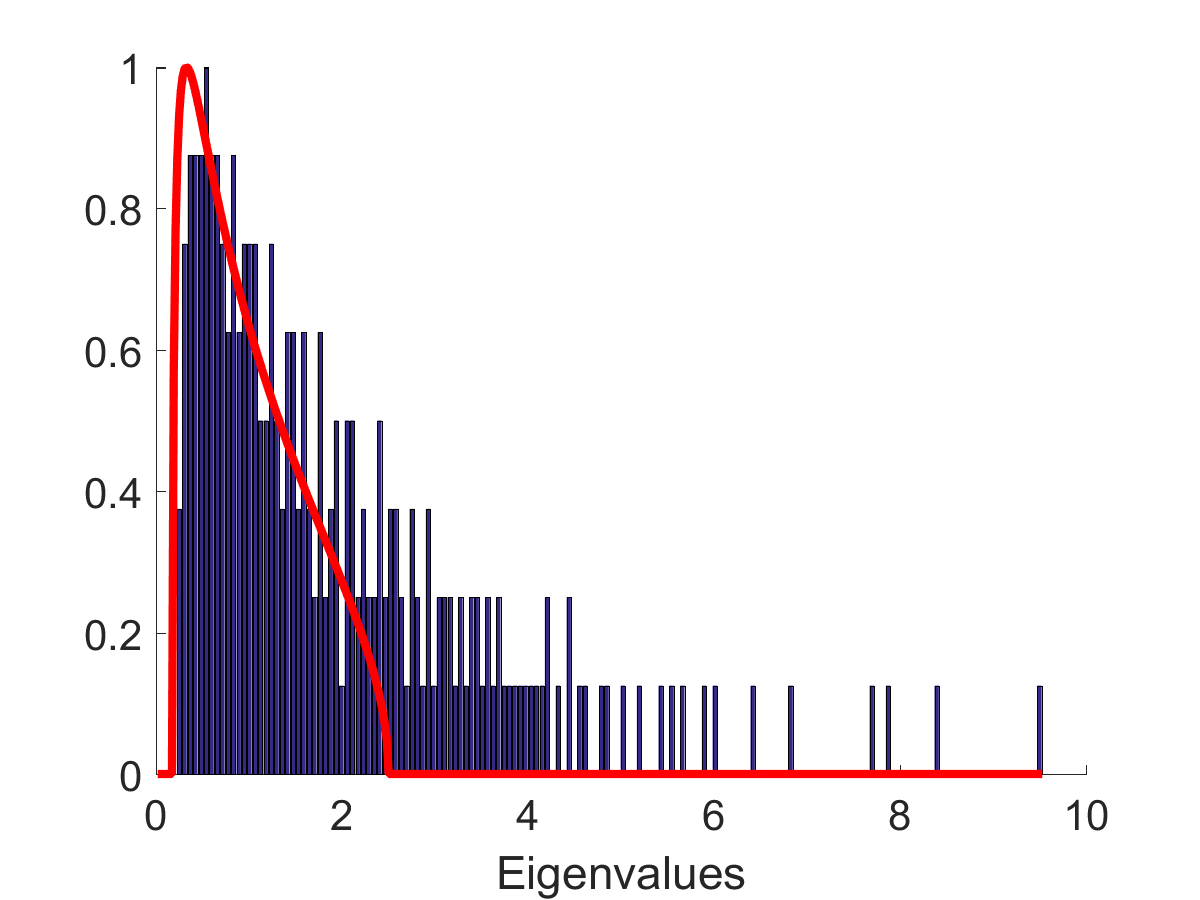}  \\
\end{tabular}
\caption{Histogram of eigenvalues smaller than 10 of the covariance matrix of the phoneme data, scaled by various $\sigma$. We first normalize the eigenvalues to have unit mean, and then multiply them by $\sigma$ on a uniform grid on $[0.5,5]$. The results are displayed moving from the top left image to the right, and continuing in the lower rows. Superimposed is the Marchenko-Pastur density.}
\label{fig:scale}
\end{figure}

\end{document}